\renewcommand{\autoref}{\cref}
\newtheorem{thm}{Theorem}[section]
\newtheorem{cor}[thm]{Corollary}
\newtheorem{lem}[thm]{Lemma}
\newtheorem{defi}[thm]{Definition}
\newtheorem{rem}[thm]{Remark}
\DeclareMathOperator*{\esssup}{ess\,sup}
\DeclareMathOperator*{\essinf}{ess\,inf}
\newcommand{\R}{\mathbb{R}}
\newcommand{\N}{\mathbb{N}}
\newcommand{\V}{\mathbb{V}}
\newcommand{\diam}{\textup{diam}}
\newcommand{\Lip}{\mathcal{L}}
\newcommand{\LipCov}{\mathcal{N}}
\newcommand{\LipDelta}{\sigma}
\newcommand{\LipProp}{\textup{LipProp}}
\newcommand{\calT}{\mathcal{T}}
\newcommand{\calM}{\mathcal{M}}
\newcommand{\calP}{\mathcal{P}}
\newcommand{\calI}{\mathcal{I}}
\newcommand{\standard}{{\textrm{standard}}}
\newcommand{\thesis}[1]{{}}
\begin{document}

\title{Anisotropic approximation on space-time domains}
\author{Pedro Morin\footnote{Universidad Nacional del Litoral and CONICET, 
Departamento de Matem\'atica, 
Facultad de Ingenier\'{\i}a Qu\'{\i}mica, Santiago del Estero 2829, S3000AOM Santa Fe, Argentina. 
Emails: \href{mailto:pmorin@fiq.unl.edu.ar}{pmorin@fiq.unl.edu.ar}}, Cornelia Schneider\footnote{Friedrich-Alexander-Universit\"at Erlangen-N\"urnberg, Angewandte Mathematik III, Cauerstr. 11, 91058 Erlangen, Germany. Email: \href{mailto:schneider@math.fau.de}{schneider@math.fau.de}}
, and Nick Schneider\footnote{Friedrich-Alexander-Universit\"at Erlangen-N\"urnberg, Angewandte Mathematik III, Cauerstr. 11, 91058 Erlangen, Germany. Email: \href{mailto:nick.schneider@fau.de}{nick.schneider@fau.de}} \footnote{corresponding author}}
\maketitle
\begin{abstract}
We investigate anisotropic (piecewise) polynomial approximation of functions in Lebesgue spaces as well as anisotropic Besov spaces. For this purpose we study temporal and spatial moduli of smoothness and their properties. In particular, we prove Jackson- and Whitney-type inequalities on  Lipschitz cylinders, i.e., space-time domains $I\times D$ with a finite interval $I$ and a bounded Lipschitz domain $D\subset \R^d$, $d\in \N$. As an application, we prove a direct estimate result for adaptive space-time finite element approximation in the discontinuous setting.
\\
\noindent{\em Key Words: Jackson inequality; Whitney inequality; anisotropic Besov spaces; anisotropic polynomials; direct estimates}. \\
{\em MSC2020 Math Subject Classifications: Primary 41A10, 41A17, 41A25, 41A63; Secondary 65M50, 65M60}.
\end{abstract}

\tableofcontents

\section{Introduction and main results}\label{sect:Introduction}

The subject of this paper is to study the approximation by anisotropic piecewise polynomials of functions in Lebesgue spaces $L_p(I\times D)$, $0<p<\infty$, on space-time domains $I \times D$ with a finite interval $I$ and a bounded Lipschitz domain $D \subset  \mathbb{R}^d$, $d \in  \mathbb{N}$,  as well as certain anisotropic Besov spaces. In particular, such functions appear as solutions of various time-dependent problems, demonstrating the importance of tackling such questions that arise in many areas of numerical analysis and PDEs. \\
The case of stationary problems and their approximation with piecewise polynomials is well understood: A good review of a variety of results
for the univariate case is given by DeVore \cite{DeV76}, while spline approximation
of multivariate functions has been developed by de Boor and Fix \cite{BF73},
 Munteanu and Schumaker \cite{MS73}, and Dahmen, DeVore, and Scherer \cite{DDS80}. 
Moreover, adaptive finite element methods (AFEM) are often used to numerically approximate the solution to a PDE and  have been experimentally (and theoretically) shown to outperform standard finite element methods for many problems. Regarding  approximation with (adaptive)
finite elements of arbitrary order we refer to the more recent paper by Gaspoz and Morin \cite{GM14} as well as  its forerunner  \cite{BDDP02} by Binev, Dahmen, DeVore, and Petrushev, who consider linear finite elements in two dimensions.\\
However, not much is known in the time-dependent setting. Here, first results were obtained by the authors in \cite{AMS23} and \cite{AGMSS25} for time-stepping finite element methods. 
We now want to pursue our investigations from a different angle and consider adaptive space-time finite element  approximation instead, thus, 
  generalizing the results from \cite{GM14}  to a time-dependent setting. Therefore, we need estimates of Jackson- and Whitney-type\footnote{The terminology for this type of inequalities is not  consistent throughout the literature. What we (and for example the authors of \cite{OS78}) call Jackson's estimate has sometimes also been called Whitney's estimate; e.g. in \cite{DL04}. Here we  follow the jargon of \cite{AMS23}.}  as well as a corresponding definition of anisotropic Besov spaces with respect to approximation with polynomials of fixed order in time and space, respectively. \\
  We found that in the literature there were no corresponding results that were suited for our FEM analysis, since the results for approximation with anisotropic polynomials (in anisotropic Besov spaces) given in Besov, Il'in, and Nikol'ski\u{i} \cite[Ch.~4]{BIN79}, Leisner \cite[Ch.~2-3]{Lei00}  or Hochmuth \cite[Sect.~2-3]{Hoc02} do not allow for the approximation with a fixed polynomial degree in space that  does not vary among the different spatial coordinate axes. Precisely, we wish to consider anisotropic polynomials of the form 
\begin{align*}
    \sum\limits_{i_0<r_1} \ \sum\limits_{i_1+i_2+\dots+i_d<r_2} c_{i_0, \dots, i_d}\,t^{i_0}\prod\limits_{j=1}^d x_j^{i_j} 
\end{align*} 
in contrast to 
\begin{align*}
    \sum\limits_{i_0<r_1} \ \sum\limits^{\infty}_{\max\{i_1,i_2,\dots,i_d\}<r_2} c_{i_0, \dots, i_d}\, t^{i_0} \prod\limits_{j=1}^d x_j^{i_j},
\end{align*}
where the summation indices $i_j$ are all nonnegative and $r_1, r_2\in \N$ are the temporal and spatial polynomial orders, respectively. In particular, for $r_2=2$ the latter approach would not allow for approximation by linear functions in space, but only for approximation with functions that are linear in every spatial direction like multilinear functions. We refer to \cite[Sect.~3.1]{Lei00} for more details on this topic. Further, in the first source \cite{BIN79} only approximation with respect to the Lebesgue norm $\|\cdot\|_{L_p}$ with $p\ge 1$ is considered, whereas in the latter ones \cite{Lei00, Hoc02} the results are always confined to unit cube domains. \\
As a main tool for our studies we now rely on the so-called spatial and temporal moduli of smoothness and their properties. For a long time moduli of smoothness have been known to be a very useful concept. Already in 1927 a version of the inequality for moduli of smoothness of different orders, the so-called Marchaud’s inequality, was proven by Marchaud \cite{Mar27}. In the literature most results for the moduli are established in the Banach setting when 
$1 \le p < \infty$ and are only partially known outside this range. Since we are also interested in $0<p< 1$, for further references and fundamental characteristics of the moduli we refer to DeVore and Lorentz \cite{DL93} as well as  the recent article by Kolomoitsev and  Tikhonov \cite{KT20}, which also covers the full range of $p$ (in the isotropic case). The importance of the moduli of smoothness stems from the fact that they allow to quantitatively measure smoothness of functions via higher order differences instead of derivatives and thus provide more general estimates than the estimates in terms of derivatives. 
In particular,  the  Jackson and Whitney inequalities, two fundamental results in approximation theory  often used in numerical analysis, can be stated and proven with the help of such moduli of smoothness. \\
Jackson’s estimate allows to measure the error of approximation of an arbitrary $L_p$-function by its modulus of smoothness. Additionally, if the function is smooth enough, e.g. has some  Besov regularity, one can even estimate the (global) local error of approximation with respect to (piecewise) polynomials by the Besov \mbox{(quasi-)}seminorm of the function leading to  Whitney's estimate (the proof makes use of the Jackson’s estimate). In this sense Jackson's inequality  is somehow preferable to Whitney's since it applies to any $f$ regardless of its smoothness. Both inequalities provide, in particular, a convergence characterization for a local polynomial approximation when the degree of the polynomials is fixed and the interval under consideration is small.\\
Inequalities of this kind were  first proven by Jackson \cite{Jac12} and Whitney \cite{Whi57} for $p = \infty$ and extended by Brudny\u{i} \cite{Bru64} to $1 \leq  p < \infty$. Several authors have extended and generalized the results in various directions: A multivariate (isotropic) generalization for functions on the unit cube $Q\subset \mathbb{R}^d$ as well as on convex domains was given by Brudny\u{i} \cite{Bru70a, Bru70b}. Moreover, let us mention the recent contributions of  Dekel and Leviatan  \cite{DL04} and Dekel \cite{Dek22} with focus on convex  domains and the improvement of the constants involved. For the case $0 < p < 1$ we refer to the works of Storo$\check{z}$enko \cite{Sto77} as well as Oswald and Storo$\check{z}$enko \cite{OS78}, where the one-dimensional case has been treated. Finally, a Whitney-type theorem for anisotropic polynomial approximation on a $d$-parallelepiped was established in \cite{DU11} by Dung and Ullrich. \\
It is the main aim of this paper to prove the mentioned inequalities for time-dependent functions.  In our proofs, we are building on techniques from the isotropic case as in \cite[Sect.~1-2]{DL04}, DeVore \cite[Sect.~6.1]{DeV98}, and \cite[Sect.~3.2]{AMS23}. Moreover, as a first application,  we  use our new Jackson's and Whitney's inequalities  in order to obtain direct approximation
theorems for discontinuous finite elements. In a forthcoming paper we then want to extend these also to continuous finite elements. Furthermore, we also plan to examine in what sense the estimates   are best possible by establishing   inverse theorems (which are much harder to obtain compared to direct theorems and therefore are out of our current scope). Together, both kinds of theorems will yield a  characterization (or at least an 'almost' characterization, see \cite{GM14}) of the smoothness of a function (measured with the help of anisotropic Besov spaces) in terms of its degree of approximation by adaptive (dis-)continuous  finite elements.

Throughout the paper, we will denote $A\lesssim B$ if there is a constant $c$ independent of $A$ and $B$ such that $A\le cB$. Moreover, we write  $A\gtrsim B$, if $B\lesssim A$ and  $A\sim B$ if $A\lesssim B$ and $A\gtrsim B$ both hold true. We will further add (possible) dependencies of the corresponding constants on parameters as subscripts to $\lesssim$, $\gtrsim$, \mbox{or $\sim$}.

\subsection{Main results}\label{sect:Main results}

Let $I$ be an interval and $D\subset \R^d$, $d\in \N$, a bounded Lipschitz domain, see Definition \ref{Definition of Lipschitz domain}. We define the space of \textbf{anisotropic polynomials of order $(r_1, r_2)\in \N^2$} on $I\times D$ as
\begin{align*}
    \Pi_{t,\bm{x}}^{r_1,r_2}(I\times D):=\Pi_{t}^{r_1}(I) \otimes \Pi_{\bm{x}}^{r_2}(D):= \left\{P:I\times D\rightarrow \R\:\bigg|\: P(t,\bm{x}):= \sum\limits_{i=0}^{r_1-1} \sum\limits_{|\alpha|<r_2} c_{i, \alpha} t^i \bm{x}^\alpha,\,\, c_{i, \alpha}\in\mathbb{R} \right\},
\end{align*}
where we use the common notations $\bm{x}^\alpha := \prod\limits^{d}_{i=1} x_i^{\alpha_i}$ and $|\alpha|:=\sum\limits_{i=1}^{d} \alpha_i$ for $\bm{x}\in D$ and multi-indices $\alpha \in \N_0^d$. Using the \textbf{temporal and spatial moduli of smoothness}  $\omega_{r_1,t}(\cdot, I\times D, \cdot)_p$ and $\omega_{r_2,\bm{x}}(\cdot, I\times D, \cdot)_p$ from Definition \ref{def:moduli_of_smoothness}, respectively, the following time-dependent version of \cite[Thm.~1.4]{DL04} holds true.

\begin{thm}[Jackson-type inequality]\label{thm:Jackson}
For any $p\in (0,\infty]$ and $f\in L_p(I\times D)$, there exists \mbox{$P\in \Pi_{t,\bm{x}}^{r_1,r_2}(I\times D)$} such that
\begin{align}\label{thm:Jackson - equation}
    \|f-P\|_{L_p(I\times D)}\lesssim_{\,d,p,r_1, r_2, \LipProp(D)\,} \omega_{r_1,t}(f, I\times D, |I|)_p + \omega_{r_2,\bm{x}}(f, I\times D, \diam(D))_p,
\end{align}
where $|I|$ is the length of the interval $I$, $\diam(D):=\sup\limits_{x,y\in D}|x-y|$ is the diameter of $D$, and $\LipProp(D)$ denotes the Lipschitz properties of the domain $D$, which are specified below in \cref{Section:Preliminaries}. Further, the dependency on $\LipProp(D)$ can be omitted if $D$ is additionally convex. 
\end{thm}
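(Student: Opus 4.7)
The plan is to prove \cref{thm:Jackson} by a two-step \emph{tensorized} near-best approximation: first approximate $f$ in the spacial variable by a polynomial of order $r_2$ in $\bm{x}$, then approximate the resulting time-dependent coefficients by polynomials of order $r_1$ in $t$. Concretely, I would fix a near-best spacial approximation operator $T_{\bm{x}} : L_p(D) \to \Pi_{\bm{x}}^{r_2}(D)$, which I apply pointwise in $t$ to produce $(T_{\bm{x}} f)(t, \bm{x}) = \sum_{|\alpha|<r_2} c_\alpha(t)\, \bm{x}^\alpha$, and a near-best temporal approximation operator $T_t : L_p(I) \to \Pi_t^{r_1}(I)$, applied pointwise to each coefficient $c_\alpha$. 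The final candidate is
\begin{align*}
P(t,\bm{x}) := (T_t T_{\bm{x}} f)(t,\bm{x}) = \sum_{|\alpha|<r_2} (T_t c_\alpha)(t)\, \bm{x}^\alpha \in \Pi_{t,\bm{x}}^{r_1,r_2}(I\times D).
\end{align*}

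The error is then split as
\begin{align*}
\|f - P\|_{L_p(I\times D)} \lesssim \|f - T_{\bm{x}} f\|_{L_p(I\times D)} + \|T_{\bm{x}} f - T_t T_{\bm{x}} f\|_{L_p(I\times D)}.
\end{align*}
For the first term I would invoke the isotropic spacial Jackson inequality on the Lipschitz domain $D$ (following \cite{DL04} or \cite{Dek22}) applied to $f(t,\cdot)$ slice-by-slice, yielding a bound by $\omega_{r_2,\bm{x}}(f, I\times D, \diam(D))_p$ after integration in $t$; this is where $\LipProp(D)$ enters, and convexity of $D$ removes the dependency. For the second term I would apply the univariate Jackson inequality (e.g., from \cite{DeV98,AMS23} for $0<p<1$, or from \cite{DL93} for $p\ge 1$) slice-by-slice in $\bm{x}$ to the function $T_{\bm{x}} f$, obtaining
\begin{align*}
\|T_{\bm{x}} f - T_t T_{\bm{x}} f\|_{L_p(I\times D)} \lesssim \omega_{r_1,t}(T_{\bm{x}} f, I\times D, |I|)_p.
\end{align*}

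The crucial step is then to transfer the temporal modulus from $T_{\bm{x}} f$ back to $f$. Using that $T_{\bm{x}}$ acts only in the spacial variable, $r_1$th order differences in $t$ commute with $T_{\bm{x}}$, i.e.
\begin{align*}
\Delta_h^{r_1,t}[T_{\bm{x}} f](t, \bm{x}) = T_{\bm{x}}\!\bigl[\Delta_h^{r_1,t} f(t,\cdot)\bigr](\bm{x}),
\end{align*}
and $L_p$-(quasi-)boundedness of $T_{\bm{x}}$ immediately gives $\omega_{r_1,t}(T_{\bm{x}} f)_p \lesssim \omega_{r_1,t}(f)_p$. Combining the estimates yields \eqref{thm:Jackson - equation}.

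The main obstacle will be constructing the spacial operator $T_{\bm{x}}$ with three properties simultaneously: (i) linearity, (ii) $L_p$-boundedness uniformly down to $0<p<1$, and (iii) satisfying the isotropic Jackson bound on the Lipschitz domain $D$. For $p\ge 1$ one can take a standard averaged Taylor/Whitney-type projector, but for $p<1$ such projectors are not $L_p$-bounded by convolution/Minkowski arguments, and one must work with near-best approximations constructed via covers of $D$ by balls, carefully summed in $\ell_p$; this is precisely the construction adapted from \cite[Sect.~1-2]{DL04} and \cite[Sect.~3.2]{AMS23}. A secondary technical point is ensuring the commutation identity and boundedness hold in the slice-integrated (mixed) $L_p$ sense, which for $0<p<1$ uses the $p$-subadditivity of $\|\cdot\|_p^p$ rather than the triangle inequality throughout.
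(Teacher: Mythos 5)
Your plan is genuinely different from the paper's proof, and it has a gap that you flag but do not resolve, and which I think is fatal to the tensorization route in the range $0<p<1$.

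The paper argues by contradiction and compactness. It assumes the inequality fails for a sequence $f_m$, normalizes to $g_m$ with $\|g_m\|_{L_p}=1$ and unit distance to $\Pi_{t,\bm{x}}^{r_1,r_2}$, uses Marchaud's inequality (\cref{Theorem Marchaud temporal modulus of smoothness}, \cref{Theorem Marchaud spacial modulus of smoothness - anisotropic}) to deduce that all \emph{first-order} moduli of $g_m$ are small, approximates $g_m$ by step functions from a finite grid (\cref{Lemma_Approximation_step_functions}, which is where the cone property enters), extracts a Cauchy sequence of step functions, and concludes that the limit $\psi$ has vanishing moduli yet positive distance to the polynomial space, contradicting \cref{Lemma moduli of smoothness = 0 => anisotropic polynomials}. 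No linear approximation operator is ever constructed. Your route instead builds $P=T_tT_{\bm{x}}f$ explicitly.

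The decisive obstruction in your argument is the commutation identity
\begin{align*}
\Delta_{h,t}^{r_1}\bigl[T_{\bm{x}} f\bigr] = T_{\bm{x}}\bigl[\Delta_{h,t}^{r_1} f\bigr],
\end{align*}
on which the transfer $\omega_{r_1,t}(T_{\bm{x}}f)_p\lesssim\omega_{r_1,t}(f)_p$ hinges. This identity is nothing but linearity of $T_{\bm{x}}$: $\Delta_{h,t}^{r_1}f$ is a fixed linear combination of time-translates of $f$, so the identity holds if and only if $T_{\bm{x}}$ commutes with linear combinations. You state correctly that for $p\ge 1$ one can take a linear averaged-Taylor type projector, but for $0<p<1$ there is no linear, uniformly $L_p$-bounded operator onto $\Pi_{\bm{x}}^{r_2}(D)$ realizing the Jackson bound (this is essentially why \cite{DL04} and \cite{Dek22} resort to nonlinear near-best approximations and domain-convergence arguments for $p<1$). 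Your suggested remedy --- replacing $T_{\bm{x}}$ by a nonlinear near-best operator built from a cover --- abandons linearity and hence destroys the very commutation you need. The gap is acknowledged in your last paragraph but left open, and it is exactly the case $0<p<1$ that the paper is at pains to handle.

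Two secondary problems with the slice-by-slice construction, even granting a nonlinear $T_{\bm{x}}$: first, $t\mapsto T_{\bm{x}}\bigl(f(t,\cdot)\bigr)$ need not be measurable when $T_{\bm{x}}$ is a nonlinear, possibly non-unique, best-approximation map, so $T_{\bm{x}}f$ is not obviously a well-defined element of $L_p(I\times D)$. Second, for the output to land in the tensor space $\Pi_{t,\bm{x}}^{r_1,r_2}$ you implicitly need $T_t$ to be linear as well (so that applying it to the polynomial $\sum_\alpha c_\alpha(t)\bm{x}^\alpha$ coefficientwise is the same as applying it sliceewise in $\bm{x}$), and the same $p<1$ obstruction recurs. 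The paper's nonconstructive route avoids all of this. Your plan would give a cleaner and more quantitative proof for $1\le p\le\infty$; to extend it to $p<1$ you would need a replacement for the commutation step, for example a Marchaud-type argument that reduces to first-order moduli before any operator is applied, which is in effect what the paper does.
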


Furthermore, we consider space-time domains $I\times D$ with $D$ \textit{polyhedral} which can be covered by a \textbf{space-time partiton} $\calP$, i.e., a non-overlapping covering consisting of prisms of the form $J\times S$ with $J$ an interval and $S$ a \mbox{$d$-dimensional simplex}. For such domains, we can obtain the following result for the approximation of anisotropic Besov functions, see Definition \ref{Definition anisotropic Besov space}.

\begin{thm}[Whitney-type inequality]\label{thm:Whitney}
    For any $s_1, s_2\in (0,\infty)$ and $p,q\in (0,\infty]$ with $\frac{1}{\frac{1}{s_1}+\frac{d}{s_2}}-\frac{1}{q}+\frac{1}{p}>0$, $J\times S\in \calP$, and $f\in B^{s_1, s_2}_{q,q}(I\times D)$, there exists $P_{J\times S}\in \Pi^{r_1, r_2}_{t,\bm{x}}(J\times S)$ such that
    \begin{align}\label{thm:Whitney - equation local}
        \|f-P_{J\times S}\|_{L_p(J\times S)} \lesssim_{\, d,p,q,s_1,s_2, \kappa_S ,a(\calP)} |J\times S|^{\frac{1}{\frac{1}{s_1}+\frac{d}{s_2}}-\frac{1}{q}+\frac{1}{p}}|f|_{B^{s_1,s_2}_{q,q}(J\times S)}.
    \end{align}
    Further, setting $P:=\sum\limits_{J\times S\in \calP} \mathds{1}_{J\times S} P_{J\times S}$, where $\mathds{1}_{J\times S}$ denotes the indicator function of the set $J\times S$, it holds
    \begin{equation}\label{thm:Whitney - equation global}
    \begin{split}
        \|f-P\|_{L_p(I\times D)}&\lesssim_{\, d,p,q,s_1,s_2, \kappa_\calP, a(\calP)} 
        \max_{J\times S\in \calP}|J\times S|^{\frac{1}{\frac{1}{s_1}+\frac{d}{s_2}}-\frac{1}{q}+\frac{1}{p}}|f|_{B^{s_1,s_2}_{q,q}(I\times D)}
        \\
        &\le
        |I\times D|^{\frac{1}{\frac{1}{s_1}+\frac{d}{s_2}}-\frac{1}{q}+\frac{1}{p}}|f|_{B^{s_1,s_2}_{q,q}(I\times D)}.
        \end{split}
    \end{equation}
\end{thm}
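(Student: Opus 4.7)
The plan is to combine \cref{thm:Jackson}, applied prism-by-prism, with an embedding-type argument that replaces the resulting $L_p$-moduli of smoothness by the anisotropic Besov seminorm; the global bound is then obtained by summing the local contributions.

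\emph{Local Jackson.} Every prism $J\times S\in\calP$ is a Lipschitz cylinder whose Lipschitz properties are controlled by the shape-regularity $\kappa_S$ of the simplex $S$; moreover, $S$ being convex, the dependence on $\LipProp(S)$ drops. Hence, \cref{thm:Jackson} applied on $J\times S$ produces $P_{J\times S}\in\Pi^{r_1,r_2}_{t,\bm{x}}(J\times S)$ with
\begin{equation*}
\|f-P_{J\times S}\|_{L_p(J\times S)}\lesssim \omega_{r_1,t}(f,J\times S,|J|)_p+\omega_{r_2,\bm{x}}(f,J\times S,\diam(S))_p.
\end{equation*}

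\emph{Bounding the moduli by the Besov seminorm.} The key technical step is to prove
\begin{equation*}
\omega_{r_1,t}(f,J\times S,|J|)_p+\omega_{r_2,\bm{x}}(f,J\times S,\diam(S))_p \lesssim |J\times S|^{\alpha}\, |f|_{B^{s_1,s_2}_{q,q}(J\times S)},
\end{equation*}
with $\alpha:=(1/s_1+d/s_2)^{-1}-1/q+1/p$. Monotonicity in $h$ together with the integral definition of $|\cdot|_{B^{s_1,s_2}_{q,q}}$ yields the Marchaud-type bounds $\omega_{r_1,t}(f,J\times S,|J|)_q\lesssim |J|^{s_1}|f|_{B^{s_1,s_2}_{q,q}(J\times S)}$ and $\omega_{r_2,\bm{x}}(f,J\times S,\diam(S))_q\lesssim \diam(S)^{s_2}|f|_{B^{s_1,s_2}_{q,q}(J\times S)}$. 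The anisotropic quasi-uniformity parameter $a(\calP)$ encodes that $|J|^{s_1}\sim\diam(S)^{s_2}\sim |J\times S|^{1/(1/s_1+d/s_2)}$, so both scale exponents collapse into the single power of $|J\times S|$ appearing in $\alpha$. It remains to pass from $L_q$- to $L_p$-moduli: for $p\le q$, H\"older's inequality on $J\times S$ supplies the factor $|J\times S|^{1/p-1/q}$ for free; for $p>q$ an anisotropic Nikol'ski\u{\i}/Sobolev-type embedding $B^{s_1,s_2}_{q,q}(J\times S)\hookrightarrow L_p(J\times S)$, valid precisely under $\alpha>0$, has to be invoked, which is obtained by rescaling to a reference prism and then applying isotropic embeddings coordinatewise. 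I expect this embedding step, together with the careful bookkeeping of the two anisotropic scales, to be the main obstacle of the argument.

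\emph{Global assembly.} Setting $P:=\sum_{J\times S\in\calP}\mathds{1}_{J\times S}P_{J\times S}$ and using (for $p<\infty$, with the natural max-modification for $p=\infty$)
\begin{equation*}
\|f-P\|_{L_p(I\times D)}^p=\sum_{J\times S\in\calP}\|f-P_{J\times S}\|_{L_p(J\times S)}^p,
\end{equation*}
one inserts the local bound and factors $\max_{J\times S\in\calP}|J\times S|^{\alpha p}$ out, which reduces the task to a subadditivity estimate $\sum_{J\times S\in\calP}|f|^p_{B^{s_1,s_2}_{q,q}(J\times S)}\lesssim |f|^p_{B^{s_1,s_2}_{q,q}(I\times D)}$. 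This in turn follows from the pointwise (in $h$) domination $\omega_{\bullet}(f,J\times S,h)_q\le \omega_{\bullet}(f,I\times D,h)_q$ combined with a Fubini-type rearrangement of the defining integrals and, when $p\neq q$, an $\ell^q\hookrightarrow\ell^p$ inequality; the resulting constant depends only on $\kappa_\calP$ and $a(\calP)$. The trivial second line of \eqref{thm:Whitney - equation global} is then just $\max_{J\times S\in\calP}|J\times S|\le|I\times D|$.
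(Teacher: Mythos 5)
Your plan reproduces the paper's skeleton (local Jackson, moduli-to-seminorm bound, subadditivity of the local seminorms, $a(\calP)$ to collapse the two scale exponents), and for $p\le q$ your route is actually cleaner than the paper's: Jackson on $J\times S$ in $L_p$, H\"older to replace $L_p$-moduli by $L_q$-moduli with the factor $|J\times S|^{1/p-1/q}$, then the observation that $\omega_{r_i,\bullet}(f,J\times S,\delta)_q$ is constant for $\delta$ beyond the diameter scale, so lower-bounding the tail of the defining integral gives $\omega_{r_1,t}(f,J\times S,|J|)_q\lesssim|J|^{s_1}|f|_{B^{s_1,s_2}_{q,q}(J\times S)}$ and likewise in space. (One remark: this last step is not Marchaud's inequality, which estimates low-order moduli by high-order ones; what you use is monotonicity plus the tail estimate.)

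The genuine gap is the case $p>q$. You propose to invoke an embedding $B^{s_1,s_2}_{q,q}(J\times S)\hookrightarrow L_p(J\times S)$ with the correct scaling in $|J\times S|$, and justify it by \enquote{rescaling to a reference prism and then applying isotropic embeddings coordinatewise}. This is precisely the hard step and the coordinatewise reduction does not work: controlling $\|f(\cdot,\bm x)\|_{L_p(J)}$ by $|f(\cdot,\bm x)|_{B^{s_1}_{q,q}(J)}$ slice-by-slice and then integrating in $\bm x$ mixes $L_p$-norms of slices with $L_q$-type Besov seminorms of slices, and these do not recombine into $\|f\|_{L_p(J\times S)}$ on the one side and $|f|_{B^{s_1,s_2}_{q,q}(J\times S)}$ on the other (the orders of integration and the exponents do not interchange for $p\ne q$). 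Moreover, in this paper the embedding $B^{s_1,s_2}_{q,q}\hookrightarrow L_p$ is Theorem~\ref{thm:Besov_Embedding}, which is stated and proved as a \emph{consequence} of Theorem~\ref{thm:Whitney}; using it to prove the local Whitney bound would be circular unless you supply an independent proof. The paper's actual mechanism is the anisotropic dyadic telescope of \cref{Lemma_Whitney_standard_simplex}: on the reference prism $[0,1]\times S$ with $|S|=1$ it bisects the simplex $n$ times (spatial scale $2^{-n/d}$) while bisecting the time interval $\lceil ns_2/(s_1d)\rceil$ times (temporal scale $\approx 2^{-ns_2/(s_1d)}$), applies Jackson in $L_q$ on each subcell, uses the polynomial inverse estimate (\cref{lem:Polynomial scaling}) to pass from $L_q$ to $L_p$ on the small cells, and sums the resulting geometric series using $\alpha>0$. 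That telescoping construction is the irreducible content for $p>q$, and your proposal replaces it with an assertion rather than an argument.

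A minor point in the global assembly: for $p<q$ the inequality $\sum_{J\times S}|f|^p_{B^{s_1,s_2}_{q,q}(J\times S)}\lesssim|f|^p_{B^{s_1,s_2}_{q,q}(I\times D)}$ does not follow from $\ell^q\hookrightarrow\ell^p$ (which requires $q\le p$). The paper sidesteps this by first applying H\"older to $\|f-P\|_{L_p(I\times D)}$ to reduce to the case $p=q$ and only then summing with Corollary~\ref{Corollary to Lemma equivalence of (quasi-)seminorms for polyhedral domains}, which is stated with $q$-powers. You should do the same.
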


Here, $|\cdot|_{B^{s_1,s_2}_{q,q}}$ denotes the corresponding anisotropic Besov \mbox{(quasi-)}seminorm which will be specified in \cref{Section:Approximation_with_Besov_functions},
\begin{align*}
    \kappa_\calP:=\sup\limits_{J\times S\in \calP} \kappa_S :=\sup\limits_{J\times S\in \calP} \frac{\diam(S)}{\rho_S} :=\sup\limits_{J\times S\in \calP}\frac{\diam(S)}{\sup\{\rho\in(0,\infty)\mid B_{\rho}(x_0)\subset S\text{ for some } x_0\in S\}}
\end{align*}
\begin{figure}[H]
	\begin{center}
		\includegraphics[height=4cm]{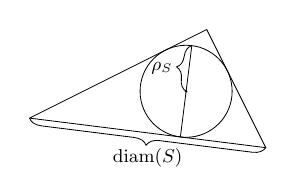}
	\end{center}
    \captionof{figure}{Illustration for $d=2$}
\end{figure}
measures the minimal head angle in the simplices $S$ of the prisms $J\times S\in \calP$, and 
\begin{align}
    a(\calP, s_1, s_2) := a(\calP) := \max\limits_{J\times S\in \calP}\left(\max\left(\frac{|J|}{|S|^{\frac{s_2}{s_1d}}},\frac{|S|^{\frac{s_2}{s_1d}}}{|J|}\right)\right)\label{def:aP}
\end{align}
describes the \textbf{(maximal) anisotropy of the space-time partition} $\calP$.

This directly leads to the following embedding result.

\begin{thm}[Embedding result]\label{thm:Besov_Embedding}
    Under the assumptions of \autoref{thm:Whitney}, the embedding 
    \begin{align*}
        B^{s_1,s_2}_{q,q}(I\times D)\hookrightarrow L_p(I\times D)
    \end{align*} 
    is continuous with an embedding constant dependent on $d,p,q,s_1,s_2, \kappa_\calP, a(\calP), \min\limits_{J\times S\in \calP} |J\times S|$, and $\max\limits_{J\times S\in \calP} |J\times S|$.
\end{thm}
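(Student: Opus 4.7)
I will deduce the embedding by combining the global Whitney inequality of \cref{thm:Whitney} with a polynomial norm equivalence applied prism-by-prism. Set $\alpha := \frac{1}{1/s_1+d/s_2} - \frac{1}{q} + \frac{1}{p} > 0$ and let $P = \sum_{J\times S\in\calP}\mathds{1}_{J\times S}P_{J\times S}$ be the piecewise polynomial provided by \cref{thm:Whitney}. The quasi-triangle inequality together with (\ref{thm:Whitney - equation global}) then yields
\begin{equation*}
\|f\|_{L_p(I\times D)}\lesssim \|f-P\|_{L_p(I\times D)}+\|P\|_{L_p(I\times D)}\lesssim |I\times D|^{\alpha}|f|_{B^{s_1,s_2}_{q,q}(I\times D)}+\|P\|_{L_p(I\times D)},
\end{equation*}
so the task reduces to bounding $\|P\|_{L_p(I\times D)}$ by the full Besov norm $\|f\|_{L_q(I\times D)}+|f|_{B^{s_1,s_2}_{q,q}(I\times D)}$.

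To do so, I would decompose $\|P\|_{L_p(I\times D)}^{p}=\sum_{J\times S\in\calP}\|P_{J\times S}\|_{L_p(J\times S)}^{p}$ and treat each prism separately. Since $P_{J\times S}\in \Pi^{r_1,r_2}_{t,\bm{x}}(J\times S)$ lies in a finite-dimensional tensor-product polynomial space, a standard polynomial norm equivalence---obtained by affinely mapping $J\times S$ onto a reference prism whose shape is governed by $\kappa_S$ and $a(\calP)$---gives
\begin{equation*}
\|P_{J\times S}\|_{L_p(J\times S)}\lesssim |J\times S|^{1/p-1/q}\|P_{J\times S}\|_{L_q(J\times S)},
\end{equation*}
and combining the triangle inequality with (\ref{thm:Whitney - equation local}) applied with $p$ replaced by $q$ yields
\begin{equation*}
\|P_{J\times S}\|_{L_q(J\times S)}\lesssim \|f\|_{L_q(J\times S)}+|J\times S|^{1/(1/s_1+d/s_2)}|f|_{B^{s_1,s_2}_{q,q}(J\times S)}.
\end{equation*}
Since the polynomial supplied by \cref{thm:Whitney} may depend on the target exponent, I would simply take $P_{J\times S}$ to be a near-best $L_q$ approximation, which by the norm equivalence above is automatically a near-best $L_p$ approximation, absorbing the resulting constants.

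Raising these bounds to the $p$-th power, summing over $\calP$ via the elementary inequality $\sum_i a_i^p \le \max(1,(\#\calP)^{1-p/q})(\sum_i a_i^q)^{p/q}$, and using $\sum_{J\times S}\|f\|_{L_q(J\times S)}^{q}=\|f\|_{L_q(I\times D)}^{q}$ together with the expected subadditivity $\sum_{J\times S}|f|_{B^{s_1,s_2}_{q,q}(J\times S)}^{q}\lesssim |f|_{B^{s_1,s_2}_{q,q}(I\times D)}^{q}$ of the anisotropic Besov seminorm over non-overlapping partitions, one arrives at
\begin{equation*}
\|P\|_{L_p(I\times D)}\lesssim \|f\|_{L_q(I\times D)}+|f|_{B^{s_1,s_2}_{q,q}(I\times D)}\sim \|f\|_{B^{s_1,s_2}_{q,q}(I\times D)},
\end{equation*}
where the cardinality $\#\calP$ is absorbed through $\min_{J\times S}|J\times S|$ and $\max_{J\times S}|J\times S|$. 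Combined with the Whitney bound for $\|f-P\|_{L_p}$ this gives the embedding with the advertised dependencies.

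The main technical obstacle will be the anisotropic polynomial norm equivalence on a prism: one must verify that the affine rescaling preserves the tensor-product polynomial degrees $(r_1,r_2)$ and that the resulting constants depend only on $\kappa_S$, $a(\calP)$, $r_1$, $r_2$, $p$, $q$, $d$, rather than on the specific geometry of $J\times S$. A secondary point is to confirm the subadditivity of the anisotropic Besov seminorm over the non-overlapping partition $\calP$, which I expect to follow directly from the moduli-of-smoothness definition to be given in \cref{Section:Approximation_with_Besov_functions}, but is worth checking carefully.
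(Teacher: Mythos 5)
Your proposal follows essentially the same route as the paper: decompose $\|f\|_{L_p}\lesssim\|f-P\|_{L_p}+\|P\|_{L_p}$, apply the global Whitney estimate \eqref{thm:Whitney - equation global} to the first term, and control $\|P\|_{L_p}$ prism-by-prism via the polynomial norm equivalence of \cref{lem:Polynomial scaling} and the triangle inequality. The one notable deviation is in controlling $\|f-P_{J\times S}\|_{L_q}$: you invoke the local Whitney estimate \eqref{thm:Whitney - equation local} with $p$ replaced by $q$ together with the Besov-seminorm subadditivity of \cref{Corollary to Lemma equivalence of (quasi-)seminorms for polyhedral domains}, whereas the paper exploits that the polynomial supplied by \cref{Lemma_Whitney_standard_simplex,Lemma_Whitney_any_simplex} is built from Jackson's inequality and hence already satisfies a scaled Jackson estimate with respect to $q$ on $J\times S$; the resulting moduli are then absorbed directly into $\|f\|_{L_q(J\times S)}$ via \cref{Rem:Moduli of smoothness}. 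The paper's route yields the slightly cleaner intermediate bound $\|P\|_{L_p}\lesssim\|f\|_{L_q(I\times D)}$ and renders your ``near-best $L_q$'' detour unnecessary, since the constructed local polynomial is independent of the target exponent $p$. Both routes succeed for $q\le p$; for $q>p$ the paper instead uses the chain $B^{s_1,s_2}_{q,q}(I\times D)\hookrightarrow L_q(I\times D)\hookrightarrow L_p(I\times D)$, which sidesteps the $\#\calP$-bookkeeping in your H\"older summation (note that $\#\calP$ is not bounded by $\min|J\times S|$ and $\max|J\times S|$ alone, so your absorption step would need $|I\times D|$ as well, just as the H\"older embedding does). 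Finally, the anisotropic polynomial norm equivalence you flag as a technical obstacle is already \cref{lem:Polynomial scaling}, whose constants in fact depend only on $d,p,q,s_1,s_2$ and not on the prism's shape, so that concern is moot; and the Besov subadditivity does not follow ``directly'' from the definition but requires the averaged-moduli equivalence of \cref{Lemma equivalence of (quasi-)seminorms for polyhedral domains}, which introduces the dependence on $\kappa_\calP$.
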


\begin{rem}\label{Remark:Besov_Embedding}
A corresponding embedding for anisotropic Besov spaces which are defined in a slightly different way, can already be found in \cite[Thm.~18.10]{BIN79}. There, in general  the underlying  domain has to fulfill a so-called {\textbf{ horn}}-condition with respect to the anisotropy parameters corresponding to $s_1$ and $s_2$ for the embedding to be true. However, since our domain $I\times D$ has a product structure this condition is fulfilled if the smoothness with respect to the spatial variable is the same in all directions (here $s_2$). We  also refer to \cite[Rem.~3.6]{DS23} for further information in this context. 
\end{rem}

As an application, we will show interesting results for approximation with \textbf{anisotropic space-time finite elements} of order $(r_1, r_2)\in \N^2$ on a space-time partition $\calP$, i.e., we consider the approximation space
\begin{align*}
    \mathbb{V}^{r_1, r_2}_{\calP, \textup{DC}}(I\times D):=\left\{F:I\times D\rightarrow \R\mid F_{|J\times S}\in \Pi^{r_1, r_2}_{t,\bm{x}}(J\times S), \quad J\times S \in \calP\right\}.
\end{align*}

\begin{thm}[Direct theorem]\label{Theorem_dir_est_discont}
	Let $p,q,s_1, s_2$, and $f$ as in \cref{thm:Whitney}, $r_i\in \N$ with $r_i>s_i$, $i=1,2$, and $\varepsilon\in (0,\infty)$. 
    Further, let $\calP_0$ be a tensor-product structure space-time partition covering $I\times D$. Then there exists a refined space-time partition $\calP$ with
    \begin{align}\label{Theorem_dir_est_discont_1}
		\#\calP - \#\calP_0 \le C_1\, \varepsilon^{-\left(\frac{1}{s_1}+\frac{d}{s_2}\right)}
	\end{align}
    with $C_1 = C_1(d, p, q, s_1, s_2, \kappa_{\calP_0}, a(\calP_0), |I\times D|, \min\limits_{J_0\times S_0 \in \calP_0} |J_0\times S_0|, \max\limits_{J_0\times S_0 \in \calP_0} |J_0\times S_0|)$, 
    such that for some $F\in \mathbb{V}^{r_1, r_2}_{\calP, \textup{DC}}(I\times D)$ it holds 
        \begin{align}\label{Theorem_dir_est_discont_2}
        \|f-F\|_{L_p(I\times D)} \le C_2 \, \varepsilon |f|_{B^{s_1, s_2}_{q,q}(I\times D)},
\end{align}
with $C_2 = C_2(\,d, p, q, s_1, s_2, \kappa_{\calP_0}, a(\calP_0), |I\times D|, \min\limits_{J_0\times S_0 \in \calP_0} |J_0\times S_0|, \max\limits_{J_0\times S_0 \in \calP_0} |J_0\times S_0|,\# \calP_0 )$
\end{thm}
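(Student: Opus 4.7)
The plan is to combine the local Whitney inequality \eqref{thm:Whitney - equation local} with an adaptive refinement algorithm in the spirit of Binev--Dahmen--DeVore--Petrushev \cite{BDDP02} and Gaspoz--Morin \cite{GM14}. Write $\alpha := \frac{1}{1/s_1 + d/s_2}$, so that the exponent in \eqref{thm:Whitney - equation local} equals $\beta := \alpha - \tfrac{1}{q} + \tfrac{1}{p} > 0$. First, I would introduce a bisection-type refinement rule for prisms $J \times S$ in which one either bisects the time interval $J$ or performs a newest-vertex bisection of the simplex $S$. Alternating the two in a pattern dictated by the ratio $s_2/(s_1d)$ entering $a(\calP)$ keeps the shape-regularity $\kappa$ and the anisotropy parameter $a(\cdot)$ bounded by constant multiples of $\kappa_{\calP_0}$ and $a(\calP_0)$, and $\#\calP$ is then controlled linearly by the number of bisection steps. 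On every prism of the current partition I would then apply \cref{thm:Whitney} to obtain $P_{J \times S} \in \Pi^{r_1,r_2}_{t,\bm{x}}(J \times S)$ with $\|f - P_{J \times S}\|_{L_p(J \times S)} \lesssim |J \times S|^{\beta}\, |f|_{B^{s_1,s_2}_{q,q}(J \times S)}$, and take $F := \sum_{J \times S \in \calP} \mathds{1}_{J \times S}\, P_{J \times S} \in \mathbb{V}^{r_1,r_2}_{\calP,\textup{DC}}(I \times D)$.

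Starting from $\calP_0$, I would then run a greedy/thresholding algorithm that at each step bisects the prism carrying the largest local indicator $|J \times S|^{\beta}\, |f|_{B^{s_1,s_2}_{q,q}(J \times S)}$ and halts once the $L_p$-error falls below $C_2\,\varepsilon\,|f|_{B^{s_1,s_2}_{q,q}(I \times D)}$. The error bound \eqref{Theorem_dir_est_discont_2} is obtained by summing the local Whitney estimates and invoking the $q$-subadditivity $\sum_{J \times S \in \calP} |f|_{B^{s_1,s_2}_{q,q}(J \times S)}^{q} \lesssim |f|_{B^{s_1,s_2}_{q,q}(I \times D)}^{q}$ together with a Jensen/H\"older inequality that handles the subcases $q \ge p$ and $q < p$ separately. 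The cardinality estimate \eqref{Theorem_dir_est_discont_1} follows from a BDDP-style counting argument: the decreasing rearrangement of the local indicators is $\ell^{\tau}$-summable with $1/\tau = 1/\alpha + 1/p$, which (after choosing the threshold proportional to $\varepsilon\,|f|_{B^{s_1,s_2}_{q,q}}$ so that the $|f|_B$-factors cancel in the final bound) yields $\#\calP - \#\calP_0 \lesssim \varepsilon^{-1/\alpha} = \varepsilon^{-(1/s_1 + d/s_2)}$.

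The main obstacle is expected to lie in the refinement rule and the associated counting. Since the exponent $s_2/(s_1d)$ is generally irrational, the ratio $|J|/|S|^{s_2/(s_1d)}$ cannot be preserved exactly under any finite bisection pattern; one must show that the alternation scheme keeps it bounded on average and that the resulting refinement tree is compatible with the BDDP argument, with all constants depending only on the quantities listed in \eqref{Theorem_dir_est_discont_1}--\eqref{Theorem_dir_est_discont_2}. The regime $q < p$ is particularly delicate, as uniform refinement alone only yields $\varepsilon^{-1/\beta}$ prisms with $1/\beta > 1/\alpha$, so the adaptive advantage provided by the greedy selection has to be fully exploited in order to reach the claimed rate.
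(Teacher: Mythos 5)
Your plan mirrors the paper's quite closely: local Whitney on each cell, an alternating space--time bisection rule that preserves the anisotropy ratio, and a greedy/thresholding refinement counted in BDDP--Gaspoz--Morin style. The paper's $\textup{ATOMIC}\underline{~}\textup{SPLIT}$ implements exactly the alternating pattern you sketch, with $m = \lceil n s_2/(s_1 d)\rceil - \lceil (n-1)s_2/(s_1 d)\rceil$ temporal halvings per spatial bisection; \cref{Lemma_Anisotropic_Bisection_maintains_anisotropy} shows this keeps $a(\calP)$ and $\kappa_\calP$ uniformly bounded, which resolves the irrationality concern you raise. One small difference: the paper's $\textup{GREEDY}$ marks in each pass \emph{all} cells whose local best-$L_p$ error exceeds a fixed threshold $\delta$, rather than bisecting a single worst cell at a time; the parallel marking makes the level-by-level count cleaner, since $\#\calM_j$ is bounded simultaneously by the volume count $2^{j(1+s_2/(s_1 d))}$ and by $\delta^{-q}\,2^{-jq(1+s_2/(s_1 d))\beta}\,|f|_{B^{s_1,s_2}_{q,q}(I\times D)}^q$ (using \cref{Corollary to Lemma equivalence of (quasi-)seminorms for polyhedral domains}), after which one sums across the crossover level $j_0$.

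Two of your exponents are off, and the second one matters. Writing $\alpha := \frac{1}{1/s_1 + d/s_2}$ and $\beta := \alpha - \tfrac1q + \tfrac1p$, the correct BDDP rate--index relation gives $1/\tau = \alpha + 1/p$, not $1/\tau = 1/\alpha + 1/p$. More importantly, with a threshold $\delta$ merely proportional to $\varepsilon\,|f|_{B^{s_1,s_2}_{q,q}(I\times D)}$ the crossover count produces $\#\calP - \#\calP_0 \lesssim (|f|_{B}/\delta)^{1/(\alpha + 1/p)} = \varepsilon^{-1/(\alpha+1/p)}$, and the resulting $L_p$-error is $\varepsilon^{p\alpha/(p\alpha+1)}\,|f|_{B}$, so your claimed cancellation does not produce $\varepsilon^{-1/\alpha}$ cells and error $\varepsilon\,|f|_B$ at the same time. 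The paper fixes this by taking $\delta = \varepsilon^{1 + 1/(\alpha p)}\,|f|_{B^{s_1,s_2}_{q,q}(I\times D)} = \varepsilon^{1 + (1/s_1 + d/s_2)/p}\,|f|_{B^{s_1,s_2}_{q,q}(I\times D)}$, under which $(|f|_B/\delta)^{1/(\alpha + 1/p)} = \varepsilon^{-1/\alpha}$ exactly, and then $\|f - F\|_{L_p(I\times D)}^p \le \#\calP\,\delta^p \lesssim \varepsilon^p\,|f|_B^p$. The mismatch is repairable by a reparametrization of $\varepsilon$, but as written your exponents do not close the argument.
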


Here, \textbf{refinement} means finitely many applications of the method $\textup{ATOMIC}\underline{~}\textup{SPLIT}(\cdot, d, s_1, s_2)$ that will be discussed in \cref{Sect:Dir_est}.

\subsection{Preliminaries}\label{Section:Preliminaries}

Since we will often consider bounded \textbf{Lipschitz domains}, we will give a definition according to \cite[Def.~2.2]{DL04}, to be self-contained.

\begin{defi}\label{Definition of Lipschitz domain}
Let $D\subset \R^d$ for $d\in \N_{\ge 2}$. 
	\begin{enumerate}[label=(\roman*)]
		\item Let $\psi:\R^{d-1}\rightarrow \R$ be a Lipschitz function with Lipschitz constant $\Lip(D)\in (0,\infty)$. Then if  
		\begin{align*}
			D = \{(u,v)\in \R^{d-1}\times \R \mid \psi(u)>v \} ,
		\end{align*}	 
	$D$ is called a \textbf{Lipschitz graph domain}. 
		\item If $D$ is a rotated and/or translated Lipschitz graph domain, it is called a \textbf{special Lipschitz domain}.
		\item $D$ is called a \textbf{Lipschitz domain}, if it is bounded and there are open sets $U_j$ and special Lipschitz domains $D_j$ for $j=1,\dots, \LipCov(D)$, \mbox{$\LipCov(D)\in \N$}, with respect to the Lipschitz constant \mbox{$\Lip(D)\in (0,\infty)$} such that the following conditions hold true:
        \begin{enumerate}
			\item $\displaystyle{\partial D\subset \bigcup\limits_{j=1}^{\LipCov(D)} U_j}$;
			\item $D\cap U_j = D_j\cap U_j$,$\ \ j=1,\dots, \LipCov(D)$; \label{Definition of Lipschitz domain - special Lipschitz domain part}
			\item There is $\LipDelta(D)\in(0,\infty)$ such that for every pair of points \mbox{$\bm{x}_1, \bm{x}_2\in D$} with $|\bm{x}_1 - \bm{x}_2|<\LipDelta(D)$ and $d(\bm{x}_i, \partial D)<\LipDelta(D)$, $i=1,2$, there exists an index \mbox{$j\in\{1,\dots,\LipCov(D)\}$} with $\bm{x}_i\in U_j$ and $d(\bm{x}_i, \partial U_j)>\sigma(D)$ for $i\in\{1,2\}$.\label{Definition of Lipschitz domain - Delta part}\footnote{In general, for $\bm{p}\in \R^d$ a and set $M\subset \R^d$, we denote the euclidean distance between $d$ and $M$ by $d(\bm{p},M)$. Similarly, the distance between $M$ and another set $N\subset\R^d$ is denoted by $d(M,N)$.}
		\end{enumerate}
		The set $\LipProp(D):=\left\{\Lip(D),\LipCov(D) , \frac{\LipDelta(D)}{\diam(D)} \right\}$ is called (the set of) \textbf{Lipschitz properties} of $D$. 
    \end{enumerate}
\end{defi}

\begin{rem}\label{Remark Lipschitz domain}
	\begin{enumerate}[label=(\roman*)]
		\item Let us point out here that $\ref{Definition of Lipschitz domain - Delta part})$ is not necessary for the bounded Lipschitz domains that we will consider, according to \cite[Def.~4.9]{AF03} and the corresponding comment below, respectively. Nevertheless, we choose this definition since we will often need the parameter $\LipDelta(D)$ in the course of this article.
        
		\item Every Lipschitz domain fulfills an \textbf{interior cone condition}, i.e., there is a bounded cone $\mathcal{C}$ with cone tip at $0$ and $\diam(\mathcal{C})\le\frac{\LipDelta(D)}{4}$ depending only on $\Lip(D)$ and $\LipDelta(D)$ such that there are rotations $R_j$, \mbox{$j=1,\dots, \LipCov(D)$}, with the property that for every $\bm{x}\in D$ there exists some $j$ with $\bm{x} + R_j(\mathcal{C})\subset D$. We put $\mathcal{C}_j:=R_j(\mathcal{C})$.\label{Remark Lipschitz domain - interior cone condition}\footnote{Qualitatively, this result can be found in \cite[Rem.~4.11]{AF03}, but there the bound on the diameter is not given. To be self-contained, we give a short proof in \cref{Appendix:Proof of Lemma equivalence averaged spacial modulus of smoothness}.}
		
		\item For any Lipschitz domain, it holds true that $\LipDelta(D) \le \diam(D)$, $j=1,\dots,\LipCov(D)$.\footnote{We assume that this result is well known, but nevertheless give a short proof in \cref{Appendix:Proof of Lemma equivalence averaged spacial modulus of smoothness} as convenience for the reader.} But a lower bound for $\LipDelta(D)$ in terms of $\diam(D)$ cannot be given for arbitrary Lipschitz domains. \label{Definition of Lipschitz domain - delta < diam}
		
	\end{enumerate}
\end{rem}

In the proof of \cref{thm:Jackson} further below, we will need to find a converging sub-sequence of a sequence of bounded Lipschitz domains which share the same Lipschitz properties. To do so, we will make good use of the following lemma. 

\begin{lem}\label{lem:convergence_of_Lipschitz_domains}
    Let $(D_m)_{m\in \N}\subset \R^d$, $d\in \N$, be a sequence of Lipschitz domains with $\LipProp(D_{m_1})=\LipProp(D_{m_2})$ for $m_1, m_2\in \N$. Additionally, assume that $\diam(D_m)=1$ and $D_m\subset [0,1]^d$ for any $m\in \N$. Then, there is a subsequence $(D_{m_k})_{k\in\N}$ converging against another bounded Lipschitz domain $D\subset [0,1]^d$ with $\diam(D)=1$ in the sense that 
    \begin{align}\label{lem:convergence_of_Lipschitz_domains - equation}
        |D\triangle D_{m_k}|:=|(D\setminus D_{m_k})\cup (D_{m_k}\setminus D)|\xrightarrow{k\rightarrow \infty} 0 \quad\text{and}\quad  |D_{r_2, \bm{h}}\triangle (D_{m_k})_{r_2, \bm{h}}| \xrightarrow{k\rightarrow \infty} 0
    \end{align}
    for any $\bm{h}\in \R^d$ with $|\bm{h}|\le 1$.
\end{lem}
\begin{proof}
    \cite[Rem.~2.4.8 and Thm.~2.4.10]{HP18} implies the existence of a bounded Lipschitz domain $D$ to which a subsequence converges with respect to three different notions of convergence, i.e., with respect to the Hausdorff distance, their characteristic functions, and compactly. Furthermore, also the closures $(\overline{D_{m_k}})_{k\in \N}$ and boundaries $(\partial D_{m_k})_{k\in \N}$ converge to $\overline{D}$ and $\partial D$ with respect to the Hausdorff distance, respectively. Since the precise definitions of these types of convergence for sets are not of much relevance in this article, apart from this proof, we do not explicitly provide them here, but instead refer the reader to \cite[Ch.~2.2]{HP18}, where they are very well illustrated. 
    
    In particular, the convergence with respect to the Hausdorff distance implies that $D\subset [0,1]^d$, since $D_m\subset [0,1]^d$ for all $m\in \N$. Further, the diameter of two compact sets is continuous with respect to the Hausdorff distance, therefore $\diam(D)=\diam(\overline{D})=\lim\limits_{k\rightarrow \infty} \diam(\overline{D_{m_k}})=\lim\limits_{k\rightarrow \infty} \diam(D_{m_k})=1$.\footnote{According to \cite[Ch.~2.2.3.2~(10)]{HP18} this might be false for open sets, but it is easy to show that it indeed holds true for compact ones. In particular, this is in agreement with \cite[Ch.~2.2.3~12.]{HP05}, which is the forerunner to \cite{HP18}.} Moreover, the convergence with respect to the characteristic functions yields the first assertion in \eqref{lem:convergence_of_Lipschitz_domains - equation}, according to \cite[Rem.~2.2.5]{HP18}. The second one now follows from it, additionally using the convergence of $(\overline{D}_{m_k})_{k\in \N}$ and $(\partial D_{m_k})_{k\in \N} $ to $\overline{D}$ and $\partial D$, respectively, with respect to the Hausdorff distance.
\end{proof}

Further, an important kind of Lipschitz domains are \textbf{convex sets} which have special Lipschitz properties.
\begin{thm}\label{thm:Lipschitz domain - convex sets}
    Let $D\subset \R^d$, $d\in \N$, be a convex domain. Then $D$ is a Lipschitz domain and there is a non-singular affine linear mapping $A:\R^d\rightarrow \R^d$ such that  $\tilde{D}:=A(D)$ has the properties $B_1(0)\subset \tilde{D}\subset B_d(0)$ and $\LipProp(\tilde{D})\sim_{\,d}1$.
\end{thm}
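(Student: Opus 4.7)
The plan is to split the statement into two independent components: (i) the existence of the normalising affine map $A$, and (ii) the Lipschitz structure of $\tilde D = A(D)$ with constants depending only on $d$. For (i), I would appeal to John's theorem on the maximal-volume inscribed ellipsoid: since $D$ is a bounded convex body with non-empty interior, there is a unique ellipsoid $E \subset D$ of maximal volume, and $D$ is contained in the $d$-fold dilation of $E$ about its centre. Taking $A$ to be the non-singular affine map that sends $E$ to $B_1(0)$, the inclusion $E \subset D \subset c + d(E - c)$ translates directly into $B_1(0) \subset \tilde D \subset B_d(0)$. From now on I work only with $\tilde D$.

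The crucial geometric observation for (ii) is the following uniform cone condition. Fix any boundary point $p \in \partial \tilde D$; since $B_1(0) \subset \tilde D$, convexity forces the convex hull $\operatorname{conv}(\{p\} \cup B_1(0))$ to lie in $\overline{\tilde D}$. Because $|p| \le d$, this convex hull contains a right circular cone with apex $p$, axis pointing from $p$ towards the origin, and half-opening angle at least $\alpha_d := \arctan(1/d)$. In particular $\tilde D$ satisfies an interior cone condition with a single cone whose aperture and diameter depend only on $d$.

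Next I translate this cone condition into the local Lipschitz graph description required by \cref{Definition of Lipschitz domain}. Around each $p \in \partial \tilde D$, pick Cartesian coordinates whose last axis is the cone axis at $p$. A standard convex-geometry argument (exploiting that two rays from $p$ into $\tilde D$, both inside the cone, together with monotonicity of supporting hyperplanes) shows that, in a neighbourhood $U_p$ of uniform radius $r_d > 0$, the boundary $\partial \tilde D \cap U_p$ is the graph of a function $\psi_p$ which is Lipschitz with constant $\Lip(\tilde D) \le \cot(\alpha_d)$, i.e.\ bounded purely in terms of $d$. This directly yields special Lipschitz neighbourhoods in the sense of \cref{Definition of Lipschitz domain}. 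The compactness of $\partial \tilde D \subset B_d(0)$ together with the uniform radius $r_d$ lets me extract a finite subcover whose cardinality $\LipCov(\tilde D)$ is a volumetric constant depending only on $d$, e.g.\ $\LipCov(\tilde D) \lesssim_d (d/r_d)^{d-1}$.

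Finally, to control the parameter $\LipDelta(\tilde D)/\diam(\tilde D)$, I take a Lebesgue-number-type argument: the finite open cover $\{U_j\}$ of $\partial \tilde D$ admits a uniform Lebesgue number $\LipDelta(\tilde D) \gtrsim_d 1$, and the distance from each point of $\partial U_j \cap \tilde D$ to the boundary of $U_j$ can be forced to exceed the same constant by shrinking the cover slightly. Since $\diam(\tilde D) \le 2d$, the ratio $\LipDelta(\tilde D)/\diam(\tilde D)$ depends only on $d$, which completes the proof. The main technical obstacle I anticipate is the careful bookkeeping in the second step: verifying that the cone condition really forces a \emph{graph} representation with the claimed Lipschitz constant (rather than just a local immersed boundary) and that the neighbourhood radius $r_d$ can be chosen uniformly; everything else is either a direct invocation of John's theorem or a compactness/covering argument.
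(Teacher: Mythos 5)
Your proposal is correct and follows essentially the same route as the paper, which simply invokes John's theorem ([DL04, Prop.~2.5]) for the normalising affine map and then cites [DL04, Thm.~2.3] for the fact that a convex body sandwiched between $B_1(0)$ and $B_d(0)$ has Lipschitz parameters controlled only by $d$; you have merely sketched the proof of that second citation (cone condition from convexity, local graph representation, compactness for the finite cover, Lebesgue number for $\LipDelta$) rather than quoting it. The steps you leave as ``standard convex-geometry arguments'' are genuine and well known, so the outline is sound and at a comparable level of rigor to the two-line citation proof in the paper.
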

\begin{proof}
    This is a direct consequence of \textit{John's Theorem}, see \cite[Prop.~2.5]{DL04} (originally proven in \cite[Thm.~III]{Joh48})  together with \cite[Thm.~2.3]{DL04}.
\end{proof}

The rest of this paper is organized as follows: In Section \ref{Sect:Approx of Lebesgue functions} we define the temporal and spatial moduli of smoothness, provide Marchaud's inequality for them, and present a proof for  Jackson-type estimates. In Section \ref{Section:Approximation_with_Besov_functions} we introduce a certain class of anisotropic Besov spaces, study some of their properties, and use them to establish  Whitney-type inequalities. Finally, in Section \ref{Sect:Dir_est} we apply our results in order to obtain direct estimates for approximation with discontinuous adaptive space-time finite elements. Some proofs were moved to  Appendix \ref{Appendix:Proof of Lemma equivalence averaged spacial modulus of smoothness} in order to improve the readability of the paper.

\section{Approximation of Lebesgue functions}\label{Sect:Approx of Lebesgue functions}

From now on, let $I$ be a bounded interval and $D\subset \R^d$, $d\in \N$, a bounded Lipschitz domain. Further, we will use the notation $\mu(p):=\min(1,p)$ for any $p\in (0,\infty]$ for the rest of the article. Then, in particular, the \mbox{(quasi-)}norm $\|\cdot\|_{L_p}$ of the corresponding $L_p$-Lebesgue space is $\mu(p)$-subadditive on arbitrary domains.

\subsection{Temporal and spatial moduli of smoothness}\label{Subsect: Temporal and spacial moduli of smoothness}

For our analysis and definition of anisotropic Besov spaces later on, in particular, for measuring higher smoothness of functions with respect to time and space,  we need to introduce the temporal and spatial moduli of smoothness which in turn are based on the following finite difference operators.

\begin{defi}\label{def:diff_op} We  define the \textbf{temporal and spatial difference operators} for $f:I\times D\rightarrow \R$ inductively via
    \begin{align*}
		&\Delta^1_{h,t}f(\tau,\bm{z}):=\Delta_{h,t}f(\tau,\bm{z}):=f(\tau+h, \bm{z}) - f(\tau,\bm{z}),& \quad &\Delta_{h,t}^{r_1}=\Delta_{h,t}^{r_1-1}\circ \Delta_{h,t}^1\text{ for }r_1\ge 2,& \quad &h\in \R,
		\\&\Delta^1_{\bm{h},\bm{x}}f(\tau,\bm{z}):=\Delta_{\bm{h},\bm{x}}f(\tau,\bm{z}):=f(\tau,\bm{z}+\bm{h}) - f(\tau,\bm{z}),& \quad &\Delta_{\bm{h},\bm{x}}^{r_2}=\Delta_{\bm{h},\bm{x}}^{r_2-1}\circ \Delta_{\bm{h},\bm{x}}^1\text{ for }r_2\ge 2,& \quad &\bm{h}\in \R^d,
	\end{align*}
	with $(\tau,\bm{z})\in I_{r_1,h}\times D$ and $(\tau,\bm{z})\in I\times D_{r_2, \bm{h}}$, respectively. Here $I_{r_1,h}:=\{\tau\in I\mid \tau+ih\in I\text{ for all }i=1,\dots,r_1\}$ and $D_{r_2, \bm{h}}:=\{\bm{z}\in D\mid \bm{z}+i\bm{h}\in D\text{ for all }i=1,\dots,r_2\}$. Moreover, we set $\Delta^0_{h,t}f:=\Delta^0_{\bm{h},\bm{x}}f:=f$.
\end{defi}

\begin{rem}\label{Rem:Difference operators}
    Similar to \cite[Ch.~2.7]{DL93}  one can inductively show the algebraic identities
    \begin{align*}
        \Delta_{h,t}^{r_1}f(\tau,\bm{z}) = \sum\limits^{r_1}_{i=0}(-1)^{r_1-i}\binom{r_1}{i}f(\tau+ih,\bm{z})\quad \text{and} \quad \Delta_{\bm{h}, \bm{x}}^{r_2}f(\tau,\bm{z}) = \sum\limits^{r_2}_{i=0}(-1)^{r_2-i}\binom{r_2}{i}f(\tau,\bm{z}+i\bm{h})
    \end{align*}
    for $r_1, r_2 \in \N_0$. In particular, let $r_1, r_2, k_1, k_2 \in \N_0$, $k_i\le r_i$, $i=1,2$, $h\in \R$, and $\bm{h}\in \R^d$. Then, the algebraic identities together with the $\mu(p)$-subadditivity of the $\|\cdot\|_{L_p}$-\mbox{(quasi-)}norm imply the estimates $\|\Delta_{h,t}^{r_1}f\|_{L_p(I_{r_1,h}\times D)}^{\mu(p)}\le 2^{r_1 - k_1}\|\Delta_{h,t}^{k_1}f\|_{L_p(I_{k_1,h}\times D)}^{\mu(p)}$ and $\|\Delta_{\bm{h},\bm{x}}^{r_2}f\|_{L_p(I\times D_{r_2,\bm{h}})}^{\mu(p)}\le 2^{r_2 - k_2}\|\Delta_{\bm{h},\bm{x}}^{k_2}f\|_{L_p(I\times D_{r_2,\bm{h})}}^{\mu(p)}$ for any \mbox{$f\in L_p(I\times D)$}.
\end{rem}

\begin{defi}\label{def:moduli_of_smoothness}
    Let  $p\in(0,\infty]$, $f\in L_p(I\times D)$, $r_1, r_2\in \N_0$, and $\delta\in(0,\infty)$. The  \textbf{temporal and spatial moduli of smoothness} in the supremum form are defined as 
    \begin{align*}
		\omega_{r_1, t}(f,I\times D,\delta)_p:= \sup\limits_{|h|\le \delta}\|\Delta_{h,t}^{r_1}f\|_{L_p\left(I_{r_1, h}\times D\right)}, \quad \omega_{r_2, \bm{x}}(f,I\times D,\delta)_p:= \sup\limits_{|\bm{h}|\le \delta}\|\Delta_{\bm{h},\bm{x}}^{r_2}f\|_{L_p\left(I\times D_{r_2, \bm{h}}\right)}
	\end{align*}
    as well as in the averaged form, for $0<p<\infty$:
    \begin{align*}
        &w_{r_1, t}(f,I\times D,\delta)_p:= \left(\frac{1}{2\delta}\int\limits_{|h|\le \delta}\|\Delta_{h,t}^{r_1}f\|^p_{L_p\left(I_{r_1, h}\times D\right)}\,dh\right)^\frac{1}{p}, 
        \\&w_{r_2, \bm{x}}(f,I\times D,\delta)_p:= \left(\frac{1}{(2\delta)^d}\int\limits_{|h|\le \delta}\|\Delta_{\bm{h},\bm{x}}^{r_2}f\|^p_{L_p\left(I\times D_{r_2, \bm{h}}\right)}\,d\bm{h}\right)^\frac{1}{p}.
    \end{align*}
    Further, $w_{r_1, t}(f,I\times D,\delta)_\infty:=\omega_{r_1, t}(f,I\times D,\delta)_\infty$, $w_{r_2, \bm{x}}(f,I\times D,\delta)_\infty:=\omega_{r_2, \bm{x}}(f,I\times D,\delta)_\infty$, and $\omega_{r_1, t}(f,I\times D,0)_p:=w_{r_1,t}(f,I\times D,0)_p:=\omega_{r_2, \bm{x}}(f,I\times D,0)_p:=w_{r_2,\bm{x}}(f,I\times D,0)_p:=0$, if $r_1, r_2\ge 1$.
\end{defi}

\begin{rem}\label{Rem:Moduli of smoothness}
Let $f\in L_p(I\times D)$, $(r_i, dir)\in \{(r_1, t),(r_2, \bm{x})\}$, $k_i\in \N_0$ with $k_i\le r_i$, $\delta\in[0,\infty)$, $m\in \N$, and $l\in [0,\infty)$. Then, as in the isotropic case, 
\begin{enumerate}[label=(\roman*)]
\item $\omega_{r_i, dir}(f, I\times D, \cdot)_p$ is monotonically increasing, 
\item $\omega_{r_i, dir}(\cdot, I\times D, \delta)^{\mu(p)}_p$ is subadditive, 
\item $\omega_{r_i, dir}(f, I\times D, \delta)_p^{\mu(p)}\le 2^{r_i-k_i}\omega_{k_i, dir}(f, I\times D, \delta)_p^{\mu(p)}$, and \label{Rem:Moduli of smoothness - scaling item}
\item admits the two similar scaling properties \begin{align*}
\omega_{r_i, dir}(f, I\times D, m\delta)_p^{\mu(p)} &\le m^{r_i}\omega_{r_i, dir}(f, I\times D, \delta)_p^{\mu(p)}, \\ \omega_{r_i, dir}(f, I\times D, l\delta)_p^{\mu(p)} &\le (l+1)^{r_i}\omega_{r_i, dir}(f, I\times D, \delta)_p^{\mu(p)}.
\end{align*}
\end{enumerate}
Furthermore, property \ref{Rem:Moduli of smoothness - scaling item} still holds true in the case $r_i=0$ when \mbox{$\omega_{0,dir}(f,I\times D, \cdot)_p:=\|f\|_{L_p(I\times D)}$} is used. Moreover, the third, and fourth property hold true for the averaged moduli. These results can be proven as in \cite[Ch.~2.7~and~Ch.~12.5]{DL93}. 
\end{rem}

We will also make use of the following lemma that characterizes the behaviour of the moduli of smoothness under affine linear transformations.

\begin{lem}\label{lem:Behaviour under affine linear transformations}
    Let $p\in(0,\infty]$, $f\in L_p(I\times D)$, $\delta\in [0,\infty)$, and  $r_1, r_2\in\N_0$. Further, let $I'$ be a bounded interval, $D'\subset \R^d $, $d\in \N$, a bounded Lipschitz domain, and $\varphi:I'\times D'\rightarrow I\times D$ an affine linear and bijective mapping with $\varphi(\tau,\bm{z})=(a\tau+b, c\bm{z}+d) $ for $a,b,c,d\in \R$. Then 
 \begin{equation}\label{moduli-1}
    \begin{split}
        \omega_{r_1, t}(f, I\times D, \delta)_p & = \left(\frac{|I\times D|}{|I'\times D'|}\right)^\frac{1}{p} \omega_{r_1, t}\left(f\circ \varphi, I'\times D', \delta\,\frac{|I'|}{|I|}\right)_p 
        \\\text{and}\qquad    \omega_{r_2, \bm{x}}(f, I\times D, \delta)_p & = \left(\frac{|I\times D|}{|I'\times D'|}\right)^\frac{1}{p} \omega_{r_2, \bm{x}}\left(f\circ \varphi, I'\times D', \delta\,\frac{|D'|^\frac{1}{d}}{|D|^\frac{1}{d}}\right)_p.  
    \end{split}
    \end{equation}
    Such mappings will be called \textbf{scalings} in the sequel. More generally, for $\varphi=(\varphi_1, \varphi_2)$ with $\varphi_1:I'\rightarrow I$ and $\varphi_2:D'\rightarrow D$ affine linear and bijective, i.e., $\varphi_1(t)= at+b$, $a,b\in \R$, $a\neq 0$, and $\varphi_2(\bm{x})=M\bm{x}+\bm{v}$, $M\in \R^{d\times d}$ regular, $\bm{v}\in \R^d$, it holds,  
    \begin{equation}\label{moduli-2}
    \begin{split}
        \omega_{r_1, t}(f, I\times D, |I|)_p 
        & = |\textup{det } \nabla \varphi|^\frac{1}{p} \, \omega_{r_1, t}\left(f\circ \varphi, I'\times D', |I'|\right)_p
        \\\text{and}\qquad     \omega_{r_2, \bm{x}}(f, I\times D, \diam(D))_p & = |\textup{det } \nabla \varphi|^\frac{1}{p} \omega_{r_2, \bm{x}}\left(f\circ \varphi, I'\times D', \diam(D')\right)_p,
    \end{split}
    \end{equation}
    where $\nabla \varphi$ corresponds to the Jacobian of $\varphi$. The corresponding results hold for the averaged moduli as well.
\end{lem}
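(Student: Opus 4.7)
The plan is to reduce each identity to two core facts: a commutation rule between the finite difference operators and the affine substitution, and a change-of-variables formula for the $L_p$-integral. Writing $\varphi_1(\tau) = a\tau + b$ and $\varphi_2(\bm{z}) = A\bm{z} + \bm{c}$ (with $A = cI$ in the scaling setting of \eqref{moduli-1} and $A$ an arbitrary invertible $d\times d$ matrix in \eqref{moduli-2}), my first step is to verify, using the algebraic formulas from \cref{Rem:Difference operators}, the pointwise identities
\[
(\Delta^{r_1}_{h,t} f) \circ \varphi = \Delta^{r_1}_{h/a,\, t}(f\circ\varphi), \qquad (\Delta^{r_2}_{\bm{h}, \bm{x}} f) \circ \varphi = \Delta^{r_2}_{A^{-1}\bm{h},\, \bm{x}}(f\circ\varphi),
\]
together with the set identities $\varphi^{-1}(I_{r_1,h} \times D) = I'_{r_1, h/a} \times D'$ and $\varphi^{-1}(I \times D_{r_2, \bm{h}}) = I' \times D'_{r_2, A^{-1}\bm{h}}$. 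Combining these with the substitution rule, whose Jacobian satisfies $|\det D\varphi| = |a|\cdot|\det A|$, yields the basic formula
\[
\|\Delta^{r_1}_{h,t} f\|^p_{L_p(I_{r_1,h}\times D)} = |\det D\varphi|\, \|\Delta^{r_1}_{h/a,t}(f\circ\varphi)\|^p_{L_p(I'_{r_1,h/a}\times D')},
\]
and analogously for the spatial difference.

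For \eqref{moduli-1} the map $\bm{h} \mapsto \bm{h}/c$ is a scalar contraction that preserves Euclidean balls, so taking the supremum over $|h|\le\delta$ and substituting $h' = h/a$ converts the range to $|h'|\le \delta|I'|/|I|$, and analogously to $|\bm{h}'|\le\delta(|D'|/|D|)^{1/d}$ in space; noting $|\det D\varphi| = |I\times D|/|I'\times D'|$ delivers the claimed factor. For the averaged moduli the Jacobian factor $|a|$ absorbed by $dh = |a|\,dh'$ is exactly cancelled by the replacement of the normalization $1/(2\delta)$ by $1/(2\delta/|a|)$, producing the same transformation rule.

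The identity \eqref{moduli-2} is the one requiring a genuinely new idea, because a generic invertible $A$ does not send $\{|\bm{h}|\le\diam(D)\}$ into $\{|A^{-1}\bm{h}|\le\diam(D')\}$. The key observation is that whenever $|\bm{h}| > \diam(D)/r_2$ the chain $\bm{z}, \bm{z}+\bm{h},\ldots,\bm{z}+r_2\bm{h}$ cannot lie entirely in $D$, so $D_{r_2,\bm{h}}=\emptyset$ and the contribution to the supremum vanishes. Consequently the supremum defining $\omega_{r_2,\bm{x}}(f, I\times D, \diam(D))_p$ is effectively an unrestricted supremum over all of $\R^d$, and under the bijection $\bm{h}\mapsto A^{-1}\bm{h}$ of $\R^d$ it transforms cleanly into the (again effectively unrestricted) supremum defining $\omega_{r_2,\bm{x}}(f\circ\varphi, I'\times D', \diam(D'))_p$. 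The temporal component is treated identically, using that $I_{r_1,h}=\emptyset$ for $|h|>|I|/r_1$. For the averaged versions the same trick works: the integrals over $|\bm{h}|\le\diam(D)$ and $|h|\le |I|$ may be extended to $\R^d$ and $\R$ without change, after which the substitution $\bm{h}=A\bm{h}'$ (resp. $h = ah'$) carries them onto the integrals defining the primed moduli, with the Jacobian factor combining with the normalization exactly as before.

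The main obstacle is this last observation: recognizing that at the critical scale $\diam(D)$ (resp. $|I|$) the supremum and the integral both become insensitive to the specific Euclidean ball and so can be transported under an arbitrary affine map. Once this invariance is identified, every step reduces to routine bookkeeping with the commutation identity and the Jacobian computation.
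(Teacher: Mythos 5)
Your argument for the supremum moduli is correct and, on inspection, is the same as the paper's: both rest on the Jacobi change-of-variables formula for the $L_p$-norm of the shifted difference, together with the observation that at the critical scale $\delta=\diam(D)$ (resp.\ $\delta=|I|$) the supremum is effectively unrestricted because the supremand vanishes outside the difference set of $\overline D$ (resp.\ $\overline I$); the paper phrases this as the linear part of $\varphi_2$ carrying the difference set of $\overline{D'}$ onto that of $\overline D$, which is precisely your ``unrestricted supremum'' observation in different words. Your derivation of \eqref{moduli-1} is likewise the paper's.

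Where your proposal goes wrong is the final sentence, asserting that the averaged version of \eqref{moduli-2} follows ``with the Jacobian factor combining with the normalization exactly as before.'' It does not, for the spatial part. Unlike the supremum, the averaged modulus $w_{r_2,\bm x}$ carries the prefactor $(2\delta)^{-d}$, and this normalization is \emph{not} insensitive to the choice of Euclidean ball. Carrying out the substitution $\bm h = A\bm h'$ at $\delta = \diam(D)$ you find
\begin{equation*}
w_{r_2,\bm x}(f,I\times D,\diam(D))_p^p \;=\; |\det A|\left(\frac{\diam(D')}{\diam(D)}\right)^d|\det D\varphi|\; w_{r_2,\bm x}(f\circ\varphi,I'\times D',\diam(D'))_p^p,
\end{equation*}
so the claimed identity requires $|\det A|^{1/d}=\diam(D)/\diam(D')$. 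This holds when $A$ is a similarity (in particular for the scalings of \eqref{moduli-1}, and automatically for the one-dimensional temporal map $\varphi_1$), but not for a generic invertible affine map; e.g.\ $D'=(0,1)^2$, $A=\mathrm{diag}(2,1)$ gives $|\det A|=2$ while $(\diam(D)/\diam(D'))^2=5/2$. So the ``averaged moduli'' version of \eqref{moduli-2} you assert is false in the spatial direction for general $\varphi_2$; only \eqref{moduli-1} and the temporal half of \eqref{moduli-2} transfer to the averaged moduli. (The paper's own closing sentence is equally unguarded here, but the paper only ever invokes the averaged version of \eqref{moduli-1} and the supremum version of \eqref{moduli-2}, so the gap does not propagate. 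You should not, however, present the false identity as something that ``works exactly as before.'')
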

\begin{proof}
    The first two results \eqref{moduli-1} are an immediate consequence of the Jacobi transformation theorem. The second two in \eqref{moduli-2} follow by the latter together with the fact that 
    \begin{align*}
        \varphi_1\big(\big\{h'\in \R^d \mid h'=t'-\tau', \text{ for } t', \tau'\in \overline{I'}\big\}\big)=\big\{h\in \R^d \mid h=t-\tau, \text{ for } t,\tau\in \overline{I}\big\}.
    \end{align*}
    and
    \begin{align*}
        \varphi_2\big(\big\{\bm{h'}\in \R^d \mid \bm{h'}=\bm{y'}-\bm{z'}, \text{ for } \bm{y'},\bm{z'}\in \overline{D'}\big\}\big)=\big\{\bm{h}\in \R^d \mid \bm{h}=\bm{y}-\bm{z}, \text{ for } \bm{y},\bm{z}\in \overline{D}\big\}.
    \end{align*}
\end{proof}
\begin{rem}
    The corresponding result to \eqref{moduli-2} for the isotropic case has (for example) been used in \cite[proof~of~Thm.~1.4]{DL04}.
\end{rem}

Furthermore, both versions of the moduli are equivalent for small enough $\delta$:

\begin{lem}\label{Lemma equivalence averaged temporal modulus of smoothness}
	Let $p\in(0,\infty]$, $f\in L_p(I\times D)$, and $r\in\N$. Then
	\begin{equation*}
		 w_{r,t}(f, I\times D, \delta)_p\le \omega_{r,t}(f, I\times D, \delta)_p \lesssim_{\,p,r}w_{r,t}(f, I\times D, \delta)_p
	\end{equation*}
	for all $\delta\in[0,\delta_0]$ and $0<\delta_0=\delta_0(r, |I|))=\frac{|I|}{4r}$.
\end{lem}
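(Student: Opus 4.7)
The inequality $w_{r,t}(f,I\times D,\delta)_p\le \omega_{r,t}(f,I\times D,\delta)_p$ is immediate: for every $h$ with $|h|\le\delta$ one has $\|\Delta_{h,t}^r f\|_{L_p(I_{r,h}\times D)}^p\le \omega_{r,t}(f,I\times D,\delta)_p^p$, and averaging over $|h|\le\delta$ preserves this bound. The case $p=\infty$ of the reverse inequality is trivial by the convention adopted in \cref{def:moduli_of_smoothness}, so I assume $0<p<\infty$ from here on.

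For the upper bound, the plan is to prove the pointwise-in-$h_0$ estimate
\begin{equation*}
\|\Delta_{h_0,t}^r f\|_{L_p(I_{r,h_0}\times D)}^{\mu(p)} \lesssim_{p,r} \frac{1}{\delta}\int_{|h|\le\delta}\|\Delta_{h,t}^r f\|_{L_p(I_{r,h}\times D)}^{\mu(p)}\,dh \qquad\text{for all }h_0\in[-\delta,\delta].
\end{equation*}
Taking the supremum over $h_0$ on the left, and applying H\"older's inequality on the right (when $p\ge 1$) or using $\mu(p)=p$ directly (when $p<1$), then converts the right-hand side into $w_{r,t}(f,I\times D,\delta)_p^{\mu(p)}$ up to a constant depending on $p$, yielding $\omega_{r,t}(f,I\times D,\delta)_p\lesssim_{p,r}w_{r,t}(f,I\times D,\delta)_p$.

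To establish the pointwise estimate, I would use the binomial subdivision identity derived from $\Delta_{2h,t}=T_h\Delta_{h,t}+\Delta_{h,t}$ (with $T_h$ the shift by $h$, which commutes with $\Delta_{h,t}$) and a binomial expansion,
\begin{equation*}
\Delta_{h_0,t}^r f(\tau,\bm{z}) = \sum_{k=0}^r\binom{r}{k}\,\Delta_{h_0/2,t}^r f\bigl(\tau+kh_0/2,\bm{z}\bigr),
\end{equation*}
combined with a perturbation $h_0/2\mapsto h_0/2+\eta$ for $\eta$ ranging over an interval of length $\sim\delta$. Averaging over $\eta$ and controlling the perturbation error via $\mu(p)$-subadditivity of $\|\cdot\|_{L_p}$ together with the scaling properties in \cref{Rem:Moduli of smoothness}, one converts the naive pointwise bound $\|\Delta_{h_0,t}^r f\|_p^{\mu(p)}\lesssim_r\|\Delta_{h_0/2,t}^r f\|_p^{\mu(p)}$ (which the identity gives directly) into the desired integral average. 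The restriction $\delta\le |I|/(4r)$ ensures that all shifted arguments $\tau+k(h_0/2+\eta)$ stay in $I$ and that each set $I_{r,h_0/2+\eta}$ retains measure $\gtrsim|I|$, so the change of variables $\eta\mapsto h=h_0/2+\eta$ only costs a constant depending on $p$ and $r$.

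\textbf{Main obstacle.} The delicate step is precisely the promotion of the pointwise identity into a genuine average: the subdivision identity alone relates $\Delta_{h_0,t}^r$ to $\Delta_{h_0/2,t}^r$ at a single step size, not to an integral, and naive iteration does not help since the factor $2^{r\mu(p)}$ would compound. The technical heart of the proof therefore lies in the choice of the perturbation parameter $\eta$ and in the combinatorial bookkeeping needed to absorb the error terms arising from the replacement $h_0/2\mapsto h_0/2+\eta$ into $r$-th order differences with step sizes in $[-\delta,\delta]$, so that the right-hand side truly matches the averaged modulus $w_{r,t}(f,I\times D,\delta)_p^{\mu(p)}$ with constants depending only on $p$ and $r$.
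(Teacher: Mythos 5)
Your reduction steps are sound: the first inequality is immediate by averaging, $p=\infty$ is trivial by the convention, and for $0<p<\infty$ it suffices to show a pointwise-in-$h_0$ estimate bounding $\|\Delta_{h_0,t}^r f\|_{L_p}^{\mu(p)}$ by $\frac{1}{\delta}\int_{|h|\le\delta}\|\Delta_{h,t}^r f\|_{L_p}^{\mu(p)}\,dh$ (with H\"older then giving $w_{r,t}$ when $p\ge 1$). You also correctly identify why $\delta_0=|I|/(4r)$ is needed. However, the core of the argument is missing, and you say so yourself. The subdivision identity $\Delta_{h_0,t}^r f=\sum_{k=0}^r\binom{r}{k}\Delta_{h_0/2,t}^r f(\cdot+kh_0/2,\cdot)$ only relates two fixed step sizes; the perturbation $h_0/2\mapsto h_0/2+\eta$ produces an error $(\Delta_{h_0/2,t}^r-\Delta_{h_0/2+\eta,t}^r)f$ that is not itself an $r$-th order difference, and the combinatorial argument needed to resolve it is exactly what a proof must supply. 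As written, it is not clear the subdivision/perturbation route closes at all.

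The paper follows DeVore--Lorentz and, as carried out explicitly in the spatial analogue (\cref{Lemma equivalence averaged spacial modulus of smoothness}, proved in \cref{Appendix:Proof of Lemma equivalence averaged spacial modulus of smoothness}), relies instead on a different algebraic identity: for any auxiliary step $s$,
\begin{equation*}
\Delta_{h,t}^r f(\tau,\bm{z})=\sum_{l=1}^{r}(-1)^{l}\binom{r}{l}\Bigl(\Delta^r_{ls,t}f(\tau+lh, \bm{z}) - \Delta^r_{h+ls,t}f(\tau, \bm{z})\Bigr).
\end{equation*}
Unlike the subdivision identity, this already writes $\Delta_{h,t}^r f$ as a finite linear combination of $r$-th order differences whose step sizes $ls$ and $h+ls$ can be kept in $[-c\delta,c\delta]$ when $|s|\le\delta/r^2$ and $|h|\le\delta/r^2$; taking $L_p$-norms, raising to the power $\mu(p)$, averaging in $s$, and changing variables $s\mapsto ls$ and $s\mapsto h+ls$ then yields the desired bound by $w_{r,t}(f,I\times D,C\delta)_p^{\mu(p)}$ directly, with the final scaling property from \cref{Rem:Moduli of smoothness} removing the constant in the step size. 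This is the ingredient your sketch is lacking; without it (or an equivalent identity) the proposal is incomplete.
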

\begin{proof}
    Analogous to Lemma 5.1 of \cite[Ch.~6.5]{DL93}.
\end{proof}

\begin{lem}\label{Lemma equivalence averaged spacial modulus of smoothness}
	Let $p\in(0,\infty]$, $f\in L_p(I\times D)$, and $r\in\N$. Then
	\begin{equation*}
		 w_{r,\bm{x}}(f,I\times D, \delta)_p\le \omega_{r,\bm{x}}(f,I\times D, \delta)_p \lesssim_{\,d,p,r,\LipProp(D)} w_{r,\bm{x}}(f,I\times D, \delta)_p
	\end{equation*}
	for all $\delta\in[0,\delta_0]$ and $0<\delta_0=\delta_0(r, \LipDelta(D))=\frac{\LipDelta(D)}{4r}$.
\end{lem}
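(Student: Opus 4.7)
The first inequality $w_{r,\bm{x}}(f, I\times D, \delta)_p \le \omega_{r,\bm{x}}(f, I\times D, \delta)_p$ is immediate: for each admissible $\bm{h}$ with $|\bm{h}|\le \delta$ one has $\|\Delta_{\bm{h},\bm{x}}^r f\|_{L_p(I\times D_{r,\bm{h}})} \le \omega_{r,\bm{x}}(f, I\times D, \delta)_p$, and averaging in $\bm{h}$ preserves this bound in view of Jensen's inequality (or subadditivity for $p<1$).

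For the reverse direction, my plan is to mimic the strategy of Lemma 2.4, replacing the univariate device from \cite[Ch.~6.5,~Lemma~5.1]{DL93} by the multivariate interior cone condition of Remark \ref{Remark Lipschitz domain}\ref{Remark Lipschitz domain - interior cone condition}. Concretely, since $\delta \le \LipDelta(D)/(4r)$, for every $\bm{z}\in D$ one finds $j=j(\bm{z})\in\{1,\dots,\LipCov(D)\}$ with $\bm{z}+\mathcal{C}_j\subset D$ and $\diam(\mathcal{C}_j)\le \LipDelta(D)/4$. I would decompose $D=\bigcup_{j=1}^{\LipCov(D)} D^{(j)}$ according to which rotated cone direction works, and fix $\bm{h}^\ast$ with $|\bm{h}^\ast|\le \delta$.

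The key step is to find a ball $B\subset \mathcal{C}_j$ of radius $c\,\delta$, with $c=c(\mathcal{C},r)$, such that for every $\bm{v}\in B$ and every $\bm{z}\in D^{(j)}_{r,\bm{h}^\ast}$ the intermediate nodes $\bm{z}+i(\bm{h}^\ast-\bm{v})$ and $\bm{z}+i\bm{v}$, $i=0,\ldots,r$, lie inside $D$. This allows the elementary algebraic identity
\[
\Delta_{\bm{h}^\ast,\bm{x}}^{r} f(t,\bm{z}) \;=\; \sum_{k} \alpha_k\, \Delta_{\bm{h}^\ast-\bm{v},\bm{x}}^{r}\bigl[\Delta_{\bm{v},\bm{x}}^{r-k} f(t,\cdot)\bigr](\bm{z}+\bm{z}_k)
\]
(obtained from iterated application of the binomial-type expansion in Remark \ref{Rem:Difference operators}) to be used pointwise. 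Taking the $L_p^{\mu(p)}$-norm in $(t,\bm{z})$, invoking $\mu(p)$-subadditivity, and then averaging $\bm{v}$ over $B$ produces, after the change of variables $\bm{h}:=\bm{h}^\ast-\bm{v}$ (respectively $\bm{h}:=\bm{v}$),
\[
\|\Delta_{\bm{h}^\ast,\bm{x}}^{r} f\|_{L_p(I\times D^{(j)}_{r,\bm{h}^\ast})}^{\mu(p)}
\;\lesssim\; \frac{1}{(c\delta)^d}\int_{|\bm{h}|\le (1+c)\delta}\|\Delta_{\bm{h},\bm{x}}^{r} f\|_{L_p(I\times D_{r,\bm{h}})}^{\mu(p)}\,d\bm{h}.
\]
The right-hand side is $\lesssim w_{r,\bm{x}}(f,I\times D,(1+c)\delta)_p^{\mu(p)}$, and the scaling property in Remark \ref{Rem:Moduli of smoothness}(iv) absorbs the dilation factor $(1+c)$ at the cost of a constant depending only on $d,p,r$ and $c$. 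Summing over the finitely many indices $j=1,\ldots,\LipCov(D)$ and taking the supremum over $\bm{h}^\ast$ gives the claim, with the stated dependence on $d,p,r,\LipProp(D)$.

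The main obstacle will be making the admissibility of the perturbations $\bm{v}$ and $\bm{h}^\ast-\bm{v}$ rigorous uniformly in $\bm{z}\in D^{(j)}_{r,\bm{h}^\ast}$, so that all intermediate nodes in the algebraic identity truly lie inside $D$. This is precisely where the quantitative cone condition $\diam(\mathcal{C})\le \LipDelta(D)/4$ combined with the smallness threshold $\delta\le \LipDelta(D)/(4r)$ is used; choosing $c$ so that $(1+c)r\delta \le \LipDelta(D)/4$ and $B\subset \mathcal{C}$ makes all nodes stay inside $\bm{z}+\mathcal{C}_j\subset D$, and shows that $\delta_0$ can indeed be taken as $\LipDelta(D)/(4r)$.
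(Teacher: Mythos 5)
Your overall strategy is the right one and matches the paper's: reduce to the interior cone condition, decompose the domain by cone direction (the paper actually uses $\LipCov(D)+1$ pieces, the extra one being the far interior $D\setminus D_{\LipDelta(D)/2}$, which you should not omit), rewrite a fixed $r$-th difference via an algebraic identity, and average the perturbation direction over a set of measure $\sim\delta^d$ contained in the cone. However, the algebraic identity you write down is where the argument breaks. First, $\Delta^r_{\bm{h}^*-\bm{v}}\Delta^{r-k}_{\bm{v}}$ is not what the binomial expansion gives: from $\Delta_{\bm{a}+\bm{b}} = T_{\bm{a}}\Delta_{\bm{b}}+\Delta_{\bm{a}}$ one obtains
\begin{equation*}
\Delta^r_{\bm{h}^*}f(\bm{z}) = \sum_{k=0}^r\binom{r}{k}\,\Delta^{r-k}_{\bm{v}}\Delta^k_{\bm{h}^*-\bm{v}}f\bigl(\bm{z}+(r-k)(\bm{h}^*-\bm{v})\bigr),
\end{equation*}
i.e.\ the exponent on $\Delta_{\bm{h}^*-\bm{v}}$ is $k$, not $r$. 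Second, and more seriously, this expansion produces mixed two-direction differences of total order $r$, with each individual factor of order $<r$; bounding such a term by, say, $\|\Delta^k_{\bm{h}^*-\bm{v}}f\|_{L_p}$ (via Remark~\ref{Rem:Difference operators}) yields a difference of order $k<r$, which cannot be absorbed into the averaged $r$-th order modulus $w_{r,\bm{x}}$ — that is precisely the wrong direction relative to Marchaud.

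The paper avoids this by using the transference identity from \cite[Lem.~4.1]{GM14},
\begin{equation*}
\Delta_{\bm{h},\bm{x}}^{r}f(\tau,\bm{z}) \;=\; \sum_{l=1}^{r}(-1)^{l}\binom{r}{l}\Bigl(\Delta^{r}_{l\bm{s},\bm{x}}f(\tau,\bm{z}+l\bm{h}) \;-\; \Delta^{r}_{\bm{h}+l\bm{s},\bm{x}}f(\tau,\bm{z})\Bigr),
\end{equation*}
in which \emph{every} term on the right is an $r$-th difference along a \emph{single} step ($l\bm{s}$ or $\bm{h}+l\bm{s}$). Averaging $\bm{s}$ over the truncated cone $\mathcal{C}_j^t$ then converts the right-hand side into an average of $\|\Delta^r_{\bm{u},\bm{x}}f\|_{L_p}$ over $\bm{u}$ in a set of measure $\sim t^d$ contained in a cube of side $\sim rt$, which is exactly $w_{r,\bm{x}}(f,I\times D,(r+1)t)^p_p$ up to a constant. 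Replacing your identity by this one (and adding the interior piece to the covering of $D_{r,\bm{h}}$) repairs the argument; your choice of the averaging set (a ball $B\subset\mathcal{C}_j$ of radius $c\delta$) is harmless, since only the measure matters. The threshold analysis you give to ensure all intermediate nodes remain in $D$ is correct and in line with the paper's handling of $\delta_0$.
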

\begin{proof}
    Qualitatively, this can be shown as in \cite[Cor.~4.2]{GM14}. Since we state an explicit range for admissible $\delta$, we give a proof in \cref{Appendix:Proof of Lemma equivalence averaged spacial modulus of smoothness} for the readers convenience. 
\end{proof}

\subsection{Marchaud's inequality}

In order to be able to prove the Jackson-type inequality in our setting, we first need a Marchaud-type inequality for our newly defined moduli of smoothness. For the temporal one, this works as in the isotropic case.

\begin{thm}\label{Theorem Marchaud temporal modulus of smoothness}
	Let $r\in\N_{\ge 2}$, $k\in\N$, $k<r$, $f\in L_p(I\times D)$, and $\delta\in [0,\infty)$. Then it holds
	\begin{equation*}
		\omega_{k,t}(f, I\times D, \delta )_p^{\mu(p)}\lesssim_{\,p,k,r}\delta^{k\mu(p)} \left( \|f\|_{L_p(I\times D)}^{\mu(p)} + \int\limits_\delta^\infty \frac{\omega_{r,t}(f, I\times D, s )_p^{\mu(p)}}{s^{k\mu(p)}}\frac{ds}{s}\right).
	\end{equation*}	
\end{thm}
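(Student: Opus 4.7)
The plan is to reduce to the classical one-dimensional Marchaud inequality, treating the spatial variable $\bm z$ as a harmless parameter since $\Delta_{h,t}^{r}$ acts only in time. I would follow the template of \cite[Ch.~2.8]{DL93}, adapted to the $\mu(p)$-subadditive (quasi-)norm setting.

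\emph{Step 1 (Algebraic identity and one-step estimate).} Writing $T_{h,t}f(\tau,\bm z):=f(\tau+h,\bm z)$ one has $\Delta_{h,t}=T_{h,t}-I$ and $\Delta_{2h,t}^k=(T_{h,t}+I)^k\Delta_{h,t}^k$. Expanding the binomial and using the telescoping $T_{h,t}^{j}-I=\Delta_{h,t}\sum_{i=0}^{j-1}T_{h,t}^{i}$, I would derive the pointwise identity
$$2^{k}\,\Delta_{h,t}^{k} f(\tau,\bm z)=\Delta_{2h,t}^{k} f(\tau,\bm z)-\sum_{j=1}^{k}\binom{k}{j}\sum_{i=0}^{j-1}\Delta_{h,t}^{k+1} f(\tau + ih,\bm z).$$
Passing to the $L_p$-quasinorm over the common admissible set, using its $\mu(p)$-subadditivity, and dominating each translated norm $\|\Delta_{h,t}^{k+1}f(\cdot+ih,\cdot)\|_{L_p}$ uniformly by $\omega_{k+1,t}(f,I\times D,h)_p$ gives the one-step estimate
$$\omega_{k,t}(f,I\times D,h)_p^{\mu(p)}\le 2^{-k\mu(p)}\,\omega_{k,t}(f,I\times D,2h)_p^{\mu(p)}+C_{k,p}\,\omega_{k+1,t}(f,I\times D,h)_p^{\mu(p)}.$$

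\emph{Step 2 (Dyadic iteration and lifting the smoothness order).} I would iterate this estimate $N$ times; the residual $2^{-Nk\mu(p)}\omega_{k,t}(f,I\times D,2^Nh)_p^{\mu(p)}$ is controlled by the trivial bound $2^{k\mu(p)}\|f\|_{L_p}^{\mu(p)}$ and tends to zero as $N\to\infty$. Comparing the resulting geometric sum $\sum_{i\ge 0}2^{-(i+1)k\mu(p)}\omega_{k+1,t}(f,I\times D,2^{i}h)_p^{\mu(p)}$ with the integral $h^{k\mu(p)}\int_h^{\infty}\omega_{k+1,t}(f,I\times D,s)_p^{\mu(p)}s^{-k\mu(p)}\,ds/s$ via the monotonicity of $\omega_{k+1,t}(f,I\times D,\cdot)_p$ yields the Marchaud-type bound with $\omega_{k+1,t}$ in place of $\omega_{r,t}$. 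Induction on $r-k$ then lifts the smoothness order: substituting the analogous bound for $\omega_{m,t}$ (with $k<m<r$) into the integrand and applying Fubini via the computation
$$\int_h^\infty s^{(m-k)\mu(p)-1}\!\!\int_s^{\infty}\varphi(u)\,du\,ds\lesssim_{p}\int_h^\infty u^{(m-k)\mu(p)}\varphi(u)\,du$$
with $\varphi(u)=\omega_{m+1,t}(f,I\times D,u)_p^{\mu(p)}u^{-m\mu(p)-1}$ raises the order by one. The $\|f\|_{L_p}^{\mu(p)}$ contribution in the conclusion then appears naturally, partly from the iteration residual and partly from the tail of the integral for $s\gtrsim |I|$, where $\omega_{r,t}(f,I\times D,s)_p\le 2^r\|f\|_{L_p}$ by \cref{Rem:Moduli of smoothness}.

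\emph{Main obstacle.} The only genuinely subtle point is the bookkeeping in Step 1: the pointwise identity holds only on the intersection of the admissible parameter sets $I_{k,h}$, $I_{k,2h}$, and $(I_{k+1,h}-ih)$ for $i=0,\dots,k-1$, which may be strictly smaller than $I_{k,h}$. To recover a bound on the full norm over $I_{k,h}\times D$, I would either take a supremum over $|h'|\le h$ and absorb the discrepancy into the $\omega_{k,t}(f,I\times D,2h)_p$ term on the right-hand side, or argue first with the averaged modulus (where integration over $h$ avoids domain-shrinking issues) and then invoke \cref{Lemma equivalence averaged temporal modulus of smoothness} to transfer the bound to the supremum form. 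The Fubini step in the inductive lift is routine but must be performed carefully to preserve the sharp exponent $k\mu(p)$ throughout.
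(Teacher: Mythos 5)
Your proposal is correct and follows essentially the same route as the paper, which simply invokes the classical Marchaud argument of \cite[Ch.~2.8]{DL93} and remarks that rescaling $I$ to $[0,1]$ (via \cref{lem:Behaviour under affine linear transformations}) makes the constant independent of $|I|$. You carry out that argument explicitly; your identification of the domain-shrinkage subtlety in Step 1 is the right thing to worry about, and passing through the averaged modulus via \cref{Lemma equivalence averaged temporal modulus of smoothness} is the standard and appropriate fix.
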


\begin{proof}
	Analogous to \cite[Ch.~2.8]{DL93}. In particular, scaling from $I\times D$ to $[0,1]\times D$ and again rescaling shows that the constant in the estimate can be chosen independently of $I$, due to the first part of \cref{lem:Behaviour under affine linear transformations}.
\end{proof}

In the spatial direction, it is not possible  to proceed as in the proof of the corresponding isotropic result for higher-dimensional domains as in \cite[Thm.~2]{Dit88}, where $p\ge 1$ was chosen, since this proof relies on extension operators that we do not have available in our setting. Also, we will not use the technical approach from \cite[Thm.~1.21~and~Cor.~1.22]{Dek22} for the case $p\in (0,1)$. Instead, we now state the isotropic Marchaud's inequality in the setting/formulation from \cite[Rem.~4.8]{GM14} and then derive a Marchaud's inequality for our spatial modulus of smoothness using the results from \cref{Subsect: Temporal and spacial moduli of smoothness}. 

\begin{lem}[{\cite[Rem.~4.8]{GM14}}]\label{Theorem Marchaud spacial modulus of smoothness - isotropic}
	Let $p\in(0,\infty]$, $r\in\N_{\ge 2}$, $k\in\N$, $k<r$, $f\in L_p(D)$, and $\delta\in [0,\infty)$. Then
	\begin{align*}
		\omega_{k}(f, D, \delta )_p^{\mu(p)}\lesssim_{\,p,k,r, \LipProp(D)} \delta^{k\mu(p)} \left(\|f\|_{L_p(D)}^{\mu(p)} + \int\limits_\delta^\infty \frac{\omega_{r}(f, D, s )_p^{\mu(p)}}{s^{k\mu(p)}}\frac{ds}{s}\right).
	\end{align*}
\end{lem}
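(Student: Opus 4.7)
The plan is to adapt the classical proof of Marchaud's inequality (as in \cite[Ch.~2.8]{DL93} for the univariate case) to a bounded Lipschitz domain $D$ and to the full range $0<p\le\infty$. Using \cref{Lemma equivalence averaged spacial modulus of smoothness} one may switch freely between the supremum and averaged moduli for $\delta$ below a threshold depending on $\LipDelta(D)$ and $r$, which is useful both for technical flexibility and to confirm that the inequality extends by monotonicity to all $\delta\in[0,\infty)$. The core of the argument is the contraction-type recursion
\begin{equation*}
\omega_k(f,D,\delta)_p^{\mu(p)} \le 2^{-k\mu(p)}\,\omega_k(f,D,2\delta)_p^{\mu(p)} + C\,\omega_r(f,D,\delta)_p^{\mu(p)},
\end{equation*}
valid for all sufficiently small $\delta$, which is then iterated dyadically.

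For the recursion, the algebraic starting point is the identity
\begin{equation*}
\Delta_{2h}^k f(x) = (I+T_h)^k \Delta_h^k f(x) = \sum_{i=0}^{k}\binom{k}{i}\Delta_h^k f(x+ih),
\end{equation*}
where $T_h$ denotes translation by $h$, together with $T_h^i = (I+\Delta_h)^i = \sum_{j=0}^{i}\binom{i}{j}\Delta_h^j$. Suitably combining such identities at step sizes $h,2h,\ldots$ allows one to successively cancel all intermediate-order differences and arrive at an expression of $2^k\Delta_h^k f(x)$ in terms of $\Delta_{2h}^k f(x)$ plus a bounded number of shifted copies of $\Delta_h^r f$, with coefficients depending only on $k$ and $r$. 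Taking $L_p$-quasinorms over the set of $x$ for which all required shifts stay in $D$ then yields the desired recursion, up to boundary corrections.

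The boundary handling is the crucial technical ingredient: the identity requires $x+ih\in D$ for $i$ up to some integer of size $r+k$, which is not automatic on a bounded domain. I would use the interior cone condition (\cref{Remark Lipschitz domain}~(ii)) to partition $D$ into the finite collection of subsets $D^{(j)}:=\{x\in D : x+R_j\mathcal{C}\subset D\}$ for $j=1,\ldots,\LipCov(D)$, and restrict to shifts $h$ lying inside the corresponding rotated cone $\mathcal{C}_j=R_j\mathcal{C}$; then all the translates $x+ih$ with $i\le r+k$ lie inside $D$ as soon as $|h|\le \LipDelta(D)/(8r)$, say. A general $h$ with $|h|\le\delta$ either lies in one of these cones (after possibly replacing $h$ by $-h$) or can be decomposed into at most a bounded number of cone-compatible shifts, costing only a constant factor depending on $\LipCov(D)$, $\Lip(D)$, and $\LipDelta(D)/\diam(D)$ in the implied constant.

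Once the recursion is in place, the rest is routine: iterating with $\delta_j=2^j\delta$ up to $\delta_N\sim\diam(D)$, combining with the trivial bound $\omega_k(f,D,s)_p^{\mu(p)}\le 2^{k\mu(p)}\|f\|_{L_p(D)}^{\mu(p)}$ (cf.~\cref{Rem:Moduli of smoothness}), and recognising the resulting dyadic sum as a (lower Riemann) sum for the integral $\int_\delta^\infty s^{-k\mu(p)}\omega_r(f,D,s)_p^{\mu(p)}\,ds/s$ via monotonicity of $\omega_r(f,D,\cdot)_p$, yields the claimed estimate. The main obstacle throughout is precisely this geometric bookkeeping: on $\R^d$ the identity translates into a norm recursion without any loss, whereas on a Lipschitz domain one must quantify the mismatch between the admissible-shift sets $D_{k,h}$ and their translates and carefully propagate the dependencies on $\Lip(D)$, $\LipCov(D)$, and $\LipDelta(D)/\diam(D)$ through all constants---particularly delicate as $p\to 0$, where the subadditivity exponent $\mu(p)$ degrades and the triangle-inequality swallowing arguments become more sensitive.
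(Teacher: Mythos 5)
First note that the paper does not prove this lemma itself: it is quoted directly from \cite[Rem.~4.8]{GM14}, so there is no in-paper argument to compare against. Your sketch follows a classical Marchaud route and its algebraic core is correct: from $\Delta_{2h}^k=(I+T_h)^k\Delta_h^k$, expanding $T_h^i=(I+\Delta_h)^i$ yields $2^k\Delta_h^k f=\Delta_{2h}^k f-\sum_{j=1}^{k}c_j\Delta_h^{k+j}f$, which gives the contraction recursion directly when $r=k+1$ and can be chained for larger $r-k$; the dyadic iteration and the conversion of the resulting geometric sum into $\delta^{k\mu(p)}\int_\delta^\infty s^{-k\mu(p)-1}\omega_r(s)^{\mu(p)}\,ds$ via monotonicity of $\omega_r$ are also standard.

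The genuine gap lies in the boundary handling, precisely the obstacle you flag at the end. You partition $D$ into the cone-compatible sets $D^{(j)}$ and restrict shifts to $\mathcal{C}_j$, then claim that a general $h$ with $|h|\le\delta$ either lies in a cone or ``can be decomposed into at most a bounded number of cone-compatible shifts, costing only a constant factor.'' This step is not justified and, as stated, does not go through. The cones have opening angle strictly less than $\pi$, so a typical $h$ lies in none of them even after negation; and if one writes $h=s_1-s_2$ with $s_1,s_2\in\mathcal{C}_j$, then $\Delta_h^k=T_{-ks_2}(\Delta_{s_1}-\Delta_{s_2})^k$, whose expansion produces mixed differences $\Delta_{s_1}^i\Delta_{s_2}^{k-i}$ with additional translates; these neither reduce to a single-step $\omega_k$ or $\omega_r$ nor live on all of $D_{k,h}$, so there is always a boundary layer $D_{k,h}\setminus D_{2k,h}$ that the algebraic identity fails to control in the supremum modulus. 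The standard fix, which you already point toward when citing \cref{Lemma equivalence averaged spacial modulus of smoothness}, is to prove the contraction for the \emph{averaged} moduli $w_k,w_r$, where averaging over cone shifts (via an identity such as the one in \cref{Appendix:Proof of Lemma equivalence averaged spacial modulus of smoothness}) absorbs the boundary layer because the set of problematic pairs $(x,h)$ has small measure, and only afterwards transfer back to $\omega_k,\omega_r$ via that equivalence lemma. Merely invoking the equivalence ``for flexibility'' without actually routing the recursion through $w_k,w_r$ leaves the crucial step unproved.
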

\begin{rem}
    The modulus $\omega_r(\cdot, D, \cdot)_{p}$ above is defined like our spatial modulus of smoothness from \cref{Subsect: Temporal and spacial moduli of smoothness} except for the missing time dependency. In the same manner, we will also use the averaged version $w_r(\cdot, D, \cdot)_{p}$, which is defined accordingly. Also, for these moduli, \cref{Lemma equivalence averaged spacial modulus of smoothness} holds correspondingly.
\end{rem}

Now the Marchaud inequality for the spatial modulus of smoothness can be stated as follows.

\begin{thm}\label{Theorem Marchaud spacial modulus of smoothness - anisotropic}
	Let $p\in(0,\infty]$, $r\in\N_{\ge 2}$, $k\in\N$, $k<r$, $f\in L_p(I\times D)$, and $\delta\in [0,\infty)$. Then
	\begin{align*}
		\omega_{k,\bm{x}}(f, & I\times D, \delta )_p^{\mu(p)} \lesssim_{\,d,p,k,r, \LipProp(D)} \delta^{k\mu(p)} \left( \|f\|_{L_p(I\times D)}^{\mu(p)} + \int\limits_\delta^\infty \frac{\omega_{r,\bm{x}}(f, I\times D, s )_p^{\mu(p)}}{s^{k\mu(p)}}\frac{ds}{s}\right).
	\end{align*}	
\end{thm}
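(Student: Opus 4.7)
The plan is to reduce the anisotropic spacial Marchaud inequality on $I\times D$ to the isotropic Marchaud inequality of \cref{Theorem Marchaud spacial modulus of smoothness - isotropic} applied pointwise to the time-slices $f(t,\cdot)$. The averaged modulus of smoothness $w_{r,\bm{x}}$ will serve as the bridge between the two, since it respects a clean Fubini identity that the supremum modulus $\omega_{r,\bm{x}}$ does not.

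First, I will reduce to $\delta \le \delta_0 := \LipDelta(D)/(4\max(k,r))$: for larger $\delta$, the trivial bound $\omega_{k,\bm{x}}(f,I\times D,\delta)_p^{\mu(p)} \le 2^k \|f\|_{L_p(I\times D)}^{\mu(p)}$ from \cref{Rem:Difference operators} is dominated by $(2^k/\delta_0^{k\mu(p)})\,\delta^{k\mu(p)}\|f\|_{L_p(I\times D)}^{\mu(p)}$, which is already of the desired form (a preliminary rescaling to $\diam(D)=1$ via \cref{lem:Behaviour under affine linear transformations} ensures that $\delta_0$, and hence the resulting constant, is controlled by $\LipProp(D)$). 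For $\delta \in (0,\delta_0]$, \cref{Lemma equivalence averaged spacial modulus of smoothness} gives $\omega_{k,\bm{x}}(f,I\times D,\delta)_p^{\mu(p)} \lesssim w_{k,\bm{x}}(f,I\times D,\delta)_p^{\mu(p)}$, and the Fubini identity
\begin{align*}
    w_{k,\bm{x}}(f,I\times D,\delta)_p^p = \int_I w_k(f(t,\cdot),D,\delta)_p^p\,dt
\end{align*}
combined with $w_k \le \omega_k$ and \cref{Theorem Marchaud spacial modulus of smoothness - isotropic} applied slice-by-slice will yield a pointwise-in-$t$ estimate of $w_k(f(t,\cdot),D,\delta)_p^{\mu(p)}$ in terms of $\|f(t,\cdot)\|_{L_p(D)}^{\mu(p)}$ and an $s$-integral involving $\omega_r(f(t,\cdot),D,s)_p^{\mu(p)}$.

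Integrating back over $t$ (using $p$-subadditivity when $0<p\le 1$, so that $p/\mu(p)=1$, and Minkowski's integral inequality together with $(a+b)^p\le 2^{p-1}(a^p+b^p)$ when $p\ge 1$) reduces the problem to bounding $\int_I \omega_r(f(t,\cdot),D,s)_p^p\,dt$ by a constant multiple of $\omega_{r,\bm{x}}(f,I\times D,s)_p^p$. Here I will split the $s$-range at $\delta_0$: for $s \le \delta_0$, another slice-wise application of \cref{Lemma equivalence averaged spacial modulus of smoothness} together with Fubini yields
\begin{align*}
\int_I \omega_r(f(t,\cdot),D,s)_p^p\,dt \lesssim \int_I w_r(f(t,\cdot),D,s)_p^p\,dt = w_{r,\bm{x}}(f,I\times D,s)_p^p \le \omega_{r,\bm{x}}(f,I\times D,s)_p^p;
\end{align*}
for $s > \delta_0$, the pointwise bound $\omega_r(f(t,\cdot),D,s)_p^{\mu(p)} \le 2^r \|f(t,\cdot)\|_{L_p(D)}^{\mu(p)}$ of \cref{Rem:Difference operators} controls the integrand, and the resulting tail $\delta^{k\mu(p)}\int_{\delta_0}^\infty s^{-k\mu(p)-1}\,ds$ is finite and absorbed into the $\delta^{k\mu(p)}\|f\|_{L_p(I\times D)}^{\mu(p)}$ term. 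The case $p=\infty$ proceeds analogously, with $\esssup_t$ in place of the $L_p(I)$-norm and the averaged-versus-supremum distinction becoming vacuous by definition.

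The main obstacle is precisely this wrong-direction Fubini comparison: naive slicing gives only $\omega_{r,\bm{x}}(f,I\times D,s)_p^p \le \int_I \omega_r(f(t,\cdot),D,s)_p^p\,dt$, which is the converse of what is needed to close the argument. Passing through the averaged modulus, where Fubini becomes an equality, and then exchanging back via \cref{Lemma equivalence averaged spacial modulus of smoothness}, circumvents this — but the equivalence is only legitimate for arguments below the threshold $\delta_0$, which is what forces the case split both in $\delta$ at the start and in $s$ inside the Marchaud tail integral.
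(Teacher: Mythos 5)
Your proposal is correct and follows essentially the same route as the paper's proof: slice the space-time domain in time, apply the isotropic Marchaud inequality (Lemma~\ref{Theorem Marchaud spacial modulus of smoothness - isotropic}) pointwise to the slices $f_\tau:=f(\tau,\cdot)$, integrate back in $\tau$ using $p$-subadditivity or Minkowski, and close the loop via the Fubini identity for the averaged modulus combined with Lemma~\ref{Lemma equivalence averaged spacial modulus of smoothness}, splitting the $s$-range at a threshold $\delta_0$. The one place you diverge is the handling of the \emph{left-hand side}: you pass $\omega_{k,\bm{x}}$ through $w_{k,\bm{x}}$ via Lemma~\ref{Lemma equivalence averaged spacial modulus of smoothness}, which forces you to open the argument with a preliminary case split in $\delta$ (trivial bound for $\delta>\delta_0$, equivalence for $\delta\le\delta_0$). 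The paper avoids this entirely by the one-line observation
\begin{equation*}
\omega_{k,\bm{x}}(f,I\times D,\delta)_p^p
= \sup_{|\bm{h}|\le\delta}\int_I \|\Delta_{\bm{h}}^k f_\tau\|_{L_p(D_{k,\bm{h}})}^p\,d\tau
\le \int_I \sup_{|\bm{h}|\le\delta}\|\Delta_{\bm{h}}^k f_\tau\|_{L_p(D_{k,\bm{h}})}^p\,d\tau
= \int_I \omega_k(f_\tau,D,\delta)_p^p\,d\tau,
\end{equation*}
valid for all $\delta\ge 0$. Note that this is precisely the ``naive slicing'' inequality you mention in your final paragraph and dismiss as being in the wrong direction --- but it is the \emph{wrong} direction only for the $\omega_r$-terms inside the Marchaud tail (where one needs $\int_I\omega_r(f_\tau,\cdot)_p^p\,d\tau \lesssim \omega_{r,\bm{x}}(f,\cdot)_p^p$, hence the detour via $w_r$); for the $\omega_k$-term on the left it is exactly what is needed. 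Using it directly renders the initial reduction to $\delta\le\delta_0$, and the resulting extra case, unnecessary, so your proof is correct but carries one redundant case split that the paper's version streamlines away.
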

\begin{proof}
    First, we consider the case $p<\infty$. Since the estimate which we want to prove remains invariant under scalings, due to the first part of \cref{lem:Behaviour under affine linear transformations}, which, in particular, does not change the Lipschitz properties of $D$, we can assume $\LipDelta(D)=1$ without loss of generality. For $\tau\in I$, we define $f_\tau:=f(\tau,\cdot)$. Then $f_\tau\in L_p(D)$ almost everywhere on $I$ such that we can calculate
    \begin{align*}
		\omega_{k,\bm{x}}(f, I\times D, \delta)_p^{\mu(p)}&= \left(\sup\limits_{|\bm{h}|\le \delta}\int\limits_I \|\Delta^k_{\bm{h}}f_\tau\|^p_{L_p(D_{k,\bm{h}})}\,d\tau\right)^\frac{\mu(p)}{p}
		\\&\le \left(\int\limits_I \sup\limits_{|\bm{h}|\le \delta} \|\Delta^k_{\bm{h}}f_\tau\|^p_{L_p(D_{k,\bm{h}})}\,d\tau\right)^\frac{\mu(p)}{p}
		\\&= \left(\int\limits_I \left(\omega_{k}(f_\tau, D, \delta)_p^{\mu(p)}\right)^\frac{p}{\mu(p)} \,d\tau\right)^\frac{\mu(p)}{p}.
	\end{align*}
    Next, we can use this together with \cref{Theorem Marchaud spacial modulus of smoothness - isotropic} to derive
    \begin{equation}\label{Theorem Marchaud spacial modulus of smoothness - anisotropic proof step 2}
    \begin{split}
	\omega_{k,\bm{x}}(f, I\times D, \delta)_p^{\mu(p)}&\lesssim_{\,p,k,r,\LipProp(D)} \delta^{k\mu(p)} \left(\int\limits_I \left(\|f_\tau\|_{L_p(D)}^{\mu(p)} + \int\limits_\delta^\infty \frac{\omega_{r}(f_\tau, D, s )_p^{\mu(p)}}{s^{k\mu(p)}}\frac{ds}{s}\right)^\frac{p}{\mu(p)} \,d\tau\right)^\frac{\mu(p)}{p}
		\\&\lesssim_{\, p} \delta^{k\mu(p)} \left(\int\limits_I \|f_{\tau}\|_{L_p(D)}^{p} + \left( \int\limits_\delta^\infty \frac{\omega_{r}(f_\tau, D, s )_p^{\mu(p)}}{s^{k\mu(p)}}\frac{ds}{s}\right)^\frac{p}{\mu(p)} \,d\tau\right)^\frac{\mu(p)}{p}
		\\  &= \delta^{k\mu(p)}  \left(\|f\|_{L_p(I\times D)}^{p} + \int\limits_I\left( \int\limits_\delta^\infty \frac{\omega_{r}(f_\tau, D, s )_p^{\mu(p)}}{s^{k\mu(p)}}\frac{ds}{s}\right)^\frac{p}{\mu(p)} \,d\tau\right)^\frac{\mu(p)}{p}
    \\
    &\lesssim_{\, p}  \delta^{k\mu(p)} \left(\|f\|_{L_p(I\times D)}^{\mu(p)} + \left(\int\limits_I\left( \int\limits_\delta^\infty \frac{\omega_{r}(f_\tau, D, s )_p^{\mu(p)}}{s^{k\mu(p)}}\frac{ds}{s}\right)^\frac{p}{\mu(p)} \,d\tau\right)^\frac{\mu(p)}{p} \right).
    \end{split}
\end{equation}
Since $\frac{p}{\mu(p)}\ge 1$, we can use Minkowski's inequality for integrals to get
\begin{equation}\label{Theorem Marchaud spacial modulus of smoothness - anisotropic proof step 3}
\begin{split}
	\Bigg(\int\limits_I \bigg( \int\limits_\delta^\infty &\frac{\omega_{r}(f_\tau, D, s )_p^{\mu(p)}}{s^{k\mu(p)}}\frac{ds}{s}\bigg)^\frac{p}{\mu(p)} \,d\tau\Bigg)^\frac{\mu(p)}{p} 
	\le \int\limits_\delta^\infty \Bigg(\int\limits_I \frac{\omega_{r}(f_\tau, D, s )_p^{p}}{s^{kp}}\,d\tau \Bigg)^\frac{\mu(p)}{p}\frac{ds}{s}
	\\
    &\le \int\limits_{\min(\delta, \delta_0)}^{\delta_0}  \Bigg(\int\limits_I  \omega_{r}(f_\tau, D, s )_p^{p}\,d\tau \Bigg)^\frac{\mu(p)}{p}\frac{ds}{s^{k\mu(p)+1}} + \int\limits^{\infty}_{\delta_0} \left(\int\limits_I \omega_{r}(f_\tau, D, s )_p^{p}\,d\tau \right)^\frac{\mu(p)}{p}\frac{ds}{s^{k\mu(p)+1}}.
\end{split}
\end{equation}
    For $s\le \delta_0(r):=\frac{1}{4r}$, \cref{Lemma equivalence averaged spacial modulus of smoothness} gives
\begin{align}\label{Theorem Marchaud spacial modulus of smoothness - anisotropic proof step 4}
	\left(\int\limits_I \omega_{r}(f_\tau, D, s )_p^{p}\,d\tau \right)^\frac{\mu(p)}{p} &\lesssim_{\,d,p,r,\LipProp(D)}  \left(\int\limits_I w_{r}(f_\tau, D, s )_p^{p}\,d\tau \right)^\frac{\mu(p)}{p} = w_{r,\bm{x}}(f, I\times D, s)_p^{\mu(p)}. 
\end{align}

In the last step, we have used the following identity 
\begin{align*}
	\int\limits_I w_{r}(f_\tau, D, s )_p^{p}\,dt =  &\frac{1}{(2\delta)^d} \int\limits_{\{|\bm{h}|\le s\}}\int\limits_I \|\Delta^r_{\bm{h}}f_\tau\|_{L_p(D_{r, \bm{h}})}^p \,d\tau \,d\bm{h}
	\\=  &\frac{1}{(2\delta)^d} \int\limits_{\{|\bm{h}|\le s\}}\int\limits_I \int\limits_{D_{r,\bm{h}}} |\Delta^r_{\bm{h}}f_\tau(\bm{z})|^p \,d\bm{z}\,d\tau \,d\bm{h}
	\\ =& \frac{1}{(2\delta)^d} \int\limits_{\{|\bm{h}|\le s\}}\int\limits_{I\times D_{r, \bm{h}}}|\Delta^r_{\bm{h},\bm{x}} f(\tau,\bm{z})|^p\,d(\tau,\bm{z})\,d\bm{h}
	\\ = & \frac{1}{(2\delta)^d} \int\limits_{\{|\bm{h}|\le s\}} \|\Delta^r_{\bm{h},\bm{x}}f\|_{L_p(I\times D_{r, \bm{h}})}^p \, d\bm{h} = w_{r,\bm{x}}(f, I\times D, s)_p^p, 
\end{align*}
where we have applied Fubini's theorem in the first and third step, and the identity \mbox{$\Delta^r_{\bm{h}}f_{\tau}(\bm{z})=\Delta^r_{\bm{h},\bm{x}} f(\tau,\bm{z})$} in the third step as well. Furthermore, with the estimate $\omega_{r}(f_\tau, D, s )_p\lesssim_{\,p,r} \|f_\tau\|_{L_p(D)}$ we derive
\begin{align}\label{Theorem Marchaud spacial modulus of smoothness - anisotropic proof step 5}
	\int\limits^{\infty}_{\delta_0} \left(\int\limits_I \omega_{r}(f_\tau, D, s )_p^{p}\,d\tau \right)^\frac{\mu(p)}{p}\frac{ds}{s^{k\mu(p)+1}}&\lesssim_{\, p,r}\|f\|_{L_p(I\times D)}^{\mu(p)} \int\limits_{\delta_0}^{\infty}\frac{ds}{s^{k\mu(p)+1}}\lesssim_{\, p,k,r}\|f\|_{L_p(I\times D)}^{\mu(p)}.
\end{align}
Combining \eqref{Theorem Marchaud spacial modulus of smoothness - anisotropic proof step 2}, \eqref{Theorem Marchaud spacial modulus of smoothness - anisotropic proof step 3}, \eqref{Theorem Marchaud spacial modulus of smoothness - anisotropic proof step 4}, and \eqref{Theorem Marchaud spacial modulus of smoothness - anisotropic proof step 5} we arrive at the assertion for $p<\infty$. The case $p=\infty$ works similarly with the usual changes; mainly exchanging some integrals with essential suprema and setting $\frac{p}{\mu(p)}$ to $1$. 
\end{proof}

\subsection{Jackson-type estimate}\label{subsect:Jackson-type estimate}

In this subsection, we proceed similar to \cite[Ch.~2]{DL04}. The first crucial part is to extend the construction of the (spatial) step function from \cite[Lem.~2.8]{DL04} on convex domains to arbitrary Lipschitz domains using the cone property from \cref{Remark Lipschitz domain}\ref{Remark Lipschitz domain - interior cone condition} while simultaneously adding a time-dependency. The idea of this application of the cone property (in space) has been roughly sketched in \cite[Lem.~4.4]{GM14} with respect to a certain subclass of Lipschitz domains, there called \enquote{reference stars}. The second part is to extend the domain convergence technique from \cite[Def.~2.11~and~Lem.~2.12]{DL04} from convex domains to bounded Lipschitz domains using \cref{lem:convergence_of_Lipschitz_domains}, which was applied in \cite[Thm.~1.4]{DL04} for proving the case $0<p<1$. Together, these will allow us  to establish \cref{thm:Jackson}, i.e., provide a uniform time-dependent Jackson's estimate for arbitrary Lipschitz domains sharing the same Lipschitz properties, not only convex ones. \\

We start  studying approximation with constants, i.e., polynomials of order $(1,1)$.

\begin{lem}\label{Jackson for r_1=r_2=1 lemma}
	Let $p\in (0,\infty]$ and $f\in L_p(I\times D)$. Then there is a constant $c\in \R$ such that
	\begin{align*}
		\|f - c|&|_{L_p(I\times D)}\lesssim_{\,p} \displaystyle\left(\frac{1}{|I|}\int\limits_{\{|h|\le|I|\}}\|\Delta_{h,t}^1\|_{L_p(I_{1,h}\times D)}^p\, dh + \frac{1}{|D|}\displaystyle \int\limits_{\{|\bm{h}|\le \diam(D)\}}\|\Delta_{\bm{h},\bm{x}}^1\|_{L_p(I\times D_{1,\bm{h}})}^p\, d\bm{h}\right)^\frac{1}{p}
	\end{align*}
	for $p\in (0,\infty)$ and 
	\begin{align*}
		\|f - c\|_{L_\infty(I\times D)} \le \omega_{1,t}(f,I\times D,|I|)_\infty  + \omega_{1,\bm{x}}(f,I\times D,\diam(D))_\infty
	\end{align*}		
	for $p=\infty$.
\end{lem}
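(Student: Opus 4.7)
The plan is to construct $c$ as the pointwise value of $f$ at a suitably chosen point $(\tau_0,\bm{z}_0)\in I\times D$, the point being selected by an averaging argument. For $p=\infty$, I would fix any $(\tau_0,\bm{z}_0)\in I\times D$ and set $c:=f(\tau_0,\bm{z}_0)$; inserting the intermediate value $f(\tau_0,\bm{z})$ then gives
\[
|f(\tau,\bm{z})-c|\le |f(\tau,\bm{z})-f(\tau_0,\bm{z})|+|f(\tau_0,\bm{z})-f(\tau_0,\bm{z}_0)|
\]
for almost every $(\tau,\bm{z})\in I\times D$, and since $|\tau_0-\tau|\le|I|$ and $|\bm{z}_0-\bm{z}|\le\diam(D)$, the two terms are dominated by $\omega_{1,t}(f,I\times D,|I|)_\infty$ and $\omega_{1,\bm{x}}(f,I\times D,\diam(D))_\infty$ respectively, yielding the $p=\infty$ claim.

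For $p\in(0,\infty)$ I would employ the same idea, but now choose $(\tau_0,\bm{z}_0)$ by averaging. After fixing a Borel representative of $f$, define
\[
\Phi(\tau_0,\bm{z}_0):=\int_{I\times D}|f(\tau,\bm{z})-f(\tau_0,\bm{z}_0)|^p\,d(\tau,\bm{z}).
\]
If I can bound $\tfrac{1}{|I\times D|}\int_{I\times D}\Phi(\tau_0,\bm{z}_0)\,d(\tau_0,\bm{z}_0)$ by a $p$-dependent constant times the RHS of the lemma raised to the $p$-th power, then there must exist $(\tau_0^*,\bm{z}_0^*)\in I\times D$ with $\Phi(\tau_0^*,\bm{z}_0^*)$ no larger than this average. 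Setting $c:=f(\tau_0^*,\bm{z}_0^*)$ yields the desired estimate, because $\|f-c\|_{L_p(I\times D)}^p=\Phi(\tau_0^*,\bm{z}_0^*)$. The attraction of this framing is that it works uniformly in $p\in(0,\infty)$ and sidesteps the obstruction that Jensen's inequality points the wrong way for $p<1$, which would spoil a more direct approach based on the naive choice $c:=|I\times D|^{-1}\int f$.

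To bound the average of $\Phi$, I would use the chain decomposition via the intermediate value $f(\tau_0,\bm{z})$, namely
\[
f(\tau,\bm{z})-f(\tau_0,\bm{z}_0)=-\Delta^1_{\tau_0-\tau,t}f(\tau,\bm{z})-\Delta^1_{\bm{z}_0-\bm{z},\bm{x}}f(\tau_0,\bm{z}),
\]
combined with the elementary inequality $|a+b|^p\le C_p(|a|^p+|b|^p)$, where $C_p:=\max(1,2^{p-1})$. Integrating the two resulting summands over $(\tau,\bm{z},\tau_0,\bm{z}_0)\in(I\times D)^2$ and reordering by Fubini, the first summand is independent of $\bm{z}_0$, so integration in $\bm{z}_0$ produces the factor $|D|$; the substitution $h=\tau_0-\tau$, which bijects $\{(\tau,\tau_0)\in I^2\}$ onto $\{(\tau,h):|h|\le|I|,\,\tau\in I_{1,h}\}$, then turns the remainder into $|D|\int_{|h|\le|I|}\|\Delta^1_{h,t}f\|^p_{L_p(I_{1,h}\times D)}\,dh$. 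The second summand is treated symmetrically, with the roles of time and space swapped, producing $|I|\int_{|\bm{h}|\le\diam(D)}\|\Delta^1_{\bm{h},\bm{x}}f\|^p_{L_p(I\times D_{1,\bm{h}})}\,d\bm{h}$. Dividing by $|I\times D|=|I||D|$ produces exactly the two quantities on the right-hand side of the claim, multiplied by $C_p$, and taking $p$-th roots finishes the argument.

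The main obstacles I anticipate are essentially technical: justifying the Borel representative and the measure-theoretic selection of $(\tau_0^*,\bm{z}_0^*)$ from the set where $\Phi$ is finite and at most the average, and verifying that the changes of variables do produce the sets $I_{1,h}$ and $D_{1,\bm{h}}$ --- which follows directly from the definitions, since $h=\tau_0-\tau$ with $\tau,\tau_0\in I$ is exactly $\tau\in I_{1,h}$ and $|h|\le|I|$, and analogously in space. I expect no deeper analytic difficulty.
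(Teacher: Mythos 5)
Your proposal takes essentially the same approach as the paper: you select $c$ by the same averaging argument on $\Phi$ (the paper's $\varphi$), use the same triangle‐inequality splitting with an intermediate point together with $|a+b|^p\lesssim_p|a|^p+|b|^p$, and close via the same Fubini and change of variables; the only difference is the mirror‐image choice of intermediate value ($f(\tau_0,\bm{z})$ versus the paper's $f(\tau,\tilde{\bm z})$), which is immaterial. The one small fix needed is in the $p=\infty$ case: setting $c:=f(\tau_0,\bm{z}_0)$ for an \emph{arbitrary} fixed point is not safe since $f$ is an $L_\infty$ class and a bad representative may misbehave at the chosen point; the paper instead takes any $c$ with $\essinf f\le c\le\esssup f$, which avoids pointwise evaluation and makes the essential‐supremum argument go through cleanly, and you should adopt that choice (or restrict to a Lebesgue point).
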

\begin{proof}
	We first consider the case $p<\infty$ and define
	\begin{equation*}
		\varphi:I\times D\rightarrow \R, \,\, \varphi(s,\bm{y}):= \int\limits_{I\times D} |f(\tau,\bm{z})-f(s,\bm{y})|^p\,d(\tau,\bm{z}).
	\end{equation*}
	Then it exists $(s, \bm{y})\in I\times D$, such that
	\begin{equation*}
		\varphi(s,\bm{y})\le \frac{1}{|I\times D|}\int\limits_{I\times D} \varphi(\tau,\bm{z})\, d(\tau,\bm{z}).
	\end{equation*}
	We set $c:= f(s,\bm{y})$ and derive 
	\begin{align*}
		\|f-c\|^p_{L_p(I\times D)} = \varphi(s,\bm{y}) &\le \frac{1}{|I\times D|}\int\limits_{I\times D} \varphi(\tau,\bm{z})\, d(\tau,\bm{z})
		\\&= \frac{1}{|I\times D|}\int\limits_{I\times D}\int\limits_{I\times D} |f(\tilde{\tau}, \tilde{\bm{z}})-f(\tau,\bm{z})|^p\, d(\tau,\bm{z})\, d(\tilde{\tau}, \tilde{\bm{z}})
		\\&\lesssim_{\,p} \frac{1}{|I\times D|}\int\limits_{I\times D}\int\limits_{I\times D} |f(\tilde{\tau}, \tilde{\bm{z}})-f(\tau,\tilde{\bm{z}})|^p\, d(\tau,\bm{z})\, d(\tilde{\tau}, \tilde{\bm{z}})
		\\&\quad\quad +\: \frac{1}{|I\times D|}\int\limits_{I\times D}\int\limits_{I\times D} |f(\tau, \tilde{\bm{z}})-f(\tau,\bm{z})|^p\, \, d(\tau,\bm{z})\, d(\tilde{\tau}, \tilde{\bm{z}})
		\\&:= (I) + (II).
	\end{align*}
	
	The integrand of $(I)$ is independent of $\bm{z}$, thus with Fubini's theorem we can conclude
	\begin{align*}
		(I) = \frac{|D|}{|I\times D|} \int\limits_{I\times D} \int\limits_I|\Delta^1_{\tilde{\tau}-\tau, t}f(\tau,\tilde{\bm{z}})|^p\, d\tau \, d(\tilde{\tau},\tilde{\bm{z}})\, = \frac{1}{|I|}\int\limits_{\{|h|\le |I|\}}\|\Delta_{h,t}^1 f\|^p_{L_p(I_{1,h}\times D)} \,d\bm{h},
	\end{align*}		
	since $|I\times D|= |I\|D|$. Analogously, the integrand of $(II)$ is independent from $\tilde{\tau}$, which shows the assertion for $p<\infty $. Moreover, for $p=\infty$ and $c\in \R$ with $\essinf\limits_{(s,\bm{y})\in I\times D} f(s,\bm{y})\le c \le \esssup\limits_{(s,\bm{y})\in I\times D} f(s,\bm{y})$ , we can estimate similarly when we replace the integrals by (essential) suprema.
\end{proof}

\begin{lem}\label{Jackson for r_1=r_2=1}
 \cref{thm:Jackson} holds true for $(r_1, r_2)=(1,1)$. In particular, for this choice of parameters there is no dependency on $\LipCov(D)$.
\end{lem}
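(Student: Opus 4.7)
The plan is to derive the required polynomial (which, for $(r_1,r_2)=(1,1)$, is simply a constant in $\Pi^{1,1}_{t,\bm{x}}(I\times D)$) directly from \cref{Jackson for r_1=r_2=1 lemma} and then compare the integral averages appearing there with the supremum-form moduli in \cref{thm:Jackson}. For $p=\infty$ there is nothing to do: the estimate of \cref{Jackson for r_1=r_2=1 lemma} is literally \eqref{thm:Jackson - equation} with constant $1$, and no Lipschitz properties of $D$ enter at all.

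For $p\in(0,\infty)$ I apply \cref{Jackson for r_1=r_2=1 lemma} to obtain a constant $c\in\R$ and then bound each integrand by its supremum over the corresponding integration range. Using that $\{|h|\le|I|\}$ has length $2|I|$ and $\{|\bm{h}|\le\diam(D)\}$ has $d$-dimensional volume $c_d\,\diam(D)^d$ with $c_d:=|B_1(0)|$, this yields
\begin{align*}
    \frac{1}{|I|}\int\limits_{\{|h|\le|I|\}}\|\Delta^1_{h,t}f\|^p_{L_p(I_{1,h}\times D)}\,dh &\;\le\; 2\,\omega_{1,t}(f,I\times D,|I|)_p^p,\\
    \frac{1}{|D|}\int\limits_{\{|\bm{h}|\le\diam(D)\}}\|\Delta^1_{\bm{h},\bm{x}}f\|^p_{L_p(I\times D_{1,\bm{h}})}\,d\bm{h} &\;\le\; c_d\,\frac{\diam(D)^d}{|D|}\,\omega_{1,\bm{x}}(f,I\times D,\diam(D))_p^p.
\end{align*}

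The only remaining task is to control the geometric ratio $\diam(D)^d/|D|$ by elements of $\LipProp(D)$. For this I invoke the interior cone condition from \cref{Remark Lipschitz domain}: $D$ contains a translate of a cone $\mathcal{C}$ of diameter $\le\LipDelta(D)/4$ whose shape depends only on $\Lip(D)$. A short scale-invariance inspection of the explicit construction given in \cref{Appendix:Proof of Lemma equivalence averaged spacial modulus of smoothness} (both the height and base radius of $\mathcal{C}$ scale like $\LipDelta(D)$, with the opening angle controlled by $\Lip(D)$) produces the estimate $|\mathcal{C}|\gtrsim_{\,d,\Lip(D)}\LipDelta(D)^d$, and hence
\begin{align*}
    \frac{\diam(D)^d}{|D|}\;\le\;\frac{\diam(D)^d}{|\mathcal{C}|}\;\lesssim_{\,d,\Lip(D)}\;\left(\frac{\diam(D)}{\LipDelta(D)}\right)^d,
\end{align*}
which depends only on $d$, $\Lip(D)$, and the scale-invariant ratio $\LipDelta(D)/\diam(D)$. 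Combining the three estimates and using the elementary inequality $(a+b)^{1/p}\le 2^{1/p}(a^{1/p}+b^{1/p})$ for $a,b\ge 0$ then yields \eqref{thm:Jackson - equation} for $(r_1,r_2)=(1,1)$, with a constant depending on $d$, $p$, $\Lip(D)$, and $\LipDelta(D)/\diam(D)$ but not on $\LipCov(D)$.

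The only nontrivial ingredient beyond \cref{Jackson for r_1=r_2=1 lemma} is the volume lower bound $|\mathcal{C}|\gtrsim_{\,d,\Lip(D)}\LipDelta(D)^d$, which I expect to be a one-line consequence of the cone construction in the appendix; this is precisely where the ratio $\LipDelta(D)/\diam(D)$ from $\LipProp(D)$ enters, while $\LipCov(D)$—which would be needed only if we wanted to patch together local cones at many base points—is not required, since the argument uses the existence of a single cone of the stated size inside $D$.
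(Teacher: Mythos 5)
Your proof is correct and follows essentially the same route as the paper's: apply \cref{Jackson for r_1=r_2=1 lemma}, bound the temporal average trivially by $2\,\omega_{1,t}(f,I\times D,|I|)_p^p$, and bound the spacial average by the modulus times the geometric ratio $\diam(D)^d/|D|$, which is controlled via the interior cone condition from \cref{Remark Lipschitz domain}. The paper first scales to $\diam(D)=1$ and then inscribes a ball $B_r(\bm{x})\subset D$ coming from the cone, while you use the cone's volume directly; these are cosmetically different phrasings of the same lower bound $|D|\gtrsim_{\,d,\Lip(D)}\LipDelta(D)^d$, and both correctly exhibit why $\LipCov(D)$ never enters.
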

\begin{proof}
    For $p=\infty $, \cref{Jackson for r_1=r_2=1 lemma} directly states this result. For $p<\infty$, first consider 
    \begin{equation*}
    	\frac{1}{|I|}\int\limits_{\{|h|\le |I|\}} \|\Delta_{h,t}^1 f\|^p_{L_p(I_{1,h}\times D)} \,dh\le \frac{2\,|I|}{|I|}\omega_{1, t}(f, I\times D, |I|)_p^p= 2\,\omega_{1, t}(f, I\times D, |I|)_p^p.
	\end{equation*}		
    Further, be aware of the fact  that we can again scale to $\diam(D)=1$ without changing $\LipProp(D)$ and that the interior cone condition from \cref{Remark Lipschitz domain}\ref{Remark Lipschitz domain - interior cone condition}  implies especially that there is some $\bm{x}\in D$ and $r=r(d,\Lip(D),\LipDelta(D))$, such that $B_r(\bm{x})\subset D$. Then, 
    \begin{align*}
		\frac{1}{|D|}\int\limits_{\{|\bm{h}|\le \diam(D)\}} \|\Delta_{\bm{h},\bm{x}}^1 f\|^p_{L_p(I\times D_{1,\bm{h}})} \,d\bm{h} &\le \omega_{1, \bm{x}}(f, I\times D, \diam(D))_p^p \frac{2^d}{|B_r(\bm{x})|} \\&\lesssim_{\,d,r,\Lip(D),\LipDelta(D)} \omega_{1, \bm{x}}(f, I\times D, \diam(D))_p^p.
	\end{align*}
    Now the assertion follows by combining both estimates above with the result of \cref{Jackson for r_1=r_2=1 lemma} for $p<\infty$.
\end{proof}

For the proof of the Jackson's estimate for arbitrary $r_1, r_2\in\N$, we will need the following lemma concerning approximation with step functions.
    
\begin{lem}\label{Lemma_Approximation_step_functions}
	Let $p\in(0,\infty]$, $n \in \mathbb{N}$, and $f\in L_p(I\times D)$. Additionally assume $I=[0,1]$ and $\diam(D)=1$. Then there is a space-time step function
		\begin{equation*}
		\varphi = \sum\limits^{n}_{i=1}\sum\limits^{\mathcal{K}}_{k=1} \mathds{1}_{\big[\frac{i-1}{n},\frac{i}{n}\big)\times Q_k}c_{i,k}, 
	\end{equation*}
    with the following properties:
	\begin{enumerate}[label=(\roman*)]
		\item $Q_k$, $k=1,\dots,\mathcal{K}$, are cubes of side length $n^{-1}$ from the uniform grid on $\R^d$ such that $\mathcal{K}\lesssim_{\,d} n^d$,\label{Lemma_Approximation_step_functions_enum_2}
		\item $D\subset \bigcup\limits_{k=1}^{\mathcal{K}}Q_k$.
		\label{Lemma_Approximation_step_functions_enum_3}
		\item $\|f-\varphi\|_{L_p(I\times D)} \lesssim_{\,d,p, \Lip(D), \LipDelta(D)}  \omega_{1,t}(f, I\times D, n^{-1})_p + \omega_{1,\bm{x}}(f, I\times D, n^{-1})_p$ and
		\label{Lemma_Approximation_step_functions_enum_4}
		\item $\|\varphi\|_{L_p(\R^{d+1})}  \lesssim_{\,d,p, \Lip(D), \LipDelta(D)}  \|f\|_{L_p(I\times D)}$. \label{Lemma_Approximation_step_functions_enum_5}
	\end{enumerate}
\end{lem}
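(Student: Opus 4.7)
The plan is to construct $\varphi$ cube by cube, using the interior cone condition from \cref{Remark Lipschitz domain}\ref{Remark Lipschitz domain - interior cone condition} to associate to each spatial grid cube a nearby reference cube fully contained in $D$. First, I set up the grid: partition $I=[0,1]$ into the $n$ equal intervals $I_i:=[\tfrac{i-1}{n},\tfrac{i}{n})$, and let $\{Q_k\}_{k=1}^{\mathcal{K}}$ enumerate all cubes of side length $n^{-1}$ from the lattice $n^{-1}\mathbb{Z}^d$ that meet $D$. Since $\diam(D)=1$, there are at most $\mathcal{K}\lesssim_d n^d$ such cubes, so \ref{Lemma_Approximation_step_functions_enum_2} and \ref{Lemma_Approximation_step_functions_enum_3} follow automatically. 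If $n$ is so small that $n^{-1}$ is not noticeably smaller than $\LipDelta(D)$, a single constant returned by \cref{Jackson for r_1=r_2=1 lemma} applied on the whole $I\times D$ already satisfies \ref{Lemma_Approximation_step_functions_enum_4} and \ref{Lemma_Approximation_step_functions_enum_5}, after invoking the scaling of \cref{Rem:Moduli of smoothness}\ref{Rem:Moduli of smoothness - scaling item}; I therefore focus on the main case of sufficiently large $n$.

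For each $Q_k$ meeting $D$ I would associate a reference cube $\tilde{Q}_k\subset D$ of side length $\sim_{d,\LipProp(D)} n^{-1}$ located within distance $\lesssim n^{-1}$ of $Q_k$. Picking any $\bm{x}_k\in Q_k\cap D$, the cone condition supplies an index $j(k)$ with $\bm{x}_k+\mathcal{C}_{j(k)}\subset D$, and since $\diam(\mathcal{C})\le \LipDelta(D)/4$ is fixed while $n^{-1}$ is small, a cube $\tilde{Q}_k$ of the desired side length can be inscribed in $\bm{x}_k+\mathcal{C}_{j(k)}$. A standard packing argument then gives the bounded overlap $\#\{k:\bm{y}\in\tilde{Q}_k\}\lesssim_{d,\LipProp(D)} 1$ for every $\bm{y}\in\R^d$.

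The coefficients $c_{i,k}$ are chosen by mimicking the proof of \cref{Jackson for r_1=r_2=1 lemma}, but with the ``evaluation'' domain $I_i\times(Q_k\cap D)$ separated from the ``sampling'' domain $I_i\times\tilde{Q}_k$. The two nonnegative functions $\phi_{i,k}(s,\bm{y}):=\int_{I_i\times(Q_k\cap D)}|f(\tau,\bm{z})-f(s,\bm{y})|^p\,d(\tau,\bm{z})$ and $(s,\bm{y})\mapsto|f(s,\bm{y})|^p$ both live on $I_i\times\tilde{Q}_k$, so a Chebyshev argument produces a point $(s_{i,k},\bm{y}_{i,k})\in I_i\times\tilde{Q}_k$ on which \emph{both} are simultaneously bounded by twice their respective averages; I set $c_{i,k}:=f(s_{i,k},\bm{y}_{i,k})$. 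To verify \ref{Lemma_Approximation_step_functions_enum_4} I bound $\|f-c_{i,k}\|_{L_p(I_i\times(Q_k\cap D))}^p$ by the $(s,\bm{y})$-average of $\phi_{i,k}$ over $I_i\times\tilde{Q}_k$, split the integrand via the intermediate value $f(\tau,\bm{y})$ into temporal and spatial pieces, and change variables $h=\tau-s$ and $\bm{h}=\bm{y}-\bm{z}$ to rewrite them as localized integrals of $\Delta^1_{h,t}f$ and $\Delta^1_{\bm{h},\bm{x}}f$ at scale $\lesssim n^{-1}$; crucially, $\bm{y}\in\tilde{Q}_k\subset D$ automatically places the spatial translate inside $D$, so no boundary issue arises. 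Summing over $(i,k)$ exploits the disjointness of the prisms $I_i\times Q_k$ and the bounded overlap of $\{\tilde{Q}_k\}$ to reconstitute the global moduli $\omega_{1,t}(f,I\times D,n^{-1})_p$ and $\omega_{1,\bm{x}}(f,I\times D,n^{-1})_p$, the scaling of \cref{Rem:Moduli of smoothness} absorbing the constant loss from $n^{-1}\to Cn^{-1}$.

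Property \ref{Lemma_Approximation_step_functions_enum_5} follows because the same Chebyshev choice gives $|c_{i,k}|^p\le 2|I_i\times\tilde{Q}_k|^{-1}\int_{I_i\times\tilde{Q}_k}|f|^p$, whence $|c_{i,k}|^p|I_i\times Q_k|\lesssim_d \int_{I_i\times\tilde{Q}_k}|f|^p$, and summing via the bounded overlap of $\{\tilde{Q}_k\}$ yields $\|\varphi\|_{L_p(\R^{d+1})}^p\lesssim_d \|f\|_{L_p(I\times D)}^p$. The case $p=\infty$ is analogous, with essential suprema replacing averages and Chebyshev. The main obstacle I expect is the geometric construction of the reference cubes: both the inscribed-cube step and the bounded-overlap count rely on the explicit bound $\diam(\mathcal{C})\le\LipDelta(D)/4$ supplied by \cref{Remark Lipschitz domain}\ref{Remark Lipschitz domain - interior cone condition}, and this is also where the implicit constants in \ref{Lemma_Approximation_step_functions_enum_4} and \ref{Lemma_Approximation_step_functions_enum_5} acquire their $\LipProp(D)$-dependence.
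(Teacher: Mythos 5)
Your proof is correct but follows a genuinely different route from the paper. The paper applies the already-proved \cref{Jackson for r_1=r_2=1 lemma} on each prism $I_i\times(\tilde Q_k\cap D)$, where $\tilde Q_k$ is the threefold dilation of $Q_k$ (so that $|\tilde Q_k\cap D|\gtrsim n^{-d}$ by the cone condition), and then defines $c_{i,k}$ as the arithmetic mean of those constants over all dilated cubes $\tilde Q_k^{(j)}\supset Q_k$; to obtain \ref{Lemma_Approximation_step_functions_enum_5} it must first assume $f\ge 0$ (so all the constants are nonnegative and the mean is comparable to the pieces) and then pass to general $f$ by the $f=f_+-f_-$ decomposition on a refined common grid. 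You instead push the cone into $D$ to produce a small \emph{interior} reference cube $\tilde Q_k\subset D$ near $Q_k$, separate the evaluation domain $I_i\times(Q_k\cap D)$ from the sampling domain $I_i\times\tilde Q_k$, and pick $c_{i,k}=f(s_{i,k},\bm y_{i,k})$ via a simultaneous Chebyshev argument that controls both the distance functional $\phi_{i,k}$ and $|f|^p$ at once. This buys you \ref{Lemma_Approximation_step_functions_enum_5} directly without the sign decomposition and the averaging step, which is arguably cleaner; the price is a slightly more delicate geometric construction (inscribing a cube of side $\sim n^{-1}$ near the cone tip, which forces the separate treatment of the regime $n^{-1}\gtrsim\LipDelta(D)$, whereas the paper's dilated-cube construction works uniformly in $n$) and a bounded-overlap count for the interior cubes that plays the role of the paper's $J(k)\le 3^d$ bound. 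In the summation for \ref{Lemma_Approximation_step_functions_enum_4}, the spatial piece uses the disjointness of the $Q_k\cap D$ and the temporal piece the bounded overlap of $\{\tilde Q_k\}$, mirroring what the paper does with its dilated cubes; the split through the intermediate value $f(\tau,\bm y)$ is legitimate because $\bm y\in\tilde Q_k\subset D$. One cosmetic point: the scaling estimate you need in the small-$n$ regime is item (iv) of \cref{Rem:Moduli of smoothness}, not the order-comparison that the label \textnormal{Rem:Moduli of smoothness - scaling item} is attached to.
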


\begin{proof}
    We first consider the case   $f\ge 0$ and set $I_i:=\big[\frac{i-1}{n},\frac{i}{n}\big)$ for $i=1,\dots,n$. Further, one can surely translate $D$ without loss of generality such that it can be covered with cubes on the uniform grid of $[-1, 1]^d$ with sidelength $n^{-1}$. This means that we get $Q_k$, $k=1,\dots,\tilde{\mathcal{K}}\le (2n)^d$ of such cubes, whereby we require $|Q_k\cap D|>0$ for $k\in \tilde{\mathcal{K}}$. In particular, this implies that $\tilde{\mathcal{K}}< (2n)^d$ is possible. \\

\begin{minipage}{0.5\textwidth}
    By symmetric scaling at the center of $Q_k$ for every $k=1,\dots,\tilde{\mathcal{K}}$, we get more cubes $\tilde{Q}_k$, $k=1,\dots,\tilde{\mathcal{K}}$ with side length $3n^{-1}$.
For those, it holds
    \begin{equation}\label{Lemma_Approximation_step_functions_proof_1}
    	|\tilde{Q}_k\cap D|\gtrsim_{\, d,\Lip(D), \LipDelta(D)} n^{-d},
    \end{equation}
    which we can see as follows: Choose $x_k\in Q_k\cap D$ with corresponding cone $V_k$ (with cone tip $x_k$) depending on $\Lip(D)$ and $\LipDelta(D)$ according to \cref{Remark Lipschitz domain}\ref{Remark Lipschitz domain - interior cone condition} such that $V_k\subset D$. Then, as the cone is not too thin, we see that
    \begin{align*}
	|\tilde{Q}_k\cap D|\ge |\tilde{Q}_k\cap V_k| \gtrsim_{\, d, \Lip(D), \LipDelta(D)} n^{-d}
    \end{align*}
    holds. 
    \end{minipage}\hfill \begin{minipage}{0.45\textwidth}
    \begin{center}
    \includegraphics[height=7cm]{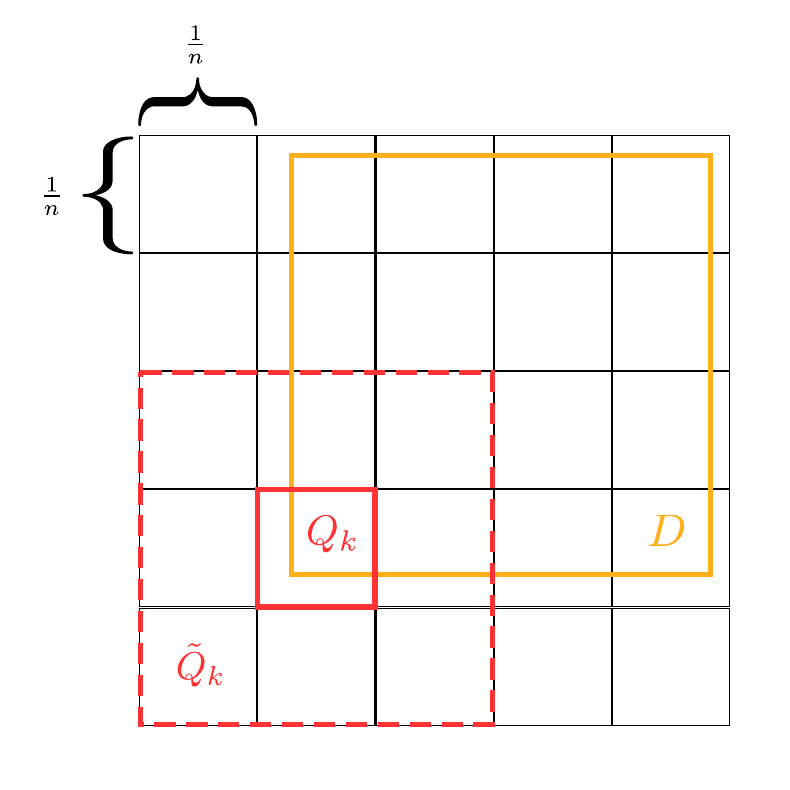}
    \end{center}
    \vspace{-0.8cm}
    \captionof{figure}{$Q_k$ and $\tilde{Q}_k$}
\end{minipage}\\[0.3cm]
    Now we complete the finite sequence $(Q_k)_{k=1,\dots,\tilde{\mathcal{K}}}$ with further cubes $(Q_k)_{k=\tilde{\mathcal{K}}+1,\dots,\mathcal{K}}$, $\mathcal{K}\lesssim_{\,d} n^{d} $, of side length $n^{-1}$ from the uniform grid such that
\begin{equation*}
	\bigcup\limits_{k=1}^\mathcal{K} Q_k = \bigcup\limits_{k=1}^{\tilde{\mathcal{K}}} \tilde{Q}_k.
\end{equation*} 

The set $(Q_k)_{k=1,\dots,\mathcal{K}}$ now fulfills \ref{Lemma_Approximation_step_functions_enum_2} and \ref{Lemma_Approximation_step_functions_enum_3}. Still, \ref{Lemma_Approximation_step_functions_enum_4} and \ref{Lemma_Approximation_step_functions_enum_5} remain to be shown. First, we consider $p<\infty$. For $i\in\{1,\dots,n\}$ and $k\in\{1,\dots,\tilde{\mathcal{K}}\}$ there is a constant $\tilde{c}_{i,k}$ according to \cref{Jackson for r_1=r_2=1 lemma} which allows us to estimate
\begin{align}\label{Lemma_Approximation_step_functions_proof_2}
	\int\limits_{I_{i}\times (\tilde{Q}_k\cap D)} | f(t,\bm{x})- \tilde{c}_{i,k}|^p\,d(t,\bm{x}) &\lesssim_{\,p} \frac{1}{|I_i|}\int\limits_{\{|h|\le |I_{i}|\}} \|\Delta^1_{h,t}\|^p_{L_p((I_i)_{1,h}\times (\tilde{Q}_k\cap D))}\,dh 
	\\ & \ \ \ + \frac{1}{|\tilde{Q}_k\cap D|}\int\limits_{\{|\bm{h}|\le \diam(\tilde{Q}_k\cap D)\}} \|\Delta^1_{\bm{h},\bm{x}}\|^p_{L_p(I_i\times (\tilde{Q}_k\cap D)_{1,\bm{h}})}\,d\bm{h}	\notag	
	\\&\le \frac{1}{|I_i|}\int\limits_{\{|h|\le |I_i|\}} \|\Delta^1_{h,t}\|^p_{L_p((I_i)_{1,h}\times (\tilde{Q}_k\cap D))}\,dh \notag
	\\ & \ \ \ + \frac{1}{|\tilde{Q}_k\cap D|}\int\limits_{\{|\bm{h}|\le 3n^{-1}\sqrt{d}\}} \|\Delta^1_{\bm{h},\bm{x}}\|^p_{L_p(I_i\times (\tilde{Q}_k\cap D)_{1,\bm{h}})}\,d\bm{h}.\notag
\end{align}

\begin{minipage}{0.45\textwidth}
For $Q_k$, $k=1,\dots,\mathcal{K}$, we define $\left(\tilde{Q}_k^{(j)}\right)_{j=1,\dots,J(k)}$, $J(k)\le 3^d$, as a subsequence of $(\tilde{Q}_k)_{k=1,\dots,\tilde{\mathcal{K}}}$ for which it holds true that
\begin{equation*}
	Q_k \subset \tilde{Q}_k^{(j)}, \quad j=1,\dots,J(k).
\end{equation*}
We denote the constants from \eqref{Lemma_Approximation_step_functions_proof_2}  that correspond to $I_i\times \tilde{Q}_k^{(j)}$ as $\tilde{c}_{i,k}^{(j)}$ and set
\begin{equation*}
	c_{i,k} = \frac{1}{J(k)}\sum\limits_{j=1}^{J(k)} \tilde{c}_{i,k}^{(j)},\quad i=1,\dots,n ,\quad k=1,\dots,\mathcal{K}.
\end{equation*}
 \end{minipage}\hfill \begin{minipage}{0.45\textwidth}
\begin{center}
\includegraphics[height=7cm]{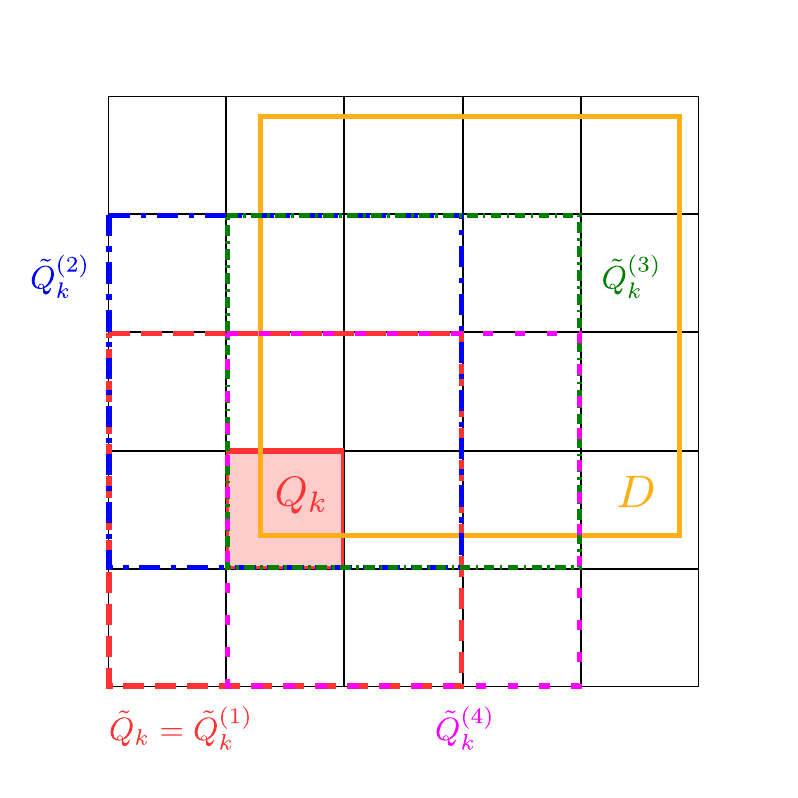}
\end{center}
\vspace{-0.5cm}
\captionof{figure}{Illustration of subsequence $\tilde{Q}^{(j)}_k$ with  $Q_k\subset \tilde{Q}^{(j)}_k$, $j=1,\dots, 4$.}
\end{minipage}\\[0.3cm]

The definitions of $\tilde{Q}_k$ for $k=1,\dots,\tilde{\mathcal{K}}$ as well as $\tilde{Q}_k^{(j)}$, $\tilde{c}_{i,k}$, and $ \tilde{c}_{i,k}^{(j)}$, for $i=1,\dots,n$, $k=1,\dots,\mathcal{K}$, and \mbox{$j=1,\dots,J(k)$}, in particular imply

\begin{align}\label{Lemma_Approximation_step_functions_proof_3}
\bigcup\limits_{k=1}^{\tilde{\mathcal{K}}} \tilde{Q}_k\times\left\{\tilde{c}_{i,k}\right\} = \bigcup\limits_{k=1}^{\mathcal{K}} \bigcup\limits_{j=1}^{J(k)} Q_k\times\left\{\tilde{c}_{i,k}^{(j)}\right\} ,\quad i=1,\dots,n. 
\end{align}

Further, we define $\varphi:=\sum\limits_{i=1}^n \sum\limits_{k=1}^{\mathcal{K}}\mathds{1}_{I_i\times Q_k}c_{i,k}$. Now we  derive

\begin{align}\label{Lemma_Approximation_step_functions_proof_4}
	\|f-\varphi\|_{L_p(I\times D)}^p &= \sum\limits_{i=1}^n \sum\limits_{k=1}^{\tilde{\mathcal{K}}} \,\int\limits_{I_i\times (Q_k \cap D)} |f(t,\bm{x})-c_{i,k}|^p\,d(t,\bm{x})\notag 
	\\&= \sum\limits_{i=1}^n \sum\limits_{k=1}^{\tilde{\mathcal{K}}} \,\int\limits_{I_i\times (Q_k \cap D)} \left|\frac{1}{J(k)}\sum\limits_{j=1}^{J(k)} \left(f(t,\bm{x})- \tilde{c}_{i,k}^{(j)}\right)\right|^p\,d(t,\bm{x})\notag
	\\  &\lesssim_{\,p,d} \sum\limits_{i=1}^n \sum\limits_{k=1}^{\tilde{\mathcal{K}}}\sum\limits_{j=1}^{J(k)}\,\int\limits_{I_i\times (Q_k \cap D)}\left|f(t,\bm{x}) - \tilde{c}_{i,k}^{(j)}\right|^p\,d(t,\bm{x})\notag
	\\&=\sum\limits_{i=1}^n \sum\limits_{k=1}^{\tilde{\mathcal{K}}} \,\int\limits_{I_i\times (\tilde{Q}_k \cap D)}|f(t,\bm{x}) - \tilde{c}_{i,k}|^p\, d(t,\bm{x}), 
\end{align}
where we have used that $|Q_k\cap D|=0$ for $k>\tilde{\mathcal{K}}$ in the first step, $J(k)\ge 1$ as well as the \mbox{(quasi-)}norm equivalence in $\R^{J(k)}\subset \R^{3^d}$ in the third, and \eqref{Lemma_Approximation_step_functions_proof_3} in the last step. We continue by combining \eqref{Lemma_Approximation_step_functions_proof_2} with \eqref{Lemma_Approximation_step_functions_proof_4} to obtain
\begin{align*}
	\|f-\varphi \|_{L_p(I\times D)}^p \lesssim_{\,p,d} &\sum\limits_{i=1}^n \sum\limits_{k=1}^{\tilde{\mathcal{K}}}\frac{1}{|I_i|}\int\limits_{\{|h|\le |I_i|\}} \|\Delta^1_{h,t}\|^p_{L_p((I_i)_{1,h}\times (\tilde{Q}_k\cap D))}\,dh 
	\\&+ \sum\limits_{i=1}^n \sum\limits_{k=1}^{\tilde{\mathcal{K}}}\frac{1}{|\tilde{Q}_k\cap D|}\int\limits_{\{|\bm{h}|\le 3n^{-1}\sqrt{d}\}} \|\Delta^1_{\bm{h},\bm{x}}\|^p_{L_p(I_i\times (\tilde{Q}_k\cap D)_{1,\bm{h}})}\,d\bm{h}
\\:=& \ \ \ (I)+ (II)	
\end{align*}
Using $|I_i|=n^{-1}$ and $\bigcup\limits_{i=1}^{n} \bigcup\limits_{k=1}^{\tilde{\mathcal{K}}} (I_i)_{1,h} \times (\tilde{Q}_k\cap D)\subset \left(\bigcup\limits_{i=1}^{n} I_i\right)_{1,h}\times \left(\bigcup\limits_{k=1}^{\tilde{\mathcal{K}}} \tilde{Q}_k\cap D\right) = I_{1,h} \times D$, we get 
\begin{align*}
	(I) =& \sum\limits_{i=1}^n \sum\limits_{k=1}^{\tilde{\mathcal{K}}}\frac{1}{|I_i|}\int\limits_{\{|h|\le |I_i|\}} \|\Delta^1_{h,t}\|^p_{L_p((I_i)_{1,h}\times (\tilde{Q}_k\cap D))}\,dh
	\\\le&\  n \int\limits_{\{|h|\le n^{-1}\}} \|\Delta^1_{h,t}\|^p_{L_p(I_{1,h}\times D)}\,dh \le 2\,\omega_{1,t}\left(f, I\times D, n^{-1}\right)_p^p.
\end{align*}

Similarly, for the spatial component it holds
\begin{align*}
	(II) &=\sum\limits_{i=1}^n \sum\limits_{k=1}^{\tilde{\mathcal{K}}}\frac{1}{|\tilde{Q}_k\cap D|}\int\limits_{\{|\bm{h}|\le 3n^{-1}\sqrt{d}\}} \|\Delta^1_{\bm{h},\bm{x}}\|^p_{L_p(I_i\times (\tilde{Q}_k\cap D)_{1,\bm{h}})}\,d\bm{h}
	\\ &\lesssim_{\,d,\Lip(D), \LipDelta(D)} \,n^d\int\limits_{\{|\bm{h}|\le 3n^{-1}\sqrt{d}\}} \|\Delta^1_{\bm{h},\bm{x}}\|^p_{L_p(I\times D_{1,\bm{h}})}\,d\bm{h}
	\\&\lesssim_{\, d} \omega_{1,\bm{x}}\left(f,I\times D,3n^{-1}\sqrt{d}\right)_p^p
    \lesssim_{\,d,p} \omega_{1,\bm{x}}\left(f,I\times D,n^{-1}\right)_p^p,
\end{align*}
due to \eqref{Lemma_Approximation_step_functions_proof_1}. This implies \ref{Lemma_Approximation_step_functions_enum_4}. Now we can turn to \ref{Lemma_Approximation_step_functions_enum_5}. Since $f\ge 0$, we especially have $\tilde{c}_{i,k}^{(j)},\, c_{i,k}\ge 0$ for any $i\in \{1,\dots, n\}$, $ k\in \{1,\dots, \mathcal{K}\}$, and $j\in \{1,\dots, J(k)\}$.

Thus, we obtain
\begin{align}\label{Lemma_Approximation_step_functions_proof_5}
	\|\varphi\|_{L_p(\R^{d+1})}^p &= \sum\limits_{i=1}^n \sum\limits_{k=1}^{\mathcal{K}} |I_i\times Q_k|\, c_{i,k}^p \notag 
	\\&= n^{-(d+1)} \sum\limits_{i=1}^{n} \sum\limits_{k=1}^{\mathcal{K}}\, \left(\frac{1}{J(k)}\sum\limits_{j=1}^{J(k)} \tilde{c}_{i,k}^{(j)}\right)^p\notag
	\\&\lesssim_{\,p,d}n^{-(d+1)} \sum\limits_{i=1}^{n} \sum\limits_{k=1}^{\mathcal{K}}\, \sum\limits_{j=1}^{J(k)} \left(\tilde{c}_{i,k}^{(j)}\right)^p\notag
	\\&\lesssim_{\,d,\Lip(D), \LipDelta(D)} \sum\limits_{i=1}^{n} \sum\limits_{k=1}^{\mathcal{K}} \sum\limits_{j=1}^{J(k)} \left(\tilde{c}_{i,k}^{(j)}\right)^p \left|I_i\times \left(\tilde{Q}_{k}^{(j)} \cap D\right)\right|\notag
	\\&\lesssim_{\,d} \sum\limits_{i=1}^{n} \sum\limits_{k=1}^{\mathcal{\tilde{K}}}  \left(\tilde{c}_{i,k}\right)^p \left|I_i\times \left(\tilde{Q}_{k} \cap D\right)\right|
	= \sum\limits_{i=1}^{n}\sum\limits_{k=1}^{\tilde{\mathcal{K}}}\sum\limits_{j=1}^{J(k)} \left(\tilde{c}_{i,k}^{(j)}\right)^p\left|I_i \times (Q_k\cap D)\right|,
\end{align}
where we again applied $J(k)\ge 1$ as well as the \mbox{(quasi-)}norm equivalence in $\R^{J(k)}\subset \R^{3^d} $ in the third step and \eqref{Lemma_Approximation_step_functions_proof_1} together with $|I_i| = n^{-1}$ in the fourth one. Further, we exploited that the summand $\left(\tilde{c}_{i,k}^{(j)}\right)^p\left|I_i \times (\tilde{Q}_k^{(j)}\cap D)\right|$ appears multiple but not more than $3^d$ times in the sum 
\begin{align*}
\sum\limits_{i=1}^{n} \sum\limits_{k=1}^{\mathcal{K}} \sum\limits_{j=1}^{J(k)} \left(\tilde{c}_{i,k}^{(j)}\right)^p \left|I_i\times \left(\tilde{Q}_{k}^{(j)} \cap D\right)\right|
\end{align*}
in the penultimate step and \eqref{Lemma_Approximation_step_functions_proof_3} in the last one. Another application of the \mbox{(quasi-)}norm equivalence in $\R^{J(k)}\subset \R^{3^d}$ together with \eqref{Lemma_Approximation_step_functions_proof_5} now leads to
\begin{align*}
	\|\varphi\|_{L_p(\R^{d+1})}^p  &\lesssim_{\,d,p,\Lip(D),\LipDelta(D)} \sum\limits_{i=1}^{n}\sum\limits_{k=1}^{\tilde{\mathcal{K}}}\left(\sum\limits_{j=1}^{J(k)} \tilde{c}_{i,k}^{(j)}\right)^p\left|I_i \times (Q_k\cap D)\right| 
	\\&=\sum\limits^n_{i=1} \sum\limits_{k=1}^{\tilde{\mathcal{K}}} J(k)^p\int\limits_{I_i\times (Q_k\cap D)}c_{i,k}^p\, d(t,\bm{x})
	\\&\lesssim_{\,d,p} \sum\limits^n_{i=1} \sum\limits_{k=1}^{\tilde{\mathcal{K}}} \int\limits_{I_i\times (Q_k\cap D)}c_{i,k}^p\, d(t,\bm{x}) 
	\\&= \|\varphi\|_{L_p(I\times D)}^p. 
\end{align*}
With this estimate, the \mbox{(quasi-)}subadditivity of $\|\cdot\|_{L_p(I\times D)}^p$, \ref{Lemma_Approximation_step_functions_enum_4}, and the possibility to estimate the modulus of smoothness of a function by its $\|\cdot\|_{L_p(I\times D)}$-\mbox{(quasi-)}norm, we can conclude
\begin{align*}
\|\varphi\|_{L_p(\R^{d+1})}^p &\lesssim_{\, d,p,\Lip(D), \LipDelta(D)}\|\varphi\|_{L_p(I\times D)}^p
    \lesssim_{\,p} \|f\|_{L_p(I\times D)}^p  + \|f-\varphi\|_{L_p(I\times D)}^p 
	\\& \lesssim_{\,d,p,\Lip(D), \LipDelta(D)} \|f\|_{L_p(I\times D)}^p  + \omega_{1,t}(f, I\times D, n^{-1})_p^p + \omega_{1,\bm{x}}(f, I\times D, n^{-1})_p^p \lesssim_{\,p} \|f\|_{L_p(I\times D)}^p.
\end{align*}
This shows \ref{Lemma_Approximation_step_functions_enum_5}. Therefore, we have shown the initial assertion for $p<\infty$ and $f\ge 0$. For arbitrary $f\in L_p(I\times D)$, we decompose $f$ classically in $f_+, f_-\ge 0$, such that $f=f_+-f_-$. This leads to step functions $\varphi_+, \varphi_-$, which fulfill the above assertions \ref{Lemma_Approximation_step_functions_enum_2}-\ref{Lemma_Approximation_step_functions_enum_5}. We now make an overlay of the space grid of $\varphi_+$ and $\varphi_-$ to get a common space grid. This fulfills the requirements \ref{Lemma_Approximation_step_functions_enum_2}-\ref{Lemma_Approximation_step_functions_enum_3} and $\varphi:=\varphi_+ - \varphi_-$ is itself a step function on this refined grid. Further, $\varphi$ fulfills \ref{Lemma_Approximation_step_functions_enum_4}-\ref{Lemma_Approximation_step_functions_enum_5}, which can be shown by simple calculations. Lastly, the case $p=\infty$ works very similar to the case $p<\infty$ and is therefore omitted.
\end{proof}

Further, we will need the following result which we will also prove in \cref{Appendix:Proof of Lemma equivalence averaged spacial modulus of smoothness}.

\begin{lem}\label{Lemma moduli of smoothness = 0 => anisotropic polynomials}
	Let $p\in(0,\infty]$ and $f\in L_p(I\times D)$. Then 
    \begin{enumerate}[label=(\roman*)]
        \item $\omega_{r_1,t}(f, I\times D, \delta_1)_p=0=\omega_{r_2,\bm{x}}(f, I\times D, \delta_2)_p$ for some $\delta_1, \delta_2\in(0,\infty)$ implies $f\in \Pi_{t,\bm{x}}^{r_1,r_2}(I\times D)$.\footnote{Here and in what follows we will write $f\in \Pi_{t,\bm{x}}^{r_1,r_2}(I\times D)$ to indicate that $f=\bar f$ almost everywhere with $\bar f\in \Pi_{t,\bm{x}}^{r_1,r_2}(I\times D)$.}
        \label{Lemma moduli of smoothness = 0 => anisotropic polynomials - 1}
        \item $f\in \Pi_{t,\bm{x}}^{r_1,r_2}(I\times D)$ implies $\omega_{r_1,t}(f, I\times D, \delta_1)_p=\omega_{r_2,\bm{x}}(f, I\times D, \delta_2)_p=0$ for any $\delta \in [0,\infty)$.\label{Lemma moduli of smoothness = 0 => anisotropic polynomials - 2}
    \end{enumerate}
\end{lem}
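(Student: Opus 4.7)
Part (ii) is immediate algebraically. By the explicit identities in \cref{Rem:Difference operators}, $\Delta_{h,t}^{r_1}$ annihilates any polynomial of degree less than $r_1$ in $t$; likewise, for fixed $\tau$ and $\bm{z}$ the map $s \mapsto P(\tau, \bm{z} + s\bm{h})$ is a polynomial in $s$ of degree at most the total $\bm{x}$-degree of $P \in \Pi_{t,\bm{x}}^{r_1,r_2}$, which is strictly less than $r_2$, so $\Delta^{r_2}_{\bm{h},\bm{x}} P(\tau, \bm{z})$ is the $r_2$-th finite difference (with step $1$) of this polynomial at $s=0$ and vanishes identically. Both moduli are then zero for every $\delta_1, \delta_2 \ge 0$.

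For part (i), the plan is to mollify and then exploit smoothness. Let $\psi_\varepsilon$ be a nonnegative smooth mollifier on $\R^{d+1}$ supported in $B_\varepsilon(0)$ with $\int \psi_\varepsilon = 1$, and put $f_\varepsilon := f * \psi_\varepsilon$ on the open $\varepsilon$-interior $U_\varepsilon := \{(\tau, \bm{z}) \in I \times D : \overline{B_\varepsilon(\tau, \bm{z})} \subset I \times D\}$. The convolution $f_\varepsilon$ is smooth on $U_\varepsilon$, and a Fubini computation yields $\Delta^{r_1}_{h,t} f_\varepsilon = (\Delta^{r_1}_{h,t} f) * \psi_\varepsilon$ and $\Delta^{r_2}_{\bm{h},\bm{x}} f_\varepsilon = (\Delta^{r_2}_{\bm{h},\bm{x}} f) * \psi_\varepsilon$ on appropriate sub-domains of $U_\varepsilon$. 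By hypothesis these right-hand sides vanish pointwise for $|h| \le \delta_1$ and $|\bm{h}| \le \delta_2$. Dividing by $h^{r_1}$ and sending $h \to 0$ forces $\partial_t^{r_1} f_\varepsilon \equiv 0$ on $U_\varepsilon$. For any unit vector $\bm{v} \in \R^d$, taking $\bm{h} = s\bm{v}$ with $s \to 0$ gives
\[
0 = \lim_{s \to 0} s^{-r_2} \Delta^{r_2}_{s\bm{v},\bm{x}} f_\varepsilon = (\bm{v} \cdot \nabla_{\bm{x}})^{r_2} f_\varepsilon = \sum_{|\alpha| = r_2} \binom{r_2}{\alpha} \bm{v}^\alpha \, \partial_{\bm{x}}^\alpha f_\varepsilon,
\]
and the linear independence in $\bm{v}$ of the monomials $\{\bm{v}^\alpha : |\alpha| = r_2\}$ then forces $\partial^\alpha_{\bm{x}} f_\varepsilon \equiv 0$ on $U_\varepsilon$ for every such $\alpha$. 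Hence $f_\varepsilon$ is an anisotropic polynomial of order $(r_1, r_2)$ on every connected component of $U_\varepsilon$.

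To conclude, fix any open ball $B \subset I \times D$. For $\varepsilon$ small enough, $B \subset U_\varepsilon$ and $f_\varepsilon|_B \in \Pi_{t,\bm{x}}^{r_1,r_2}(B)$. Since this space is finite-dimensional and hence closed in the quasi-Banach space $L_p(B)$ for $0 < p < \infty$ (and also in $L_1(B)$, which covers $p = \infty$ via $L_\infty(B) \hookrightarrow L_1(B)$ on the bounded set $B$), and $f_\varepsilon \to f$ in the relevant norm as $\varepsilon \to 0$, one obtains $f|_B \in \Pi_{t,\bm{x}}^{r_1,r_2}(B)$. Covering $I \times D$ by countably many overlapping open balls (possible since $I \times D$ is connected, $D$ being a domain) and using that any two members of $\Pi_{t,\bm{x}}^{r_1,r_2}$ that coincide on a set of positive measure are identically equal, the local polynomials patch into a single global $P \in \Pi_{t,\bm{x}}^{r_1,r_2}(I \times D)$ with $f = P$ almost everywhere. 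The main technical nuisance is the bookkeeping between the domain of the difference operator and that of the mollified function, to guarantee $\Delta^{r_1}_{h,t} f_\varepsilon$ and $\Delta^{r_2}_{\bm{h},\bm{x}} f_\varepsilon$ vanish on a genuinely open set large enough for the limit argument; the patching step additionally relies on the connectedness of $D$.
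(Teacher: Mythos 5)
Your part (ii) argument is correct and essentially self-contained, whereas the paper deduces it from [Dek22, Thm.~1.26] after a Fubini slicing; both work.

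For part (i) you take a genuinely different route from the paper. The paper slices $f$ by Fubini into $f_\tau := f(\tau,\cdot)$, invokes the known $d$-dimensional rigidity result [DL04, Lem.~2.13] to get $f_\tau\in\Pi^{r_2}_{\bm{x}}(D)$ for a.e.\ $\tau$, writes $f(\tau,\bm{z})=\sum_{|\alpha|<r_2}c_\alpha(\tau)\bm{z}^\alpha$ with $c_\alpha\in L_p(I)$ by linear independence of the monomials, and repeats the same citation in the $t$-variable to conclude $c_\alpha\in\Pi^{r_1}_t(I)$. Your mollification argument has the appeal of being self-contained, but it contains a genuine gap in exactly the range where the lemma is needed: for $0<p<1$ a function in $L_p(I\times D)$ need not be locally integrable (consider $f(x)=|x-x_0|^{-s}$ near an interior point $x_0$ with $1<s<1/p$), so the convolution $f*\psi_\varepsilon$ may simply not be defined, and even where it is, the claimed convergence $f_\varepsilon\to f$ in $L_p$ does not follow from the usual density argument, since convolution with a fixed kernel is not a bounded operator on $L_p$ when $p<1$. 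The lemma is stated for all $p\in(0,\infty]$ and is invoked in the proof of \cref{thm:Jackson} precisely in this sub-Banach range, so you would need either an a priori local-integrability argument deduced from the vanishing moduli, or a separate treatment of $p<1$ (the paper's reduction delegates this to [DL04], which in turn rests on the one-dimensional $p<1$ results of [Sto77, OS78]). For $1\le p\le\infty$ your mollification argument is sound, including the finite-dimensionality/closedness step and the patching over a cover of the connected set $I\times D$; but as written it does not prove the lemma in full generality.
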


Now we have all the necessary ingredients to prove the preliminary step to \cref{thm:Jackson}.

\begin{lem}\label{lem:last_step_to_thm:Jackson}
    Let $p\in (0,\infty]$ and $f\in L_p(I\times D)$. Then, there exists $P\in \Pi^{r_1, r_2}_{t,\bm{x}}(I\times D)$ with
    \begin{align*}
        \|f-P\|_{L_p(I\times D)}\lesssim_{\,d,p,r_1, r_2, D\,} \,\omega_{r_1,t}(f, I\times D, |I|)_p + \omega_{r_2,\bm{x}}(f, I\times D, \diam(D))_p.
    \end{align*}
\end{lem}

\begin{rem}\label{rem:to_lem:last_step_to_thm:Jackson}
    Notice, that by proving \cref{lem:last_step_to_thm:Jackson}, we do not yet show the assertion of \cref{thm:Jackson}, where it is claimed that the involved constant depends on $D$ only through $\LipProp(D)$. 
\end{rem}

\begin{proof}
    For $(r_1, r_2)=(1,1)$, the assertion has already been shown in \cref{Jackson for r_1=r_2=1}, so we can assume $(r_1, r_2)\in \N^2\setminus \{(1,1)\}$. Additionally, we can assume $I=[0,1]$, $\diam(D)=1$, and $D\subset [0,1]^d$ without loss of generality, since the Jackson-type estimate and the Lipschitz properties of $D$ are scaling and translation invariant, due to the Jacobi transformation formula and the second part of \cref{lem:Behaviour under affine linear transformations}.

    Assume that the statement is false, i.e., there is no constant independent of $f$ such that~\eqref{thm:Jackson - equation} holds. Then there is a sequence of functions $(f_m)_{m\in\N}\subset L_p(I\times D)$ such that 
    \begin{equation}\label{Proof Jackson Inequaltiy Assumption 1}
	   \min\limits_{P\in \Pi^{r_1, r_2}_{t,\bm{x}}(I\times D)} \|f_m-P\|_{L_p(I\times D)} > m \left(\left(\omega_{r_1,t}(f_m, I\times D, |I|)_p + \omega_{r_2, \bm{x}}(f_m, I\times D, \diam(D)\right)_p\right).
    \end{equation}
    For any $m\in \N$, set $P_m:= \underset{P\in \Pi^{r_1, r_2}_{t,\bm{x}}(I\times D)}{\textup{argmin}} \|f_m-P\|_{L_p(I\times D)}$ and $g_m:=\|f_m - P_m \|_{L_p(I\times D)}^{-1} (f_m - P_m)$. Then, obviously, $\|g_m\|_{L_p(I\times D)} = 1$ and also
    \begin{align}\label{Proof Jackson Inequaltiy properties 1.5}
         \min\limits_{P\in \Pi^{r_1, r_2}_{t,\bm{x}}(I\times D)} \|g_m-P\|_{L_p(I\times D)} = 1.
    \end{align}
\thesis{
    Indeed, we can estimate
    $\min\limits_{P\in \Pi^{r_1, r_2}_{t,\bm{x}}(I\times D)} \|g_m-P\|_{L_p(I\times D)} \le \|g_m\|_{L_p(I\times D)} = 1$ by choosing $P=0$. On the other hand, if \mbox{$\min\limits_{P\in \Pi^{r_1, r_2}_{t,\bm{x}}(I\times D)} \|g_m-P\|_{L_p(I\times D)}<1$}, then there is $\tilde{P}_m$ with $\|g_m-\tilde{P}_m\|_{L_p(I\times D)}<1$. Multiplication with $\|f_m-P_m\|_{L_p(I\times D)}$ on both sides of this inequality gives 
    $$\|f_m - \underbrace{(P_m + \|f_m - P_m\|_{L_p(I\times D)}\tilde{P}_m)}_{\in \Pi^{r_1, r_2}_{t, \bm{x}(I\times D)}}\|_{L_p(I\times D)}<\|f_m - P_m\|_{L_p(I\times D)}= \min\limits_{P\in \Pi^{r_1, r_2}_{t,\bm{x}}(I\times D)} \|f_m-P\|_{L_p(I\times D)}$$ 
    (recall the definition of $g_m$ from above).
    But this is a contradiction to the choice of $P_m$. Therefore, it must hold that $\min\limits_{P\in \Pi^{r_1, r_2}_{t,\bm{x}}(I\times D)} \|g_m-P\|_{L_p(I\times D)} = 1$. 
}
    Further, for $dir\in\{t,\bm{x}\}$ and the corresponding $i\in\{1,2\}$, $\delta_i\in\{|I|, \diam(D)\}$, we can estimate
    \begin{align}\label{Proof Jackson Inequaltiy properties g_m 2}
        \omega_{r_i, dir}(g_m, I\times D, \delta_i)_p &= \|f_m - P_m \|_{L_p(I\times D)}^{-1}\omega_{r_i, dir}(f_m, I\times D,\delta_i)_p 
        \\&< \|f_m - P_m \|_{L_p(I\times D)}^{-1} \, \frac{\min\limits_{P\in \Pi^{r_1, r_2}_{t,\bm{x}}(I\times D)} \|f_m-P\|_{L_p(I\times D)}}{m}=\frac{1}{m},\notag
    \end{align}	
    where we have applied \cref{Lemma moduli of smoothness = 0 => anisotropic polynomials}\ref{Lemma moduli of smoothness = 0 => anisotropic polynomials - 2} in the first step, \eqref{Proof Jackson Inequaltiy Assumption 1} in the second, and the choice of $P_m$ in the third one. Now we can use this result together with the Marchaud inequalities from \cref{Theorem Marchaud temporal modulus of smoothness} and \cref{Theorem Marchaud spacial modulus of smoothness - anisotropic}, respectively, as well as the monotonicity of the moduli of smoothness in order to obtain
    \begin{align}\label{Proof Jackson Inequaltiy result with Marchaud}
	\omega_{1,dir}(g_m, I\times D,\delta)_p^{\mu(p)}&\lesssim_{\,d,p,r_1, r_2,\LipProp(D)} \delta^{\mu(p)}\left(\|g_m\|_{L_p(I\times D)}^{\mu(p)}+\int\limits_\delta^\infty \frac{\omega_{r_i,dir}(g_m,I\times D,s)_p^{\mu(p)}}{s^{\mu(p)+1}}\,ds\right)\notag
	\\ &< \delta^{\mu(p)}\left(1 + \int\limits_\delta^\infty \frac{s^{-\mu(p)-1}}{m^{\mu(p)}}\,ds\right)\lesssim_{\,p}  \delta^{\mu(p)} + m^{-\mu(p)},
\end{align}
    for $dir\in\{t,\bm{x}\}$, corresponding $i\in\{1,2\}$, and $\delta \in [0,\infty)$, if $r_i>1$. If $r_i=1$, the bound in \eqref{Proof Jackson Inequaltiy result with Marchaud} is already given as a consequence of \eqref{Proof Jackson Inequaltiy properties g_m 2} and the monotonicitiy of the moduli of smoothness. So for every $\varepsilon>0$ there exist 
    \begin{align*}
        \delta_0(\varepsilon)=\delta_0(\varepsilon, d,p,r_1, r_2, \LipProp(D))>0 \quad \text{and} \quad m_0(\varepsilon)=m_0(\varepsilon,p)\in\N
    \end{align*}    
    such that
    \begin{equation}\label{Proof Jackson Inequaltiy Konstruktion Treppenfunktion 1}
	   \omega_{1,t}(g_m, I\times D,\delta)_p+ \omega_{1,\bm{x}}(g_m, I\times D,\delta)_p\lesssim_{\,d,p,r_1, r_2,\LipProp(D)}  \varepsilon
    \end{equation}
    for $\delta \in (0, \delta_0(\varepsilon))$ and $m\in \N$ with $m\ge m_0(\varepsilon)$. Now we  use \cref{Lemma_Approximation_step_functions} and obtain step functions $\varphi_{m,n}$ for any $m,n\in \N$ satisfying 
    \begin{align}\label{Proof Jackson Inequaltiy Konstruktion Treppenfunktion 2}
    	\|g_m - \varphi_{m, n}\|_{L_p(I\times D)}\lesssim_{\,d,p,\Lip(D), \LipDelta(D)} \omega_{1,t}\left(g_m, I\times D,n^{-1}\right)_p+ \omega_{1,\bm{x}}\left(g_m, I\times D,n^{-1}\right)_p.
    \end{align}
    Setting $n_0(\varepsilon):= \left\lceil \delta_0(\varepsilon)^{-1}\right\rceil + 1$ then yields $n^{-1}<\delta_0(\varepsilon)$ for $n\ge n_0(\varepsilon)$. For such $n$ and $m\ge m_0(\varepsilon)$ we   thus derive  
    \begin{equation}\label{Proof Jackson Inequaltiy properties step function 1}
    	\|g_m - \varphi_{m, n}\|_{L_p(I\times D)}\lesssim_{\,d,p,r_1, r_2, \LipProp(D)} \varepsilon.
    \end{equation}
    by combining \eqref{Proof Jackson Inequaltiy Konstruktion Treppenfunktion 1} and \eqref{Proof Jackson Inequaltiy Konstruktion Treppenfunktion 2}. Further, the property \ref{Lemma_Approximation_step_functions_enum_5} of \cref{Lemma_Approximation_step_functions}  implies for the step functions 
    \begin{equation}\label{Proof Jackson Inequaltiy properties step function 2}
    	\|\varphi_{m,n}\|_{L_p(\R^{d+1})}\lesssim_{\,d,p, \Lip(D), \LipDelta(D)} \|g_m\|_{L_p(I\times D)}=1 .
    \end{equation}
    Now we can exploit the representation formula for the step functions. For any $m,n\in \N$, there is a set of cubes of side length $n^{-1}$ which we denote as $\left(Q_k^{(m,n)}\right)_{k\in \mathcal{K}_{m,n}}$, $\mathcal{K}_{m,n}\lesssim_{\,d}n^d$, such that 
    \begin{equation*}
	   \varphi_{m,n} =  \sum\limits^{n}_{i=1}\sum\limits^{\mathcal{K}_{m,n}}_{k=1} \mathds{1}_{\big[\frac{i-1}{n},\frac{i}{n}\big)\times Q_k^{(m,n)}}c_{i,k}^{(m,n)}.
    \end{equation*}
    This allows us to estimate
\begin{align*}
	\|\varphi_{m,n}\|_{L_\infty(I\times D)}&= \sup\limits_{\substack{i=1,\dots,n \\ k=1,\dots, \mathcal{K}_{m,n}}} |c_{i,k}^{(m,n)}| \le \left(\sum\limits^{n}_{i=1}\sum\limits^{\mathcal{K}_{m,n}}_{k=1} \left|c_{i,k}^{(m,n)}\right|^p\right)^\frac{1}{p}
	\\&=   \left(n^{d+1}\sum\limits^{n}_{i=1}\sum\limits^{\mathcal{K}_{m,n}}_{k=1} \left|c_{i,k}^{(m,n)}\right|^p |I_i^{(m,n)}\times Q_k^{(m,n)}|\right)^\frac{1}{p}\\
    &=   n^{\frac{d+1}{p}} \|\varphi_{m,n}\|_{L_p(\R^{d+1})} \le C(d,p, \Lip(D), \LipDelta(D))n^{\frac{d+1}{p}},
\end{align*}
for $p<\infty$, where \eqref{Proof Jackson Inequaltiy properties step function 2} has been applied in the last step. With $\frac{d+1}{\infty}:=0$, this result obviously also holds true for $p=\infty$. Now set $M:=n^{\frac{d+1}{p}}C(d,p, \Lip(D), \LipDelta(D))$ and consider the finite set $\Phi(\varepsilon)$ of all space-time step functions on the regular grid of $(d+1)$-dimensional cubes on $[0,1]^{d+1}$ of side-length $n^{-1}$, where on each of the space-time cubes values of the form $z\varepsilon|D|^{-\frac{1}{p}}$ with $z\in \mathbb{Z}$, \mbox{$0\le|z|\le \left\lceil\varepsilon^{-1}|D|^{\frac{1}{p}} M \right\rceil$}, are attained. Then, for any $n,m\in \N$, it follows
\begin{align}\label{Proof Jackson Inequaltiy properties step function 3}
\min\limits_{\varphi\in \Phi(\varepsilon)}\|\varphi - \varphi_{m,n}\|_{L_p(I\times D)}\le&
 \left(\:\int\limits_{I\times D}\left(\varepsilon|D|^{-\frac{1}{p}}\right)^p\,d(t,\bm{x})\right)^\frac{1}{p}=\varepsilon,
\end{align}
since the value on each step of $\varphi$ can be chosen with maximal difference of $\varepsilon|D|^{-\frac{1}{p}}$ to $\varphi_{m,n}$ on $I\times D$ as well as $0$ outside of $I\times D$, and $I\times D\subset [0,1]^{d+1}$ holds true. For $m\in\N$ and $\varepsilon>0$ we define $\varphi^\varepsilon_m:=\varphi_{m+m_0(\varepsilon), n_0(\varepsilon)}$ and $g_m^\varepsilon:= g_{m+m_0(\varepsilon)}$. According to the above calculation, there is always one $\varphi_\varepsilon\in\Phi(\varepsilon$) such that for every $m\in \N $,
\begin{align*}
	\|g_m^\varepsilon-\varphi_\varepsilon\|_{L_p(I\times D)}\lesssim_{\,p} \|g_m^\varepsilon-\varphi_m^\varepsilon\|_{L_p(I\times D)} + \|\varphi_m^\varepsilon-\varphi_\varepsilon\|_{L_p(I\times D)}\lesssim_{\,d,p,r_1, r_2, \LipProp(D)} \varepsilon,
\end{align*}
holds true, due to \eqref{Proof Jackson Inequaltiy properties step function 1} and \eqref{Proof Jackson Inequaltiy properties step function 3}. Now choose $\varepsilon_k:=k^{-1}$ and set $\varphi_{k}:=\varphi_{\varepsilon_k}$ as well as $g_{m,k}:=g^{\varepsilon_k}_{m}$ for $k\in\N$. Since we can assume the mapping $(\varepsilon\rightarrow m_0(\varepsilon))$ to be monotonically decreasing without loss of generality, $(g_{m,k+1})_{m\in\N}$ is a subsequence of $(g_{m,k})_{m\in\N}$ for any $k\in \N$. Therefore, in particular, 
    \begin{align}\label{Proof Jackson Inequaltiy properties step function 4.5}
        \|g_{m,l}-\varphi_k\|_{L_p(I\times D)}\lesssim_{\,d,p,r_1, r_2, \LipProp(D)} \varepsilon_k=k^{-1}
    \end{align}    
    if $l\ge k$ for any $m\in \N$. For $k, k'\in \N$ with $k'\ge k$ this means
    \begin{equation*}
        \|\varphi_{k}-\varphi_{k'}\|_{L_p(I\times D)} \lesssim_{\,p} \|\varphi_{k}-g_{k,k'}\|_{L_p(I\times D)} + \|\varphi_{k'}-g_{k,k'}\|_{L_p(I\times D)} \le k^{-1}+(k')^{-1}\lesssim k^{-1} \xrightarrow{k\rightarrow \infty} 0.
    \end{equation*}
    So we can conclude that $(\varphi_k)_{k\in \N}$ is a Cauchy sequence in $L_p(I\times D)$, which is a complete space, and therefore the sequence converges to some $\psi\in L_p(I\times D)$. Next, we will show some properties of $\psi$. Let $P\in \Pi_{t,\bm{x}}^{r_1, r_2}(I\times D)$ be arbitrary, then
    \begin{align*}
        \|P-\psi\|^{\mu(p)}_{L_p(I\times D)}&\ge \|P- g_{k,k}\|_{L_p(I\times D)}^{\mu(p)} - \|g_{k,k}-\varphi_k\|_{L_p(I\times D)}^{\mu(p)} - \|\varphi_k-\psi\|_{L_p(I\times D)}^{\mu(p)}
        \\&\gtrsim_{\, d, p, r_1, r_2, \LipProp(D)} 1 - k^{-\mu(p)} - \|\varphi_k-\psi\|_{L_p(I\times D)}^{\mu(p)}\xrightarrow{k\rightarrow\infty}1.
    \end{align*}
    by the reversed $\mu(p)$-triangle inequality, \eqref{Proof Jackson Inequaltiy properties 1.5}, \eqref{Proof Jackson Inequaltiy properties step function 4.5}, and $\varphi_k\xrightarrow{k\rightarrow\infty}\psi$ in $L_p(I\times D)$. This especially shows that the $L_p(I\times D)$-distance from $\psi$ to $\Pi^{r_1,r_2}_{t,\bm{x}}(I\times D)$ is positive. Next, we use the result from \cref{Rem:Difference operators} and \cref{Rem:Moduli of smoothness} for the estimation of the \mbox{(quasi-)}norm of differences and moduli of smoothness, respectively, by the \mbox{(quasi-)}norm of the functions itself, $\varphi_k\xrightarrow{k\rightarrow\infty}\psi$ in $L_p(I\times D)$, as well as the $\mu(p)$-subadditivity the moduli of smoothness, with $dir\in\{t,\bm{x}\}$ and the corresponding $i\in\{1,2\}$, $\delta_i\in\{|I|, \diam(D)\}$, to obtain
    \begin{align*}
        \omega_{r_i, dir}(\psi,I\times D, \delta_i)_p^{\mu(p)}&\le \lim\limits_{k\rightarrow\infty} \omega_{r_i, dir}(\varphi_k,I\times D, \delta_i)_p^{\mu(p)}
        \\&\le \lim\limits_{k\rightarrow\infty} \left(\omega_{r_i, dir}(g_{k,k},I\times D, \delta_i)_p^{\mu(p)} + \omega_{r_i, dir}(\varphi_k - g_{k,k},I\times D, \delta_i)_p^{\mu(p)}\right) \\
        &\le \Big(1+ 2^{\max\{r_1,r_2\}}\Big) \lim\limits_{k\rightarrow\infty} k^{-1}=0.
    \end{align*}
    In the last step, \eqref{Proof Jackson Inequaltiy properties g_m 2} has additionally been used. Therefore, \mbox{$\psi\in \Pi^{r_1,r_2}_{t,\bm{x}}(I\times D)$} according to \cref{Lemma moduli of smoothness = 0 => anisotropic polynomials}. This is a contradiction, so the assertion is shown.
\end{proof}

In order to show that there is a uniform Jackson estimate constant for all bounded Lipschitz domains with the same Lipschitz properties, we combine the convergence result from \cref{lem:convergence_of_Lipschitz_domains} with the technique from the proof \cite[Thm.~1.4]{DL04}, where the existence of a uniform Jackson estimate constant for convex domains has been proven in the stationary case. 

\begin{proof}[Proof of \cref{thm:Jackson}.]
    As in the proof of \cref{lem:last_step_to_thm:Jackson}, we can assume that $(r_1, r_2)\in \N^2\setminus \{(1,1)\}$. If the assertion does not hold true, there is a sequence of bounded Lipschitz domains $(D_m)_{m\in \N}$, which share the same Lipschitz properties, i.e., $\LipProp(D_{m_1})=\LipProp(D_{m_2})$ for $m_1, m_2\in \N$, and $g_m\in L_p(I\times D_m)$, $m\in \N$, with $I=[0,1]$, $\|g_m\|_{L_p(I\times D_m)}=1$, and 
    \begin{align}\label{thm:Jackson - proof equation 1}
        	1=\min\limits_{P\in \Pi^{r_1, r_2}_{t,\bm{x}}(I\times D_m)} \|g_m-P\|_{L_p(I\times D_m)} > m \left(\left(\omega_{r_1,t}(g_m, I\times D_m, |I|)_p + \omega_{r_2, \bm{x}}(g_m, I\times D_m, \diam(D_m)\right)_p\right).
    \end{align}
    In contrast to \eqref{Proof Jackson Inequaltiy Assumption 1}, the $g_m$-functions are now defined on generally distinct domains. By scaling, $\diam(D_m)=1$ and $D_m\subset [0,1]^d$ can be assumed (temporarily) for any $m\in \N$. Now making use of \cref{lem:convergence_of_Lipschitz_domains} allows us to assume, without loss of generality, that there exists a bounded Lipschitz domain $D$ with $\diam(D)=1$, $D\subset[0,1]^d$, and $|D\triangle D_m|\xrightarrow{m\rightarrow\infty}0$. 
    
    Choosing $\varepsilon>0$ and applying the same constructions as in the proof of \cref{lem:last_step_to_thm:Jackson}, now yields uniformly the same $m_0(\varepsilon)$ and $n_0(\varepsilon)$ for every $D_m$, since all the bounded Lipschitz domains $(D_m)_{m\in \N}$ share the same Lipschitz properties. Furthermore, it again yields a sequence of step functions $(\varphi_k)_{k\in \N}$, such that 
    \begin{align}\label{thm:Jackson - proof equation 2}
        \|g_{m,l}-\varphi_k\|_{L_p(I\times D_{m,l})}\lesssim_{\,d,p,r_1, r_2, \LipProp(D_1)}k^{-1}
    \end{align}
    for any $m,l,k\in \N$ with $l\ge k$, where $g_{m,l}:=g_{m+m_0(\varepsilon_l)}$ is well-defined on $D_{m,l}:=D_{m+m_0(\varepsilon_l)}$ with $\varepsilon_l:=\frac{1}{2l}$. 
    Exploiting  the convergence $|D\triangle D_{k,l}|\xrightarrow{l\rightarrow\infty}0$, we estimate
    \begin{align}\label{thm:Jackson - proof equation 3}
        \min\limits_{P\in \Pi^{r_1, r_2}_{t,\bm{x}}}\|\varphi_k-P\|_{L_p(I\times D)}&= \nonumber \lim\limits_{l\rightarrow \infty} \min\limits_{P\in \Pi^{r_1, r_2}_{t,\bm{x}}}\|\varphi_k-P\|_{L_p(I\times D_{k,l})}
        \\&\gtrsim_{\,p}  1 - \lim\limits_{l\rightarrow \infty} \|g_{k,l}-\varphi_k\|_{L_p(I\times D_{k,l})} \gtrsim_{\,d,p,r_1, r_2, \LipProp(D_1)} 1-k^{-1}\xrightarrow{k\rightarrow \infty}1,
    \end{align}
    by using the reversed $\mu(p)$-triangle inequality together with the first part of \eqref{thm:Jackson - proof equation 1} as well as \eqref{thm:Jackson - proof equation 2}. Similarly, for $h\in \R$ with $|h|\le 1 = |I|$ we obtain
    \begin{align*}
        \|\Delta_{h,t}^{r_1}\varphi_k\|_{L_p(I_{r_1, h}\times D)}&= \lim\limits_{l\rightarrow \infty} \|\Delta_{h,t}^{r_1}\varphi_k\|_{L_p(I_{r_1, h}\times D_{k,l})}
        \\&\lesssim_{\,p}\lim\limits_{l\rightarrow \infty} \|\Delta_{h,t}^{r_1}g_{k,l}\|_{L_p(I_{r_1, h}\times D_{k,l})} + \|\Delta_{h,t}^{r_1}(g_{k,l}-\varphi_k)\|_{L_p(I_{r_1, h}\times D_{k,l})}
        \\&\lesssim_{\,p, r_1}\lim\limits_{l\rightarrow \infty} \omega_{r_1,t}(g_{k,l},I\times D_{k,l},1)_p + \|g_{k,l}-\varphi_k\|_{L_p(I\times D_{k,l})}
        \\&\lesssim_{\,d,p,r_1, r_2, \LipProp(D_1)}\lim\limits_{l\rightarrow \infty} \frac{1}{k+m_0(\varepsilon_l)} + k^{-1}= k^{-1},
    \end{align*}
    where the estimate between the $L_p$-norms of difference operators of different order from \cref{Rem:Difference operators} has also been used in the third step as well as the right side of \eqref{thm:Jackson - proof equation 1} in the last one. The case of the spatial differences is very similar. There, for every $\bm{h}\in \R^d$ with $|\bm{h}|\le 1 = \diam(D) = \diam(D_{k,l})$, $k,l\in \N$, $|D_{r_2, \bm{h}}\triangle (D_{k,l})_{r_2, \bm{h}}|\xrightarrow{l\rightarrow\infty}0$ holds true, due to \cref{lem:convergence_of_Lipschitz_domains}. Now, almost the same calculation as before yields 
    \begin{align*}
        \|\Delta_{\bm{h},\bm{x}}^{r_2}\varphi_k\|_{L_p(I\times D_{r_2, \bm{h}})}\lesssim_{\,d,p,r_1, r_2, \LipProp(D_1)}k^{-1}.
    \end{align*}
    Taking the supremum over $h$ and $\bm{h}$ in the two above estimates, we derive
    \begin{align}\label{thm:Jackson - proof equation 4}
        \omega_{r_1, t}(\varphi_k, I\times D, |I|)_p+ \omega_{r_2, \bm{x}}(\varphi_k, I\times D, \diam(D))_p \lesssim_{\,d,p,r_1, r_2, \LipProp(D_1)} k^{-1}\xrightarrow{k\rightarrow \infty}0.
    \end{align}
    Lastly, \eqref{thm:Jackson - proof equation 3} and \eqref{thm:Jackson - proof equation 4} together are a contradiction to \cref{lem:last_step_to_thm:Jackson} applied to the Lipschitz cylinder $I\times D$, so the assertion holds true. If $D$ is additionally convex, then the dependency of the inequality constant on $\LipProp(D)$ can additionally be removed by combining the results of \cref{lem:Behaviour under affine linear transformations} and \cref{thm:Lipschitz domain - convex sets}.
\end{proof}

\section{Approximation of anisotropic Besov functions}\label{Section:Approximation_with_Besov_functions}

Now we will define function spaces as subspaces of $L_p(I\times D)$-functions with some (local) regularity constraint measured by the (decay of) the temporal and spatial moduli of smoothness. Since this approach is very similar to the one for (isotropic) Besov spaces but allows for the possibility of different smoothness in time and space, we call these spaces \textbf{anisotropic Besov spaces}. As described in \cref{sect:Introduction}, there are similar approaches in the literature but they were not suited for our purposes.

\subsection{Anisotropic Besov spaces}

\begin{defi}\label{Definition anisotropic Besov space}
	Let $p,q\in (0,\infty]$, $s_1, s_2\in (0,\infty) $, $r_i:=\lfloor s_i \rfloor +1$, $i=1,2$, and put
\begin{align*}
	|f|_{B_{p,q}^{s_1, s_2}(I\times D)}:= 
		\begin{cases} 
			\left(\displaystyle{\int\limits_0^\infty} \delta^{-s_1 q} \omega_{r_1,t}(f, I\times D, \delta)_p^q\frac{d\delta}{\delta} + \displaystyle{\int\limits_0^{\infty}} \delta^{-s_2 q} \omega_{r_2,\bm{x}}(f, I\times D, \delta)_p^q\frac{d\delta}{\delta}\right)^\frac{1}{q},& q < \infty,
			\\\sup\limits_{\delta \in [0,\infty)} \delta^{-s_1} \omega_{r_1,t}(f, I\times D, \delta)_p + \sup\limits_{\delta \in [0,\infty)} \delta^{-s_2} \omega_{r_2,\bm{x}}(f, I\times D, \delta)_p,& q = \infty.
		\end{cases}
\end{align*}
Now we can define the corresponding anisotropic Besov space as 
\begin{align*}
	B^{s_1, s_2}_{p,q}(I\times D):=\left\{f\in L_p(I\times D)\: \big| \: \|f\|_{B_{p,q}^{s_1, s_2}}(I\times D):= \|f\|_{L_p(I\times D)} +|f|_{B_{p,q}^{s_1, s_2}(I\times D)}<\infty \right\}.
\end{align*}
\end{defi}

\begin{rem}\label{Rem - equivalence of Besov(quasi-)seminorms}
As in the isotropic case, $|f|_{B_{p,q}^{s_1, s_2}(I\times D)}$ is a \mbox{(quasi-)}seminorm and $\|f\|_{B_{p,q}^{s_1, s_2}(I\times D)}$ is a \mbox{(quasi-)}norm. Further, as in the isotropic case, one can show that the equivalences
    \begin{align*}
    &\left(\displaystyle{\int\limits_0^\infty}  \delta^{-s_1 q} \omega_{r_1,t}(f, I\times D, \delta)_p^q\frac{d\delta}{\delta} + \displaystyle{\int\limits_0^{\infty}} \delta^{-s_2 q} \omega_{r_2,\bm{x}}(f, I\times D, \delta)_p^q\frac{d\delta}{\delta}\right)^\frac{1}{q}
    \\&\qquad \sim_{\,p,q,s_1,s_2,|I|,\diam(D)}\left(\displaystyle{\int\limits_0^1} \delta^{-s_1 q} \omega_{r_1,t}(f, I\times D, \delta)_p^q\frac{d\delta}{\delta} + \displaystyle{\int\limits_0^{1}} \delta^{-s_2 q} \omega_{r_2,\bm{x}}(f, I\times D, \delta)_p^q\frac{d\delta}{\delta}\right)^\frac{1}{q}
    \\ &\qquad \sim_{\, p,q,s_1,s_2,a,b}\left(\sum\limits_{n=0}^{\infty} a^{ns_1q} \omega_{r_1,t}\left(f, I\times D, a^{-n}\right)_p^q+ \sum\limits_{n=0}^{\infty} b^{ns_2q} \omega_{r_2,\bm{x}}\left(f, I\times D, b^{-n}\right)_p^q\right)^\frac{1}{q}
    \end{align*}
    hold true for $q\in (0,\infty)$ and arbitrary $a,b\in(1,\infty)$. The case $q=\infty$ can be treated similarly with the usual modifications. Also the following embeddings hold true:
	\begin{enumerate}[label=(\roman*)]
		\item $B^{s_1, s_2}_{p,q_1}(I\times D)\hookrightarrow B^{s_1, s_2}_{p,q_2}(I\times D),\text{ if }q_1\le q_2$,
		\item $B^{s_1, s_2}_{p_1,q}(I\times D)\hookrightarrow B^{s_1, s_2}_{p_2,q}(I\times D),\text{ if }p_1\ge p_2\ \text{ and }\ |I\times D|<\infty$,
		\item $B^{s_1, s_2}_{p,q_1}(I\times D)\hookrightarrow B^{\tilde{s_1}, \tilde{s_2}}_{p,q_2}(I\times D),\text{ if }s_i> \tilde{s_i}, i=1,2$.
	\end{enumerate}
    All of these results can be proven following the procedure in \cite[Ch.~2.10]{DL93}. 
\end{rem}

If we choose $p=q$, we also obtain an equivalent \mbox{(quasi-)}norm if we use the averaged moduli of smoothness instead of the supremum versions.

\begin{lem}\label{Lemma equivalence of (quasi-)seminorms for polyhedral domains}
Let $q\in (0,\infty]$, $s_1, s_2 \in (0, \infty)$, and $r_i:=\lfloor s_i \rfloor +1$, $i=1,2$. Then
	\begin{align*}
	 |f|^{\bullet}_{B_{q,q}^{s_1,s_2}(I\times D)}:= \begin{cases} 
			\left(\displaystyle{\int\limits_0^\infty} \delta^{-s_1 q} \mathrm{w}_{r_1,t}(f, I\times D, \delta)_q^q\frac{d\delta}{\delta} + \displaystyle{\int\limits_0^{\infty}} \delta^{-s_2 q} \mathrm{w}_{r_2,\bm{x}}(f, I\times D, \delta)_q^q\frac{d\delta}{\delta}\right)^\frac{1}{q},& q < \infty,
			\\\sup\limits_{\delta \in [0,\infty)} \delta^{-s_1} \mathrm{w}_{r_1,t}(f, I\times D, \delta)_q + \sup\limits_{\delta \in [0,\infty)} \delta^{-s_2} \mathrm{w}_{r_2,\bm{x}}(f, I\times D, \delta)_q,& q = \infty
		\end{cases}
	\end{align*}
	is a \mbox{(quasi-)}seminorm equivalent to $|f|_{B_{q,q}^{s_1,s_2}(I\times D)}$ with equivalence constants only depending on $d,q,s_1, s_2$, and $\LipProp(D)$. 
\end{lem}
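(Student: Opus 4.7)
The bound $|f|^{\bullet}_{B^{s_1,s_2}_{q,q}(I\times D)} \le |f|_{B^{s_1,s_2}_{q,q}(I\times D)}$ is immediate from the pointwise inequality $w_{r,\cdot}(f,\cdot,\delta)_q \le \omega_{r,\cdot}(f,\cdot,\delta)_q$, and the case $q=\infty$ is trivial since the two versions of the moduli coincide by definition (see \cref{def:moduli_of_smoothness}). Hence the remaining task is to prove the reverse inequality when $q<\infty$. Since the temporal and spacial pieces are independent and treated analogously (only the reference lemma and threshold change), I describe only the temporal component; the spacial one is handled in the same way with \cref{Lemma equivalence averaged spacial modulus of smoothness} in place of \cref{Lemma equivalence averaged temporal modulus of smoothness}, and that step is what introduces the dependence on $\LipProp(D)$ into the equivalence constant.

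Set $\delta_0 := |I|/(4r_1)$, the range of equivalence in \cref{Lemma equivalence averaged temporal modulus of smoothness}, and split
\begin{equation*}
\int_0^{\infty} \delta^{-s_1q-1}\omega_{r_1,t}(f,I\times D,\delta)_q^q\,d\delta = \int_0^{\delta_0} + \int_{\delta_0}^{\infty}.
\end{equation*}
On $(0,\delta_0]$ the pointwise bound $\omega_{r_1,t}\lesssim w_{r_1,t}$ from that lemma immediately controls the first piece by the corresponding averaged integral. For the tail, note that $I_{r_1,h}=\emptyset$ whenever $|h|>|I|/r_1 = 4\delta_0$, so $\omega_{r_1,t}(f,\cdot,\delta)_q$ is constant for $\delta\ge 4\delta_0$; combined with the scaling from \cref{Rem:Moduli of smoothness} and one final application of \cref{Lemma equivalence averaged temporal modulus of smoothness} at $\delta_0$, one obtains $\omega_{r_1,t}(f,\cdot,\delta)_q \lesssim w_{r_1,t}(f,\cdot,\delta_0)_q$ uniformly in $\delta\ge\delta_0$, which yields
\begin{equation*}
\int_{\delta_0}^{\infty}\delta^{-s_1q-1}\omega_{r_1,t}(f,\cdot,\delta)_q^q\,d\delta \;\lesssim\; \delta_0^{-s_1q}\,w_{r_1,t}(f,\cdot,\delta_0)_q^q.
\end{equation*}

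The last ingredient is to reabsorb this tail into the averaged seminorm. Chaining through $\omega$, one verifies $w_{r_1,t}(f,\cdot,\delta)_q \sim w_{r_1,t}(f,\cdot,\delta_0)_q$ for $\delta \in [\delta_0/2, \delta_0]$ by applying \cref{Lemma equivalence averaged temporal modulus of smoothness} at both $\delta$ and $\delta_0$ together with the monotonicity and scaling of $\omega_{r_1,t}$ recorded in \cref{Rem:Moduli of smoothness}. Integrating this comparison gives
\begin{equation*}
\delta_0^{-s_1q}\,w_{r_1,t}(f,\cdot,\delta_0)_q^q\;\lesssim\; \int_{\delta_0/2}^{\delta_0}\delta^{-s_1q-1}w_{r_1,t}(f,\cdot,\delta)_q^q\,d\delta,
\end{equation*}
which is bounded by the temporal part of $|f|^{\bullet}_{B^{s_1,s_2}_{q,q}}$ raised to the $q$-th power. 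The main subtlety is purely bookkeeping: both the tail upper bound and the reabsorption must be performed at the \emph{same} fixed scale $\delta_0$, so that the factors $\delta_0^{-s_1q}$ cancel exactly and the final equivalence constant picks up no spurious dependence on $|I|$ (or $\diam(D)$ in the spacial case), consistent with the dependencies $d,q,s_1,s_2,\LipProp(D)$ claimed in the statement.
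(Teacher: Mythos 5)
Your proof is correct and uses essentially the same ingredients as the paper: split the integral at the threshold $\delta_0$ from \cref{Lemma equivalence averaged temporal modulus of smoothness} (resp.\ \cref{Lemma equivalence averaged spacial modulus of smoothness}), invoke the equivalence on the low range, and fold the tail back into the averaged seminorm via monotonicity, the scaling estimates of \cref{Rem:Moduli of smoothness}, and the observation that the supremum modulus stabilizes once the difference set becomes empty. The only real difference is organizational: the paper normalizes $\diam(D)=1$ and uses a three-way split $(0,\delta_0]\cup(\delta_0,\diam D]\cup(\diam D,\infty)$ with the chain $C_\omega\lesssim B_\omega\lesssim A_\omega$, whereas you use a two-way split and reabsorb the tail directly through the comparison $w_{r_i}(f,\cdot,\delta)\sim w_{r_i}(f,\cdot,\delta_0)$ on $[\delta_0/2,\delta_0]$; both yield the same constant dependencies, and your bookkeeping of the $\delta_0^{-s_iq}$ factor correctly explains why no spurious dependence on $|I|$ or $\diam(D)$ survives without explicit normalization.
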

\begin{rem}
    The corresponding results for isotropic moduli are mentioned in \cite[p.~403]{BDDP02} and \cite[p.~2144]{GM14}, but no explicit proofs have been given there.
\end{rem}
\begin{proof}
    For $q=\infty$, there is nothing to show, since then $\omega_{r_i, dir}(\cdot, I\times D, \delta)_\infty=\mathrm{w}_{r_i, dir}(\cdot, I\times D, \delta)_\infty$ for $dir \in \{t, \bm{x}\}$, the corresponding $i\in \{1,2\}$, and $\delta\in [0,\infty)$. Therefore, let $q<\infty $ from now on. The \mbox{(quasi-)}seminorm properties of $|\cdot|^{\bullet}_{B_{q,q}^{s_1,s_2}(I\times D)}$ can be derived analogously to those of $|\cdot|_{B_{q,q}^{s_1,s_2}(I\times D)}$. Since the averaged moduli are always less or equal then the supremum versions, $ |\cdot|^{\bullet}_{B_{q,q}^{s_1,s_2}(I\times D)}\le |\cdot|_{B_{q,q}^{s_1,s_2}(I\times D)}$ must hold. For the other direction, we will first consider the spatial integral for $q<\infty$. Due to the first part of \cref{lem:Behaviour under affine linear transformations} and the invariance of $\LipProp(D)$ under scaling, we may additionally assume $\diam(D)=1$ without loss of generality. We split the spatial integral as follows:
	\begin{align}\label{Lemma equivalence of (quasi-)seminorms for polyhedral domains - equation 1}
		\int\limits_0^\infty \delta^{-s_2 q}\omega_{r_2,\bm{x}}(f, I\times D, \delta)_q^q  \frac{d\delta}{\delta} 
		=&\int\limits_0^{\frac{\LipDelta(D)}{4r_2}} \delta^{-s_2 q}\omega_{r_2,\bm{x}}(f, I\times D, \delta)_q^q \frac{d\delta}{\delta} + \int\limits_{\frac{\LipDelta(D)}{4r_2}}^1 \delta^{-s_2 q}\omega_{r_2,\bm{x}}(f, I\times D, \delta)_q^q \frac{d\delta}{\delta}
		\\ &\ + \int\limits_{1}^\infty \delta^{-s_2 q}\omega_{r_2,\bm{x}}(f, I\times D, \delta)_q^q \frac{d\delta}{\delta} := A_\omega + B_\omega + C_\omega. \notag
	\end{align}
	For $A_\omega$, we can simply apply \cref{Lemma equivalence averaged spacial modulus of smoothness} to obtain
	\begin{align}\label{Lemma equivalence of (quasi-)seminorms for polyhedral domains - equation 2}
		A_\omega\lesssim_{\, d,q,s_2,\LipProp(D)} \int\limits_0^{\frac{\LipDelta(D)}{4r_2}} \delta^{-s_2 q}\mathrm{w}_{r_2,\bm{x}}(f, I\times D, \delta)_q^q \frac{d\delta}{\delta}\le\int\limits_0^\infty \delta^{-s_2 q}\mathrm{w}_{r_2,\bm{x}}(f, I\times D, \delta)_q^q\frac{d\delta}{\delta}.
	\end{align}
	Further, we can use the monotonicity of the supremum modulus, see \cref{Rem:Moduli of smoothness}, to estimate $B_\omega$ from below
	\begin{align}\label{Lemma equivalence of (quasi-)seminorms for polyhedral domains - equation 3}
		B_\omega &\ge \omega_{r_2,\bm{x}}\left(f, I\times D, \frac{\LipDelta(D)}{4r_2}\right)_q^q \int\limits_{\frac{\LipDelta(D)}{4r_2}}^{1} \delta^{-s_2 q} \frac{d\delta}{\delta}\gtrsim_{\,q,s_2,\LipDelta(D)} \omega_{r_2,\bm{x}}\left(f, I\times D, \frac{\LipDelta(D)}{4r_2}\right)_q^q.
	\end{align}
	Similarly, one can give an upper bound for $C_\omega$, since $D_{r_2, \bm{h}}=\emptyset$ for $|h|\ge 1=\diam(D)$, i.e.,
	\begin{align}\label{Lemma equivalence of (quasi-)seminorms for polyhedral domains - equation 4}
	C_\omega = \omega_{r_2, \bm{x}}\left(f, I\times D,1\right)_q^q \int\limits_1^\infty \delta^{-s_2q} \frac{d\delta}{\delta} \lesssim_{\,q,s_2} \omega_{r_2, \bm{x}}\left(f,I\times D, \frac{\LipDelta(D)}{4r_2}\right)_q^q \lesssim_{\,q,s_2,\LipDelta(D)}  B_\omega,
	\end{align}
	where we have used the scaling properties of the modulus of smoothness from \cref{Rem:Moduli of smoothness} in the penultimate step as well as \eqref{Lemma equivalence of (quasi-)seminorms for polyhedral domains - equation 3} in the last one. Furthermore, we can estimate $B_\omega$ from above using the scaling properties of the modulus of smoothness and linear substitution $\tilde{\delta}=\frac{\delta \LipDelta(D)}{4r_2}$:
	\begin{align}\label{Lemma equivalence of (quasi-)seminorms for polyhedral domains - equation 6}
		B_\omega &= \int\limits_{\frac{\LipDelta(D)}{4r_2}}^{1} \delta^{-s_2 q}\omega_{r_2,\bm{x}}\left(f, I\times D, \frac{4r_2}{\LipDelta(D)}\frac{\delta \LipDelta(D)}{4r_2}\right)_q^q \frac{d\delta}{\delta}\lesssim_{\, q, s_2, \LipDelta(D)} \int\limits_{\frac{\LipDelta(D)}{4r_2}}^{1} \delta^{-s_2 q}\omega_{r_2,\bm{x}}\left(f, I\times D, \frac{\delta \LipDelta(D)}{4r_2}\right)_q^q \frac{d\delta}{\delta}
		\\&\lesssim_{\, q, s_2, \LipDelta(D)} \int\limits_{\left(\frac{\LipDelta(D)}{4r_2}\right)^2}^{\frac{\LipDelta(D)}{4r_2}} \tilde{\delta}^{-s_2 q}\omega_{r_2,\bm{x}}\left(f, I\times D, \tilde{\delta}\right)_q^q \frac{d\tilde{\delta}}{\tilde{\delta}} \le A_\omega.\notag
	\end{align}
		Now, \eqref{Lemma equivalence of (quasi-)seminorms for polyhedral domains - equation 1}, \eqref{Lemma equivalence of (quasi-)seminorms for polyhedral domains - equation 2}, \eqref{Lemma equivalence of (quasi-)seminorms for polyhedral domains - equation 4}, and \eqref{Lemma equivalence of (quasi-)seminorms for polyhedral domains - equation 6} imply $\int\limits_0^\infty \delta^{-s_2 q}\omega_{r_2,\bm{x}}(f, I\times D, \delta)_q^q \frac{d\delta}{\delta} \lesssim_{\, d,q,s_2, \LipProp(D)} \int\limits_0^\infty \delta^{-s_2 q}\mathrm{w}_{r_2,\bm{x}}(f, I\times D, \delta)_q^q \frac{d\delta}{\delta}$.
	In a similar manner, one can treat the temporal part, which lastly shows the assertion.
\end{proof}

\subsection{Whitney-type estimate}

First, we will need a result for polynomial scaling whose isotropic form is well-known and can be found, for example in \cite[Lem.~3.3]{GM14}.

\begin{lem}\label{lem:Polynomial scaling}
	Let $J$ be a finite interval and $S\subset \R^d$, $d\in \N$, a $d$-dimensional simplex. Additionally, assume $p,q\in(0,\infty]$ and $r_1, r_2\in\N_0$. Then it holds
	\begin{align*}
		\|P\|_{L_p(J\times S)}\sim_{\,d,p,q,r_1, r_2} |J\times S|^{\frac1p - \frac1q} \|P\|_{L_q(J\times S)}\quad \text{for all}\quad  P\in \Pi^{r_1, r_2}_{t,\bm{x}}(J\times S).
	\end{align*}
\end{lem}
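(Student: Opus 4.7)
The plan is a standard scaling-to-a-reference-domain argument exploiting that $\Pi^{r_1,r_2}_{t,\bm{x}}$ is finite-dimensional. First, I would fix a reference prism $\hat J\times\hat S$, where $\hat J:=[0,1]$ and $\hat S$ is the standard $d$-simplex with vertices $0,e_1,\dots,e_d$. Since $S$ is a (non-degenerate) $d$-dimensional simplex there exists an affine bijection $\psi:\hat S\to S$, and likewise an affine bijection $\chi:\hat J\to J$. Setting $\varphi:=(\chi,\psi):\hat J\times\hat S\to J\times S$ we obtain an affine bijection with $|\det D\varphi| = |J\times S|/|\hat J\times \hat S|$. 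Moreover, since $\varphi$ is affine, precomposition preserves the polynomial class: $P\in \Pi^{r_1,r_2}_{t,\bm{x}}(J\times S)$ if and only if $P\circ\varphi\in \Pi^{r_1,r_2}_{t,\bm{x}}(\hat J\times\hat S)$.

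Next, I would establish the equivalence on the reference prism. The space $\Pi^{r_1,r_2}_{t,\bm{x}}(\hat J\times\hat S)$ is finite-dimensional with dimension depending only on $d,r_1,r_2$, so all (quasi-)norms on it are equivalent. In particular,
\begin{equation*}
    \|Q\|_{L_p(\hat J\times\hat S)} \sim_{\,d,p,q,r_1,r_2} \|Q\|_{L_q(\hat J\times\hat S)}\qquad\text{for every } Q\in\Pi^{r_1,r_2}_{t,\bm{x}}(\hat J\times\hat S).
\end{equation*}
This is the only non-algebraic input; I would verify it by noting that on a finite-dimensional vector space with basis $\{m_k\}$, the maps $Q=\sum c_k m_k\mapsto \|Q\|_{L_p}$ and $Q\mapsto\|Q\|_{L_q}$ are (quasi-)norms, hence equivalent, with constants that can be read off from a compactness/continuity argument on the unit sphere of coefficients.

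Finally, I would transfer the equivalence back via the change-of-variables formula (valid for all $p\in(0,\infty]$, with the obvious essential-supremum convention for $p=\infty$):
\begin{equation*}
    \|P\|_{L_p(J\times S)} = |\det D\varphi|^{1/p} \|P\circ\varphi\|_{L_p(\hat J\times\hat S)}= \left(\tfrac{|J\times S|}{|\hat J\times\hat S|}\right)^{1/p}\|P\circ\varphi\|_{L_p(\hat J\times\hat S)},
\end{equation*}
and analogously for $q$. Dividing the two identities and applying the reference estimate to $Q:=P\circ\varphi$ yields $\|P\|_{L_p(J\times S)}\sim_{\,d,p,q,r_1,r_2} |J\times S|^{1/p-1/q}\|P\|_{L_q(J\times S)}$, where the $|\hat J\times\hat S|$-factors have been absorbed into the constant and depend only on $d$.

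There is no real obstacle here; the only point worth emphasizing is that the equivalence constant on $\hat J\times\hat S$ is independent of the shape of $S$ (since we always pull back to the same standard simplex) and independent of $J$ (since we pull back to $[0,1]$), so the final constant depends only on $d,p,q,r_1,r_2$. The whole argument is the exact space-time analogue of \cite[Lem.~3.3]{GM14}.
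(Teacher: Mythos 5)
Your proposal is correct and matches the paper's proof essentially verbatim: both reduce to the reference prism $[0,1]\times S_{\standard}$, invoke equivalence of (quasi-)norms on the finite-dimensional space $\Pi^{r_1,r_2}_{t,\bm{x}}([0,1]\times S_{\standard})$, and transfer back via the Jacobi change-of-variables formula for an affine bijection of product form (which is needed so that the anisotropic polynomial class is preserved). Your observation that the final constant depends on $d,p,q,r_1,r_2$ is also correct and reveals a small typo in the lemma statement, where the subscript $s_1,s_2$ should read $r_1,r_2$.
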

\begin{proof}
	For $J=[0,1]$ and $S=S_{\standard}$ the $d$-dimensional reference simplex, i.e., 
	\begin{equation*}
		S_{\standard}=\left\{x\in\R^d \mid x_i\ge 0, \quad i=1,\dots,d, \quad \sum\limits_{i=1}^{d} x_i \le 1\right\},
	\end{equation*}
	this follows directly by $|[0,1]\times S_{\standard}|=|S_{\standard}|\sim_{\,d}1$ and the \mbox{(quasi-)}norm-equivalence in the finite dimensional vector space $\Pi^{r_1, r_2}_{t,\bm{x}}([0,1]\times S_{\standard})$. For general $J\times S$ as in the requirement, this is a consequence of the Jacobi transformation theorem with respect to a bijective, affine mapping $\varphi: J\times S\rightarrow [0,1]\times S_{\standard}$. 
\end{proof}

Now we can show the following precursor to \cref{thm:Whitney}, whose proof transfers the ideas of the proofs of \cite[Sect.~6.1,~p.~97-99]{DeV98} and \cite[Prop.~3.12]{AMS23} to our situation.

\begin{lem}\label{Lemma_Whitney_standard_simplex}
	Let $p,q\in(0,\infty]$, $s_1, s_2 \in(0,\infty)$, \mbox{$\frac{1}{\frac{1}{s_1}+\frac{d}{s_2}}-\frac{1}{q}+\frac{1}{p}>0$}, $r_1, r_2 \in\mathbb{N}$, $r_i>s_i$, $i=1,2$, and let $S$ be a $d$-dimensional simplex with $|S|=1$. Then, there exists some \mbox{$P=P(f)\in \Pi^{r_1, r_2}_{t,\bm{x}}([0,1]\times S)$} for every \mbox{$f\in B^{s_1, s_2}_{q,q}([0,1]\times S)$}, such that
	\begin{equation}\label{Lemma_Whitney_standard_simplexEquation}
		\|f-P\|_{L_p([0,1]\times S)}\lesssim_{\,d,p,q,s_1, s_2,\kappa_S} |f|_{B^{s_1, s_2}_{q,q}([0,1]\times S)}.
	\end{equation}
\end{lem}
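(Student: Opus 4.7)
The starting point is the Jackson-type estimate (\cref{thm:Jackson}) applied to $f$ on the unit-volume prism $[0,1]\times S$ with parameters $(r_1,r_2)$: since a shape-regular simplex with $|S|=1$ has $\diam(S)\sim_{d,\kappa_S} 1$ and Lipschitz properties controlled by $d$ and $\kappa_S$ alone, Jackson produces some $P\in\Pi^{r_1,r_2}_{t,\bm{x}}([0,1]\times S)$ with
\begin{equation*}
\|f-P\|_{L_p([0,1]\times S)} \lesssim_{d,p,r_1,r_2,\kappa_S} \omega_{r_1,t}(f,[0,1]\times S,1)_p + \omega_{r_2,\bm{x}}(f,[0,1]\times S,\diam(S))_p.
\end{equation*}
By \cref{Rem:Moduli of smoothness}(iii) the modulus of order $r_i>s_i$ is dominated by that of order $\lfloor s_i\rfloor+1$, so I may as well take $r_i=\lfloor s_i\rfloor+1$ in what follows. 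The task thus reduces to bounding each of the two $L_p$-moduli at the unit scale by $|f|_{B^{s_1,s_2}_{q,q}([0,1]\times S)}$.

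If $p\le q$, the identity $|[0,1]\times S|=1$ together with Hölder's inequality gives $\omega_{r_i,\bullet}(f,\cdot,\delta)_p\le \omega_{r_i,\bullet}(f,\cdot,\delta)_q$ for every $\delta>0$. The $L_q$-modulus is monotone in $\delta$ and saturates once $\delta$ exceeds the corresponding diameter (unit in time, $\sim_{d,\kappa_S} 1$ in space); integrating the Besov integrand over a fixed bounded range of scales just below saturation therefore yields $\omega_{r_i,\bullet}(f,\cdot,1)_q \lesssim_{s_1,s_2,q,d,\kappa_S} |f|_{B^{s_1,s_2}_{q,q}}$, closing this case.

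The harder case $p>q$ calls for an anisotropic subdivision argument. I build dyadic partitions $\mathcal{P}_n$ of $[0,1]\times S$ at level $n\in\N_0$ whose time- and space-steps $h_n^{(t)}=2^{-n}$ and $h_n^{(\bm{x})}\sim 2^{-n s_1/s_2}$ balance the smoothness contributions via $(h_n^{(t)})^{s_1}\sim (h_n^{(\bm{x})})^{s_2}$; writing $\theta:=\tfrac{1}{s_1}+\tfrac{d}{s_2}$ this yields $|R|\sim 2^{-n s_1\theta}$ and $\#\mathcal{P}_n\sim 2^{n s_1\theta}$ for $R\in\mathcal{P}_n$, the spatial refinement being obtained by iterated shape-regular simplicial subdivision of $S$ so that $\kappa_R\lesssim_{d,\kappa_S} 1$ uniformly. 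On each $R$ the rescaled Jackson estimate (via \cref{lem:Behaviour under affine linear transformations}) delivers a polynomial $P_n^R\in\Pi^{r_1,r_2}_{t,\bm{x}}(R)$ whose $L_q(R)$-error is bounded by the sum of the local $L_q$-moduli at scales $h_n^{(t)}$ and $h_n^{(\bm{x})}$. Telescoping between consecutive levels and invoking the polynomial scaling \cref{lem:Polynomial scaling} converts each $\|P_{n+1}^R-P_n^R\|_{L_q(R)}$ into an $L_p(R)$-bound at the cost of a factor $|R|^{1/p-1/q}\sim 2^{n s_1\theta(1/q-1/p)}$; summing $\ell^{\mu(p)}$-style over $R\in\mathcal{P}_n$ and identifying the resulting double sum in $n$ as a discretisation of the averaged Besov seminorm of \cref{Lemma equivalence of (quasi-)seminorms for polyhedral domains} (equivalent to $|f|_{B^{s_1,s_2}_{q,q}}$) produces a geometric series whose ratio $\sim 2^{-s_1\theta(1/\theta-1/q+1/p)}$ lies in $(0,1)$ precisely by the hypothesis $\tfrac{1}{\theta}-\tfrac{1}{q}+\tfrac{1}{p}>0$. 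Letting $n\to\infty$ along the coarsest piece $R_0=[0,1]\times S\in\mathcal{P}_0$ then delivers the desired global polynomial $P$ and the bound on $\|f-P\|_{L_p}$.

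The principal obstacle is this $p>q$ step: producing a uniformly shape-regular anisotropic refinement of the simplex matching the spatial scale $h_n^{(\bm{x})}$ and then performing the telescoping bookkeeping so that the polynomial scaling factor $|R|^{1/p-1/q}$, the cardinality of $\mathcal{P}_n$, and the averaged Besov seminorm all combine into a single geometric series whose convergence is governed exactly by the anisotropic Sobolev gap $\tfrac{1}{\theta}-\tfrac{1}{q}+\tfrac{1}{p}>0$. The isotropic precedents \cite{DeV98} and \cite{AMS23} provide the overall scheme, but the anisotropic scale-matching between time and space, and the need to pass through the averaged modulus via \cref{Lemma equivalence of (quasi-)seminorms for polyhedral domains}, add genuine technical overhead.
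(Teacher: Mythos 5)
Your plan for the main case $p>q$ matches the paper's argument essentially step for step: an anisotropic dyadic subdivision of $[0,1]\times S$ that balances the time and space scales via $(h^{(t)})^{s_1}\sim(h^{(\bm x)})^{s_2}$, shape-regular simplicial bisection so $\kappa$ stays bounded, local Jackson estimates, rescaling polynomials from $L_q$ to $L_p$ through \cref{lem:Polynomial scaling} at the cost of $|R|^{1/p-1/q}$, telescoping, and identification of the remaining sum with (a discretization of) the averaged Besov seminorm via \cref{Lemma equivalence of (quasi-)seminorms for polyhedral domains}; the paper simply indexes by the number of space bisections (so $|R|\sim 2^{-n}$ and $|J|\sim 2^{-ns_2/(s_1 d)}$) rather than by time bisections, which is a cosmetic reparametrization. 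Your treatment of $p\le q$ is a modest but genuine shortcut: the paper reduces $p<q$ to $p=q$ by H\"older on the unit-measure prism and then runs the full telescoping machinery even in the endpoint case $p=q$, whereas you observe that a single Jackson step at the unit scale, combined with the elementary lower bound $|f|_{B^{s_1,s_2}_{q,q}}\gtrsim_{r_i,s_i,q}\omega_{r_i,\bullet}(f,\cdot,1)_q$ obtained by restricting the Besov integral to a fixed band of scales near saturation and using the modulus scaling from \cref{Rem:Moduli of smoothness}, already closes this case; that is correct and a little cleaner. One piece of bookkeeping that your plan compresses into ``a geometric series'' but that the paper treats separately: after telescoping, the sum over $n$ is a geometric weight multiplying the level-$n$ moduli, and passing from that to $|f|_{B^{s_1,s_2}_{q,q}}$ requires distinguishing $q\le\mu(p)$ (apply $\ell^{q/\mu(p)}\hookrightarrow\ell^1$ directly, whence the geometric weight is just bounded by $1$) from $q>\mu(p)$ (apply H\"older with exponent conjugate to $q/\mu(p)$, after which the geometric factor must itself be summable, which is where $\frac{1}{\frac{1}{s_1}+\frac{d}{s_2}}-\frac{1}{q}+\frac{1}{p}>0$ enters). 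You invoke the hypothesis at the right place, so the idea is present, but this case split is a real step that a complete write-up would have to spell out; likewise the closing sentence should say that $(P_n)$ is $L_p$-Cauchy with limit $f$ and that $\|f-P_0\|_{L_p}^{\mu(p)}\le\sum_n\|P_n-P_{n+1}\|_{L_p}^{\mu(p)}$, rather than ``letting $n\to\infty$ along $R_0$.''
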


\begin{proof}
	Since $\Pi^{r_1, r_2}_{t,\bm{x}}([0,1]\times S)\subset \Pi^{r_1', r_2'}_{t,\bm{x}}([0,1]\times S)$ if $r_i\le r_i'$, $i=1,2$, it is sufficient to show the assertion for $r_i=\lfloor s_i \rfloor +1$, $i=1,2 $. Let $f\in B^{s_1, s_2}_{q,q}([0,1]\times S)$ be arbitrary. Since $|[0,1]\times S|=|S|= 1$, Hölder's inequality yields
	\begin{equation}\label{help}
		\|f-P\|_{L_p([0,1]\times S)}\le |[0,1]\times S|^{\frac{1}{p}-\frac{1}{q}}\|f-P\|_{L_q([0,1]\times S)}=\|f-P\|_{L_q([0,1]\times S)},
	\end{equation}	
	for every $P\in \Pi^{r_1, r_2}_{t,\bm{x}}([0,1]\times S)$, if $p<q$. In particular, taking $P=0$ in \eqref{help} implies $f\in L_p([0,1]\times S) $.
	
	So we can assume $p\ge q$ without loss of generality and define an initial triangulation $\calT_0:=\{S\}$ of $S$. For $n\in \N$, let $\calT_n$ be the triangulation generated from $\calT_{n-1}$ through uniform bisection of all the simplices contained in $\calT_{n-1}$, starting with a bisection of an arbitrary but fixed edge of $S$. By bisection, we mean the more-dimensional bisection routine for well-labeled $d$-dimensional simplices from \cite{Mau95} and \cite{Tra97} as it has been summarized in \cite[Sect.~2 and the beginning of Sect.~4]{Ste08}. This procedure yields
	\begin{align}\label{Lemma_Whitney_standard_simplexproof_-1}
		|R|\sim_{\,d} 2^{-n}|S|=2^{-n}\quad \text{and}\quad \LipDelta(R)\sim_{\,\LipProp(R)}\diam(R)\sim_{\,d}2^{-\frac{n}{d}}\diam(S)\sim_{\, d, \kappa_S} 2^{-\frac{n}{d}}|S|=2^{-\frac{n}{d}}, 
	\end{align}
	for $ R\in \calT_n, n\in \N_0$, according to equation $(4.1)$ of \cite[Sect.~4]{Ste08}. Additionally, we define the temporal partition 
	\begin{align*}
		\mathcal{Z}_n:=\left\{\left[\frac{k}{2^n}, \frac{k+1}{2^n}\right]\ \bigg|\ k=0,\dots, 2^n-1\right\}, \quad n\in \N_0,
	\end{align*}
	which is the result of $n$ uniform bisections of the time interval $\mathcal{Z}_0:=\{[0,1]\}$. Further, let $a(n):=\left\lceil \frac{ns_2}{s_1 d} \right\rceil\in \N_0$ and  $\calP_n:= \{J\times R\mid J\in \mathcal{Z}_{a(n)}, \ R\in \calT_n \}$ for $ n\in \N_0$. Then $(\calP_n)_{n\in \N_0}$ is a sequence of space-time partitions of $[0,1]\times S$. For such $n$ and $J\times R\in\calP_n$, Jackson's inequality (beware of the convexity of $R$) implies the existence of a piecewise anisotropic polynomial $P_n$ with $P_n|_{J\times R}\in \Pi^{r_1,r_2}_{t,\bm{x}}(J\times R)$ and
	\begin{align}\label{Lemma_Whitney_standard_simplexproof_0}
		\|f-P_n\|_{L_q\left(J\times R\right)}\lesssim_{\, d,q,s_1, s_2, \mathrm{LipProp}(R)}\omega_{r_1,t}\left(f, J\times R, |J|\right)_q + \omega_{r_2,\bm{x}}\left(f, J\times R, \diam(R)\right)_q.
	\end{align}
    For the simplices $R\in \calT_n$, $n\in \N_0$, we know that $\LipProp(R)\sim_{\,d} \kappa_R$. Together, with $\kappa_R\sim_{\,d}\kappa_S$ due to properties of the used bisection routine, in particular, \cite[Thm.~2.1]{Ste08},
	\cref{Lemma equivalence averaged spacial modulus of smoothness}, and the right part of \eqref{Lemma_Whitney_standard_simplexproof_-1} this shows that, for every $n\in \N_0$ there is some $\delta_n \sim_{\,d, s_2, \kappa_S} 2^{-\frac{n}{d}}\sim_{\,d, s_2, \kappa_S}\diam(R)$, with $\delta_n\le \delta_0(r_2, \LipDelta(R))$ for any $J\times R\in \calP_n$, where we borrow the notation $\delta_0(r_2, \LipDelta(R))$ from \cref{Lemma equivalence averaged spacial modulus of smoothness}. In particular, $\delta_n$ is independent of $J\times R\in \calP_n$ for every $n\in \N_0$. Therefore, applying the scaling property of the moduli of smoothness from \cref{Rem:Moduli of smoothness} gives
	\begin{align}\label{Lemma_Whitney_standard_simplexproof_Number_needed_later_1}
		\|f-P_n\|_{L_q\left(J\times R\right)}&\lesssim_{\, d,q,s_1, s_2,\kappa_S}\omega_{r_1,t}\left(f, J\times R, \frac{2^{-\frac{ns_2}{s_1d}-1}}{4r_1}\right)_q + \omega_{r_2,\bm{x}}\left(f, J\times R, \delta_n\right)_q
		\\&\lesssim_{\, d,q,s_1,s_2,\kappa_S} \mathrm{w}_{r_1,t}\left(f, J\times R, \frac{2^{-\frac{ns_2}{s_1d}-1}}{4r_1}\right)_q + \mathrm{w}_{r_2,\bm{x}}\left(f, J\times R, \delta_n\right)_q,\notag
	\end{align}
	where we have used additionally that $|J|=2^{-a(n)}\in\left\{ 2^{-\frac{ns_2}{s_1d}-1}, 2^{-\frac{ns_2}{s_1d}}\right\}$.
	For $q<\infty$ and $n\in\mathbb{N}_0$, this means 
	\begin{align} \label{Lemma_Whitney_standard_simplexproof_Number_needed_later_2}
		\|f-P_n\|_{L_q([0,1]\times S)}^q &=\sum\limits_{J\times R\in \calP_n} \|f-P_n\|_{L_q\left(J\times R\right)}^q 
		\\ & \lesssim_{\, d,q,s_1, s_2,\kappa_S} \sum\limits_{J\times R\in \calP_n} \mathrm{w}_{r_1,t}\left(f, J\times R, \frac{2^{-\frac{ns_2}{s_1d}-1}}{4r_1}\right)_q^q + \mathrm{w}_{r_2,\bm{x}}\left(f, J\times R, \delta_n\right)_q^q\notag
		\\ & \le  \mathrm{w}_{r_1,t}\left(f, [0,1]\times S, \frac{2^{-\frac{ns_2}{s_1d}-1}}{4r_1}\right)_q^q + \mathrm{w}_{r_2,\bm{x}}\left(f, [0,1]\times S,  \delta_n\right)_q^q \notag 
		\\ & \le  \omega_{r_1,t}\left(f, [0,1]\times S, \frac{2^{-\frac{ns_2}{s_1d}-1}}{4r_1}\right)_q^q + \omega_{r_2,\bm{x}}\left(f, [0,1]\times S,  \delta_n\right)_q^q \notag 
		\\ &\lesssim_{\,d,q,s_1,s_2,\kappa_{S}}\omega_{r_1,t}\left(f, [0,1]\times S, 2^{-  \frac{ns_2}{s_1d} }\right)_q^q + \omega_{r_2,\bm{x}}\left(f, [0,1]\times S,  2^{-\frac{n}{d}}\right)_q^q,\notag
	\end{align}	
	where we applied the subadditivity of the averaged moduli of smoothness in the third step and the monotonicity as well as the scaling property of the moduli of smoothness from \cref{Rem:Moduli of smoothness} in the fifth one.
	For $q=\infty$, we similarly get a corresponding result. Together, this yields 
	\begin{equation}\label{Lemma_Whitney_standard_simplexproof_1}
		\|f-P_n\|_{L_q([0,1]\times S)}\lesssim_{\, d,q,s_1,s_2,\kappa_S} \, \omega_{r_1,t}\left(f, [0,1]\times S, 2^{-\frac{ns_2}{s_1d}}\right)_q + \omega_{r_2,\bm{x}}\left(f, [0,1]\times S, 2^{-\frac{n}{d}}\right)_q,
	\end{equation}
	for any $q\in(0,\infty]$ and $n\in\mathbb{N}_0$. It holds \mbox{$\omega_{r_1,t}\left(f, [0,1]\times S,2^{-\frac{ns_2}{s_1d}}\right)_q, \omega_{r_2,\bm{x}}\left(f, [0,1]\times S,2^{-\frac{n}{d}}\right)_q\rightarrow 0$} for \mbox{$n\rightarrow \infty$}, since $f\in B^{s_1,s_2}_{q,q}([0,1]\times S)$. Together with \eqref{Lemma_Whitney_standard_simplexproof_1}, this implies $P_n\rightarrow f$ in $L_q([0,1]\times S)$ for $n\rightarrow \infty$. Now we estimate 
	\begin{align*}
		 \|P_n - P_{n+1}\|_{L_p([0,1]\times S)}^p & = \sum\limits_{J\times R \in \calP_{n+1}}  \|P_n - P_{n+1}\|_{L_p\left(J\times R\right)}^p
		 \\&\lesssim_{\, d,p, q, s_1, s_2} \sum\limits_{J\times R \in \calP_{n+1}}  \left|J\times R\right|^{\left(1-\frac{p}{q}\right)} \|P_n - P_{n+1}\|_{L_q\left(J\times R\right)}^p
		 \\&\lesssim_{\,p,q} \left(2^{-n\frac{s_2}{s_1d}}\, 2^{-n}\right)^{\left(1-\frac{p}{q}\right)} \sum\limits_{J\times R\in \calP_{n+1}}  \|P_n - P_{n+1}\|_{L_q\left(J\times R\right)}^p
		 \\&\le (2^{-n})^{\left(\frac{s_2}{s_1d}+1\right)\left(1-\frac{p}{q}\right)}\left(\sum\limits_{J\times R\in \calP_{n+1}}\|P_n - P_{n+1}\|_{L_q\left(J\times R\right)}^q\right)^\frac{p}{q}
		 \\&
		 \\& = (2^{-n})^{\left(\frac{s_2}{s_1d}+1\right)\left(1-\frac{p}{q}\right)} \|P_n - P_{n+1}\|_{L_q([0,1]\times S)}^p
	\end{align*}
	for $p<\infty$, by \cref{lem:Polynomial scaling} in the second step and the embedding $\ell^{\frac{q}{p}}(\mathbb{N}_0)\hookrightarrow \ell^{1}(\mathbb{N}_0)$\footnote{By $\ell^t(\N_0)$, $t\in (0,\infty]$, we denote the classical sequence spaces of $t$-summable infinite sequences with the corresponding \mbox{(quasi-)}norm $\|\cdot\|_{\ell^t(\N_0)}$. }, due to $q\le p$, in the fourth one. For $p=\infty$, we analogously derive
	\begin{equation*}
		\|P_n - P_{n+1}\|_{L_\infty([0,1]\times S)}\lesssim_{\, d,p,q,s_1,s_2} (2^{-n})^{\left(\frac{s_2}{s_1d}+1\right)\left(-\frac{1}{q}\right)}\|P_n - P_{n+1}\|_{L_q([0,1]\times S)}.
	\end{equation*}
	So, for arbitrary $p,q\in(0,\infty]$ with $q\le p$, it holds
	\begin{equation*}
		\|P_n - P_{n+1}\|_{L_p([0,1]\times S)}\lesssim_{\, d,p,q,s_1,s_2} (2^{-n})^{\left(\frac{s_2}{s_1d}+1\right)\left(\frac{1}{p}-\frac{1}{q}\right)}\|P_n - P_{n+1}\|_{L_q([0,1]\times S)}.
	\end{equation*}
	We use this to obtain
	\begin{align}\label{Lemma_Whitney_standard_simplexproof_2} 
		\sum\limits_{n=0}^\infty& \|P_n - P_{n+1}\|_{L_p([0,1]\times S)}^{\mu(p)} \notag
		\\&\lesssim_{\, d,p,q,s_1,s_2} \sum\limits_{n=0}^\infty (2^{-n})^{\left(\frac{s_2}{s_1d}+1\right)\left(\frac{1}{p}-\frac{1}{q}\right)\mu(p)}\|P_n - P_{n+1}\|_{L_q([0,1]\times S)}^{\mu(p)} \notag
		\\&\lesssim \sum\limits_{n=0}^\infty (2^{-n})^{\left(\frac{s_2}{s_1d}+1\right)\left(\frac{1}{p}-\frac{1}{q}\right)\mu(p)}\|f - P_n\|_{L_q([0,1]\times S)}^{\mu(p)}
		\nonumber
		\\&\lesssim_{\, d,p,q,s_1,s_2,\kappa_S} \sum\limits_{n=0}^\infty (2^{-n})^{\left(\frac{s_2}{s_1d}+1\right)\left(\frac{1}{p}-\frac{1}{q}\right)\mu(p)}\left(\omega_{r_1,t}\left(f, [0,1]\times S, 2^{-\frac{ns_2}{s_1 d}}\right)_q^{\mu(p)} + \omega_{r_2,\bm{x}}\left(f, [0,1]\times S, 2^{-\frac{n}{d}}\right)_q^{\mu(p)}\right)
	\end{align}
	by the $\mu(p)$-triangle inequality and \eqref{Lemma_Whitney_standard_simplexproof_1}. For $q\le \mu(p)$, it holds
	\begin{align}\label{Lemma_Whitney_standard_simplexproof_3} 
		&\sum\limits_{n=0}^\infty (2^{-n})^{\left(\frac{s_2}{s_1 d}+1\right)\left(\frac{1}{p}-\frac{1}{q}\right)\mu(p)}\left(\omega_{r_1,t}\left(f, [0,1]\times S, 2^{-\frac{ns_2}{s_1 d}}\right)_q^{\mu(p)} + \omega_{r_2,\bm{x}}\left(f, [0,1]\times S, 2^{-\frac{n}{d}}\right)_q^{\mu(p)}\right)\notag
		\\ & = \sum\limits_{n=0}^\infty (2^{-n})^{\left(\frac{s_2}{s_1 d}+1\right)\left(\frac{1}{\frac{1}{s_1}+ \frac{d}{s_2}} + \frac{1}{p}-\frac{1}{q}\right)\mu(p)}2^{n\frac{s_2}{d}\mu(p)}\left(\omega_{r_1,t}\left(f, [0,1]\times S, 2^{-\frac{ns_2}{s_1 d}}\right)_q^{\mu(p)} + \omega_{r_2,\bm{x}}\left(f, [0,1]\times S, 2^{-\frac{n}{d}}\right)_q^{\mu(p)}\right)\notag
		\\ & \le  \sum\limits_{n=0}^\infty 2^{n\frac{s_2}{d}\mu(p)}\left(\omega_{r_1,t}\left(f, [0,1]\times S, 2^{-\frac{ns_2}{s_1d}}\right)_q^{\mu(p)} + \omega_{r_2,\bm{x}}\left(f, [0,1]\times S, 2^{-\frac{n}{d}}\right)_q^{\mu(p)}\right)\notag
		\\ &\lesssim_{\, p,q} \left(\sum\limits_{n=0}^\infty 2^{n\frac{s_2}{d}q}\left(\omega_{r_1,t}\left(f, [0,1]\times S, 2^{-\frac{ns_2}{s_1d}}\right)_q^{q} + \omega_{r_2,\bm{x}}\left(f, [0,1]\times S, 2^{-\frac{n}{d}}\right)_q^{q} \right)\right)^{\frac{\mu(p)}{q}}\notag
		\\&=\left(\sum\limits_{n=0}^\infty \left(2^{\frac{s_2}{s_1 d}}\right)^{ns_1q}\omega_{r_1,t}\left(f,[0,1]\times S, \left(2^{\frac{s_2}{s_1 d}}\right)^{-n} \right)_q^q+ \sum\limits_{n=0}^{\infty}\left(2^{\frac{1}{d}}\right)^{ns_2q}\omega_{r_2, \bm{x}}\left(f, [0,1]\times S, \left(2^{\frac{1}{d}}\right)^{-n}\right)_q^q\right)^\frac{\mu(p)}{q}\notag
	 	\\&\lesssim_{\,d,p,q,s_1,s_2}|f|_{B^{s_1,s_2}_{q,q}([0,1]\times S)}^{\mu(p)}, 
	\end{align}
	due to $2^{-n\frac{s_2}{d}} = 2^{-n\left(\frac{s_2}{s_1 d}+1\right)\frac{1}{\frac{1}{s_1}+ \frac{d}{s_2}}}$, $\frac{1}{\frac{1}{s_1}+\frac{d}{s_2}}+\frac{1}{p}-\frac{1}{q}> 0$ in the second step, the embedding \mbox{$\ell^{\frac{q}{\mu(p)}}(\mathbb{N}_0)\hookrightarrow \ell^1(\mathbb{N}_0)$} in the third, and \cref{Rem - equivalence of Besov(quasi-)seminorms} in the last one. For $q>\mu(p)$, let $t\in[1,\infty)$ be the conjugate Hölder exponent to $\frac{q}{\mu(p)}$. Thus, applying Hölder's inequality implies
		\begin{align}\label{Lemma_Whitney_standard_simplexproof_4}
	&\sum\limits_{n=0}^\infty  \left(2^{-n}\right)^{\left(\frac{s_2}{s_1d}+1\right)\left(\frac{1}{\frac{1}{s_1}+\frac{d}{s_2}}+\frac{1}{p}-\frac{1}{q}\right)\mu(p)} 2^{n\frac{s_2}{d}\mu(p)}\left(\omega_{r_1,t}\left(f, [0,1]\times S, 2^{-\frac{ns_2}{s_1 d}}\right)_q^{\mu(p)} + \omega_{r_2,\bm{x}}\left(f, [0,1]\times S, 2^{-\frac{n}{d}}\right)_q^{\mu(p)}\right)\notag
		\\&\le  \left[\sum\limits_{n=0}^\infty \left(2^{-t\left(\frac{s_2}{s_1d}+1\right)\left(\frac{1}{\frac{1}{s_1}+\frac{d}{s_2}}+\frac{1}{p}-\frac{1}{q}\right)\mu(p)}\right)^n ~ \right]^\frac{1}{t}\notag
		\\ &~~~~~~\times\left|\left|\left(2^{n\frac{s_2}{d}\mu(p)}\left(\omega_{r_1,t}\left(f, [0,1]\times S, 2^{-\frac{ns_2}{s_1 d}}\right)_q^{\mu(p)} + \omega_{r_2,\bm{x}}\left(f, [0,1]\times S, 2^{-\frac{n}{d}}\right)_q^{\mu(p)}\right)\right)_{n\in\mathbb{N}_0}\right|\right|_{\ell^\frac{q}{\mu(p)}(\mathbb{N}_0)}\notag
		\\ &\lesssim_{\,d,p,q,s_1,s_2} \left|\left|\left(2^{n\frac{s_2}{d}\mu(p)}\left(\omega_{r_1,t}\left(f, [0,1]\times S, 2^{-\frac{ns_2}{s_1d}}\right)_q^{\mu(p)} + \omega_{r_2,\bm{x}}\left(f, [0,1]\times S, 2^{-\frac{n}{d}}\right)_q^{\mu(p)}\right)\right)_{n\in\mathbb{N}_0}\right|\right|_{\ell^\frac{q}{\mu(p)}(\mathbb{N}_0)}\notag
        \\ & \lesssim_{\, d,q}\left(\sum\limits_{n=0}^\infty \left(2^{\frac{s_2}{s_1 d}}\right)^{ns_1q}\omega_{r_1,t}\left(f,[0,1]\times S, \left(2^{\frac{s_2}{s_1 d}}\right)^{-n} \right)_q^q+ \sum\limits_{n=0}^{\infty}\left(2^{\frac{1}{d}}\right)^{ns_2q}\omega_{r_2, \bm{x}}\left(f, [0,1]\times S, \left(2^{\frac{1}{d}}\right)^{-n}\right)_q^q\right)^\frac{\mu(p)}{q}\notag
		\\ &\lesssim_{\,d,p,q,s_1,s_2} |f|_{B^{s_1,s_2}_{q,q}([0,1]\times S)}^{\mu(p)}. 
	\end{align}
	Here, the geometric sum converges due to $-t\left(\frac{s_2}{s_1}+d\right)\left(\frac{1}{\frac{1}{s_1}+ \frac{d}{s_2}}+\frac{1}{p}-\frac{1}{q}\right)\mu(p)<0$. Now, using the embedding $\ell^{\mu(p)}(\mathbb{N}_0)\hookrightarrow \ell^{1}(\mathbb{N}_0)$ as well as inserting \eqref{Lemma_Whitney_standard_simplexproof_3} and \eqref{Lemma_Whitney_standard_simplexproof_4}, respectively, into \eqref{Lemma_Whitney_standard_simplexproof_2} yields 
	\begin{equation}\label{Lemma_Whitney_standard_simplexproof_5}
		\sum\limits_{n=0}^\infty \|P_n - P_{n+1}\|_{L_p([0,1]\times S)} \le \left(\sum\limits_{n=0}^\infty \|P_n - P_{n+1}\|_{L_p([0,1]\times S)}^{\mu(p)} \right)^\frac{1}{\mu(p)} \lesssim_{\,d,p,q,s_1,s_2, \kappa_S} |f|_{B^{s_1,s_2}_{q,q}([0,1]\times S)}<\infty.
	\end{equation} 
	Therefore, $(P_n)_{n\in\mathbb{N}_0}$ is a Cauchy sequence in $L_p([0,1]\times S)$. This space is complete such that $P_n\rightarrow g$ as $n\rightarrow \infty$ in $L_p([0,1]\times S)$ for some $g\in L_p([0,1]\times S)$. Now $p\ge q$ and $|[0,1]\times S|=1<\infty$ leads to the fact that $P_n\rightarrow g$ for $n\rightarrow \infty$ in $L_q([0,1]\times S)$ also holds true, since then $L_p([0,1]\times S)\hookrightarrow L_q([0,1]\times S) $. The $L_q([0,1]\times S)$-limit is unique and, thus, it holds $f=g$ almost everywher and $P_n\rightarrow f$ in $L_p([0,1]\times S)$ for $n\rightarrow \infty$. Note that this especially shows $f\in L_p([0,1]\times S)$. The subadditivity of $\|\cdot\|_{L_p([0,1]\times S)}^{\mu(p)}$ then implies
	\begin{equation*}
		\|f-P_0\|^{\mu(p)}_{L_p([0,1]\times S)}= \left|\left|\lim\limits_{n\rightarrow \infty}P_n-P_0\right|\right|^{\mu(p)}_{L_p([0,1]\times S)}\le \sum\limits_{n=0}^\infty \|P_n - P_{n+1}\|^{\mu(p)}_{L_p([0,1]\times S)}.
	\end{equation*}
	Combining this with \eqref{Lemma_Whitney_standard_simplexproof_5} yields \eqref{Lemma_Whitney_standard_simplexEquation}. 
\end{proof}

The following lemma generalizes the above one to arbitrary simplices.

\begin{lem}\label{Lemma_Whitney_any_simplex}
	Let $J$ be a bounded interval, $S\subset \R^d$, $d\in\N $,  a $d$-dimensional simplex, $p,q\in(0,\infty]$, $s_1, s_2 \in(0,\infty)$, \mbox{$\frac{1}{\frac{1}{s_1}+\frac{d}{s_2}}-\frac{1}{q}+\frac{1}{p}>0$}, $r_1, r_2 \in\mathbb{N}$, and $r_i>s_i$, $i=1,2$. Then, for every $f\in B^{s_1, s_2}_{q,q}(J\times S)$ there is some $P=P(f)\in \Pi^{r_1, r_2}_{t,\bm{x}}(J\times S)$ such that
	\begin{equation}\label{Lemma_Whitney_any_simplex__equation}
		\|f-P\|_{L_p(J\times S)}\lesssim_{\, d,p,q,s_1,s_2,\kappa_S} |J\times S|^{\frac{1}{p}-\frac{1}{q}} \max\left(|J|^{s_1}, |S|^{\frac{s_2}{d}}\right)  |f|_{B^{s_1, s_2}_{q,q}(J\times S)}.
	\end{equation}
\end{lem}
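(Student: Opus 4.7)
The plan is to reduce this lemma to \cref{Lemma_Whitney_standard_simplex} by affinely rescaling $J\times S$ to a reference prism $[0,1]\times\tilde S$ with $|\tilde S|=1$. First, I would set $\tilde S := |S|^{-1/d}S$; this is a simplex of volume~$1$ similar to $S$, so in particular $\kappa_{\tilde S}=\kappa_S$. Writing $J=[t_0,t_0+|J|]$, the mapping $\varphi(\tau,\bm{z}):=(|J|\tau+t_0,\,|S|^{1/d}\bm{z})$ is then a scaling in the sense of \cref{lem:Behaviour under affine linear transformations} carrying $[0,1]\times\tilde S$ onto $J\times S$, with $|\det D\varphi|=|J\times S|$. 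Consequently, the constant produced by \cref{Lemma_Whitney_standard_simplex} applied on $[0,1]\times\tilde S$ is still controlled by $\kappa_S$.

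I would then apply \cref{Lemma_Whitney_standard_simplex} to $f\circ\varphi\in B^{s_1,s_2}_{q,q}([0,1]\times\tilde S)$, obtaining some $\tilde P\in \Pi^{r_1,r_2}_{t,\bm{x}}([0,1]\times\tilde S)$ with $\|f\circ\varphi-\tilde P\|_{L_p([0,1]\times\tilde S)}\lesssim |f\circ\varphi|_{B^{s_1,s_2}_{q,q}([0,1]\times\tilde S)}$, and take $P:=\tilde P\circ\varphi^{-1}$. A quick check shows $P\in\Pi^{r_1,r_2}_{t,\bm{x}}(J\times S)$, because $\varphi^{-1}$ acts coordinate-wise on time and space and therefore cannot raise the degree in $\tau$ above $r_1-1$ nor the total spacial degree above $r_2-1$. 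Applying the Jacobi transformation formula to the $L_p$-norm then yields $\|f-P\|_{L_p(J\times S)}=|J\times S|^{1/p}\|f\circ\varphi-\tilde P\|_{L_p([0,1]\times\tilde S)}$.

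The remaining step is to compare the Besov seminorms on the two prisms. I would read \eqref{moduli-1} from \cref{lem:Behaviour under affine linear transformations} in the reverse direction, producing
\begin{align*}
    \omega_{r_1,t}(f\circ\varphi,[0,1]\times\tilde S,\tilde\delta)_q &= |J\times S|^{-1/q}\,\omega_{r_1,t}(f,J\times S,|J|\tilde\delta)_q,\\
    \omega_{r_2,\bm{x}}(f\circ\varphi,[0,1]\times\tilde S,\tilde\delta)_q &= |J\times S|^{-1/q}\,\omega_{r_2,\bm{x}}(f,J\times S,|S|^{1/d}\tilde\delta)_q,
\end{align*}
and then performing the substitutions $\delta=|J|\tilde\delta$ and $\delta=|S|^{1/d}\tilde\delta$ in the temporal and spacial integrals of $|f\circ\varphi|^q_{B^{s_1,s_2}_{q,q}([0,1]\times\tilde S)}$. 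The substitutions pull out factors $|J|^{s_1 q}$ and $|S|^{s_2 q/d}$ respectively, and bounding both by $\max(|J|^{s_1 q},|S|^{s_2 q/d})$ gives $|f\circ\varphi|_{B^{s_1,s_2}_{q,q}([0,1]\times\tilde S)}\le |J\times S|^{-1/q}\max(|J|^{s_1},|S|^{s_2/d})|f|_{B^{s_1,s_2}_{q,q}(J\times S)}$; the case $q=\infty$ is handled analogously with suprema. Combining the three relations yields \eqref{Lemma_Whitney_any_simplex__equation}. I do not anticipate any genuine obstacle here: the whole proof is essentially bookkeeping of the scaling factors in the moduli of smoothness and in $|\det D\varphi|$, and it is made possible precisely because the spacial part of $\varphi$ is a \emph{scalar} dilation, which is exactly the form covered by \cref{lem:Behaviour under affine linear transformations}.
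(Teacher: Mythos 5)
Your proof is correct and follows essentially the same route as the paper's: both choose $\tilde S$ as the scalar dilation of $S$ to unit volume (so $\kappa_{\tilde S}=\kappa_S$), pull back via a scaling $\varphi$, apply \cref{Lemma_Whitney_standard_simplex} on $[0,1]\times\tilde S$, push the resulting polynomial forward, and account for the Jacobian factor $|J\times S|^{1/p}$ plus the Besov-seminorm scaling factor $|J\times S|^{-1/q}\max(|J|^{s_1},|S|^{s_2/d})$ obtained from \cref{lem:Behaviour under affine linear transformations} and the two linear substitutions. The only difference is stylistic: you make the choice $\tilde S=|S|^{-1/d}S$ and the moduli-scaling identities explicit, whereas the paper compresses these into a reference to \cref{lem:Behaviour under affine linear transformations} applied to $\varphi^{-1}$.
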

\begin{proof}
	Let $\varphi:[0,1]\times S_{ref}\rightarrow J\times S$ be a scaling with $S_{ref}$ a $d$-dimensional simplex with $|S_{ref}|=1$. Thus, the identity $|\det (\nabla \varphi^{-1})| =|J\times S|^{-1}|[0,1]\times S_{ref}|=|J\times S|^{-1}$ holds true. Now for a given \mbox{$f \in B^s_{q,q}(J\times S)$}, we define $\tilde{f}:= f\circ \varphi$. For $q<\infty$ we derive 
	\begin{align*}
		\big|\tilde{f}\big|&\phantom{}_{B^{s_1, s_2}_{q,q}\left([0,1]\times S_{ref}\right)}^q 
		\\&= \int\limits_0^\infty \tilde{\delta}^{-s_1q}\omega_{r_1,t}\left(\tilde{f}, [0,1]\times S_{ref}, \tilde{\delta}\right)_q^q \frac{d\tilde{\delta}}{\tilde{\delta}} + \int\limits_0^\infty \tilde{\delta}^{-s_2q}\omega_{r_2,\bm{x}}\left(\tilde{f}, [0,1]\times S_{ref}, \tilde{\delta}\right)_q^q \frac{d\tilde{\delta}}{\tilde{\delta}}
		\\&= |J\times S|^{-1}\left(\int\limits_0^\infty \tilde{\delta}^{-s_1q}\omega_{r_1, t}\left(f, J\times S, \tilde{\delta}\,|J|\right)_q^q \frac{d\tilde{\delta}}{\tilde{\delta}} + \int\limits_0^\infty \tilde{\delta}^{-s_2q}\omega_{r_2, \bm{x}}\left(f, J\times S, \tilde{\delta}\,|S|^{\frac{1}{d}}\right)_q^q \frac{d\tilde{\delta}}{\tilde{\delta}}\right)
        \\&= |J\times S|^{-1}\left(|J|^{s_1q}\int\limits_0^\infty \delta^{-s_1q}\omega_{r_1, t}\left(f, J\times S, \delta\right)_q^q \frac{d\delta}{\delta} + |S|^\frac{s_2q}{d}\int\limits_0^\infty \delta^{-s_2q}\omega_{r_2, \bm{x}}\left(f, J\times S, \delta\right)_q^q \frac{d\delta}{\delta}\right)
        \\&\le |J\times S|^{-1} \max\left(|J|^{s_1}, |S|^{\frac{s_2}{d}}\right)^q |f|_{B^{s_1, s_2}_{q,q}(J\times S)}^q,
	\end{align*}
    where we have used \cref{lem:Behaviour under affine linear transformations} with respect to $\varphi^{-1}$ in the second step and two linear substitutions in the third one. The case $q=\infty$ is very similar, thus we obtain
	\begin{align}\label{Lemma_Whitney_any_simplex__proof_2}
		\big|\tilde{f}\big|_{B^{s_1, s_2}_{q,q}\left([0,1]\times S_{ref}\right)}\le  |J\times S|^{-\frac{1}{q}} \max\left(|I|^{s_1}, |S|^{\frac{s_2}{d}}\right)  |f|_{B^{s_1,s_2}_{q,q}(J\times S)},
	\end{align}
	for arbitrary $q\in(0,\infty]$. Since $|f|_{B^{s_1,s_2}_{q,q}(J\times S)}<\infty$, this implies $\big|\tilde{f}\big|_{B^{s_1,s_2}_{q,q}\left([0,1]\times S_{ref}\right)}<\infty$. Additionally, Jacobi's transformation formula further shows $\big\|\tilde{f}\big\|_{L_{q}\left([0,1]\times S_{ref}\right)} = |J\times S|^{-\frac{1}{q}}\,\|f\|_{L_q(J\times S)}<\infty$.
	Thus, also $\tilde{f}\in L_q\left([0,1]\times S_{ref}\right)$ holds true. Together, we conclude $\tilde{f}\in B^{s_1, s_2}_{q,q}\left([0,1]\times S_{ref}\right)$. Now \cref{Lemma_Whitney_standard_simplex} gives $\tilde{P}\in \Pi_{t,\bm{x}}^{r_1,r_2}([0,1]\times S_{ref})$ such that 
	\begin{equation}\label{Lemma_Whitney_any_simplex__proof_2.5}
		\left|\left|\tilde{f}-\tilde{P}\right|\right|_{L_p\left([0,1]\times S_{ref}\right)}\lesssim_{\, d,p,q,s_1,s_2,\kappa_S} |\tilde{f}|_{B_{q,q}^{s_1, s_2}\left([0,1]\times S_{ref}\right)}.
	\end{equation}
	Beware of the fact here that $\kappa_{S_{ref}}=\kappa_S$. Defining $P:=\tilde{P}\circ \varphi^{-1}\in \Pi^{r_1,r_2}_{t,\bm{x}}(J\times S)$ and another use of Jacobi's formula yields
	\begin{align}\label{Lemma_Whitney_any_simplex__proof_3}
		\left|\left|\tilde{f}-\tilde{P}\right|\right|_{L_{p}\left([0,1]\times S\right)} = |J\times S|^{-\frac{1}{p}}\|f-P\|_{L_p(J\times S)}.
	\end{align}
	Now \eqref{Lemma_Whitney_any_simplex__proof_2}, \eqref{Lemma_Whitney_any_simplex__proof_2.5}, and \eqref{Lemma_Whitney_any_simplex__proof_3} imply the asserted equation \eqref{Lemma_Whitney_any_simplex__equation}.
\end{proof}

In the remainder of this section, in addition to the assumptions made in the beginning of \cref{Sect:Approx of Lebesgue functions}, we also assume that $D$ is polyhedral and covered with a space-time partition $\calP$, i.e., a non-overlapping covering of $I\times D$ by prisms $J\times S$ with $J$ a bounded interval and $S$ a $d$-dimensional simplex. For such domains, we can give the following corollary to \cref{Lemma equivalence of (quasi-)seminorms for polyhedral domains} which corresponds to similar results for the isotropic case in \cite[Lem.~4.4]{BDDP02} and \cite[Lem.~4.10]{GM14}.
\begin{cor}\label{Corollary to Lemma equivalence of (quasi-)seminorms for polyhedral domains}
	Let $q\in (0,\infty]$, $s_1, s_2 \in (0, \infty)$, and $f\in B^{s_1, s_2}_{q,q}(I\times D)$. Then it holds
	\begin{align*}
		\sum\limits_{J\times S\in \calP} |f|^q_{B^{s_1,s_2}_{q,q}(J\times S)}  \lesssim_{\, d,q,s_1,s_2, \kappa_\calP} |f|^q_{B^{s_1,s_2}_{q,q}\left(I\times D\right)},
	\end{align*}
	for $q<\infty$, and
	\begin{align*}
		\sup\limits_{J\times S\in \calP} |f|_{B^{s_1,s_2}_{\infty,\infty}(J\times S)}  \lesssim_{\, d,s_1,s_2, \kappa_\calP} |f|_{B^{s_1,s_2}_{\infty,\infty}\left(I\times D\right)},
	\end{align*}
	if $q=\infty$. 
\end{cor}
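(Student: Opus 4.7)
The plan is to pass through the averaged version of the Besov seminorm introduced in Lemma \ref{Lemma equivalence of (quasi-)seminorms for polyhedral domains}. The key feature of the averaged moduli is that they are defined through integrals and therefore decompose nicely under disjoint domain decomposition, whereas the supremum moduli do not. The trivial inequality $w_{r,dir}\le \omega_{r,dir}$ will then allow us to return to the supremum seminorm on the full domain $I\times D$ without paying any dependence on $\LipProp(D)$.

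More precisely, for the main case $q<\infty$, I would first note that for any simplex $S$ appearing in a prism $J\times S\in\calP$, the Lipschitz properties $\LipProp(S)$ are controlled by $\kappa_S\le \kappa_{\calP}$. Hence Lemma \ref{Lemma equivalence of (quasi-)seminorms for polyhedral domains} applied to each prism $J\times S$ yields
\begin{align*}
    |f|_{B^{s_1,s_2}_{q,q}(J\times S)}^q \lesssim_{\,d,q,s_1,s_2,\kappa_{\calP}} |f|^{\bullet,q}_{B^{s_1,s_2}_{q,q}(J\times S)}
\end{align*}
with a constant uniform in $J\times S\in\calP$. Summing over all prisms and swapping the sum with the integral over $\delta$ (and over $h$ or $\bm{h}$) by Tonelli reduces the problem to bounding, for each fixed $\delta>0$,
\begin{align*}
    \sum_{J\times S\in\calP} w_{r_1,t}(f,J\times S,\delta)_q^q \quad\text{and}\quad \sum_{J\times S\in\calP} w_{r_2,\bm{x}}(f,J\times S,\delta)_q^q.
\end{align*}

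The crux of the argument is the elementary observation that, for fixed $h\in\R$, the sets $J_{r_1,h}\times S$ with $J\times S\in\calP$ are pairwise disjoint and contained in $I_{r_1,h}\times D$, because $J_{r_1,h}\subset I_{r_1,h}$ and the prisms tile $I\times D$. Hence
\begin{align*}
    \sum_{J\times S\in\calP}\|\Delta_{h,t}^{r_1}f\|_{L_q(J_{r_1,h}\times S)}^q \le \|\Delta_{h,t}^{r_1}f\|_{L_q(I_{r_1,h}\times D)}^q,
\end{align*}
and, analogously, for fixed $\bm{h}\in\R^d$ one has $S_{r_2,\bm{h}}\subset D_{r_2,\bm{h}}$ since $S\subset D$, giving the corresponding estimate for the spacial differences. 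Averaging these inequalities over $h$ (respectively $\bm{h}$) gives $\sum_{J\times S}w_{r_1,t}(f,J\times S,\delta)_q^q \le w_{r_1,t}(f,I\times D,\delta)_q^q$ and similarly in space. Reintegrating over $\delta$ then yields
\begin{align*}
    \sum_{J\times S\in\calP} |f|^{\bullet,q}_{B^{s_1,s_2}_{q,q}(J\times S)} \le |f|^{\bullet,q}_{B^{s_1,s_2}_{q,q}(I\times D)} \le |f|^{q}_{B^{s_1,s_2}_{q,q}(I\times D)},
\end{align*}
where the final step uses only the trivial pointwise bound $w_{r,dir}\le \omega_{r,dir}$ (no Lipschitz hypothesis on $I\times D$ is required, so no dependence on $\LipProp(D)$ enters the constant).

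The case $q=\infty$ is even more direct: the averaged and supremum moduli coincide, and the containments $J_{r_1,h}\times S\subset I_{r_1,h}\times D$ and $J\times S_{r_2,\bm{h}}\subset I\times D_{r_2,\bm{h}}$ immediately give $\omega_{r_i,dir}(f,J\times S,\delta)_\infty\le \omega_{r_i,dir}(f,I\times D,\delta)_\infty$ for each prism, so taking suprema over $J\times S\in\calP$ yields the claim (in fact, with constant $1$). The only place where any care is needed is the uniformity of the equivalence constants in step one, which is why the dependence on $\kappa_{\calP}$ appears; this should not be a real obstacle since it follows directly from the definition of $\kappa_{\calP}$.
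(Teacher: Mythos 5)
Your proposal is correct and follows essentially the same route as the paper's proof: apply \cref{Lemma equivalence of (quasi-)seminorms for polyhedral domains} on each prism with constants controlled uniformly via $\LipProp(S)\sim_d \kappa_S\le\kappa_\calP$, exploit the superadditivity of the averaged seminorms coming from the disjoint inclusions $J_{r_1,h}\times S\subset I_{r_1,h}\times D$ and $J\times S_{r_2,\bm{h}}\subset I\times D_{r_2,\bm{h}}$, and close with the trivial bound $w_{r,dir}\le\omega_{r,dir}$. You have simply spelled out the Tonelli step and the disjointness argument that the paper compresses into the phrase "subadditivity of the averaged moduli."
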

\begin{proof} 
    The assertion follows from the inclusion $\bigcup\limits_{J\times S\in \calP} J_{r_1,h}\subset \left(\bigcup\limits_{J\times S\in \calP} J \right)_{r_1,h}=I_{r_1,h}$, $h\in \R$, and its counterpart \mbox{$\bigcup\limits_{J\times S\in \calP} S_{r_2,\bm{h}}\subset \left(\bigcup\limits_{J\times S\in \calP} S\right)_{r_2,\bm{h}}=D_{r_2,\bm{h}}$}, $\bm{h}\in \R^d$, \cref{Lemma equivalence of (quasi-)seminorms for polyhedral domains}, beware that $\LipProp(S)\sim_{\,d}\kappa_S\le \kappa_\calP$, and the subadditivity of the averaged moduli of smoothness. 
\end{proof}
Now we can finally proceed to the proofs of two pending main results, i.e., \cref{thm:Whitney} and \cref{thm:Besov_Embedding}. 

\begin{proof}[Proof of \cref{thm:Whitney}]
    Recall the definition of $a(\calP)$ in \eqref{def:aP} from \cref{sect:Main results}. Then, since for any $J\times S\in \calP$,
    \begin{align*}
    |J\times S|^\frac{1}{\frac{1}{s_1}+\frac{d}{s_2}}\sim_{a(\calP)}\left(|S|^{\frac{s_2}{s_1d}+1}\right)^\frac{1}{\frac{1}{s_1}+\frac{d}{s_2}}= \left(|S|^{\frac{s_2}{s_1d}+1}\right)^\frac{s_1s_2}{s_2+ds_1}=|S|^\frac{s_2}{d}\sim_{a(\calP),s_1}\max\left(|J|^{s_1}, |S|^{\frac{s_2}{d}}\right), 
    \end{align*}
    the local estimate \eqref{thm:Whitney - equation local} follows immediately from \cref{Lemma_Whitney_any_simplex}. For the global estimate \eqref{thm:Whitney - equation global}, we first assume $q\le p$ and $p<\infty$. Then, we can estimate
    \begin{align*}
        \|f-P\|_{L_p(I\times D)}&=\left(\sum\limits_{J\times S\in \calP} \|f-P_{J\times S}\|^p_{L_p(J\times S)}\right)^{\frac{1}{p}}
        \\&\lesssim_{\,d,p,q,s_1, s_2, \kappa_\calP, a(\calP)}\left(\sum\limits_{J\times S\in \calP} |J\times S|^{\left(\frac{1}{\frac{1}{s_1}+\frac{d}{s_2}}-\frac1q +\frac1p\right)p} |f|^p_{B^{s_1, s_2}_{q,q}(J\times S)}\right)^{\frac{1}{p}}
        \\
        &\le \max_{J\times S\in \calP}|J\times S|^{\frac{1}{\frac{1}{s_1}+\frac{d}{s_2}}-\frac{1}{q}+\frac{1}{p}}
        \left(\sum\limits_{J\times S\in \calP} |f|^q_{B^{s_1, s_2}_{q,q}(J\times S)}\right)^{\frac{1}{q}} 
        \\&\lesssim_{\,d,p,q,s_1,s_2, \kappa_\calP} \max_{J\times S\in \calP}|J\times S|^{\frac{1}{\frac{1}{s_1}+\frac{d}{s_2}}-\frac{1}{q}+\frac{1}{p}} |f|_{B^{s_1, s_2}_{q,q}(I\times D)}
        \\&\le |I\times D|^{\frac{1}{\frac{1}{s_1}+\frac{d}{s_2}}-\frac1q +\frac1p} |f|_{B^{s_1, s_2}_{q,q}(I\times D)}.
    \end{align*}
    We have applied the local estimate and $\kappa_S \le \kappa_\calP$, for every $J\times S\in \calP$, in the second step, the embedding $\ell^{q}(\N_0)\hookrightarrow \ell^{p}(\N_0)$ in the third, and \cref{Corollary to Lemma equivalence of (quasi-)seminorms for polyhedral domains} in the last one. For $p=\infty$, this works analogously with suprema instead of sums. For $p<q$, we apply Hölder's inequality to $\|f-P\|_{L_p(I\times D)}$ with respect to $\frac{q}{p}>1$ and then use the above estimate with \enquote*{$p=q$} to prove the assertion.
\end{proof}

\begin{proof}[Proof of \cref{thm:Besov_Embedding}]
    We define $P$ as in \cref{thm:Whitney} and estimate 
    \begin{align}\label{Proof of thm:Besov_Embedding - Equation 1}
        \|f\|_{L_p(I\times D)}&\lesssim_{\,p} \|f-P\|_{L_p(I\times D)} + \|P\|_{L_p(I\times D)}
        \\&\lesssim_{\,d,p,q,s_1,s_2, \kappa_\calP, a(\calP), \max\limits_{J\times S\in \calP}|J\times S|} \ |f|_{B^{s_1, s_2}_{q,q}(I\times D)} + \|P\|_{L_p(I\times D)} ,\notag
    \end{align}
    where we have applied \eqref{thm:Whitney - equation global} from \cref{thm:Whitney}. We will again first consider the case $q\le p$. The choice of $P_{J\times S}$, $J\times S\in \calP$, from \cref{Lemma_Whitney_standard_simplex} and \cref{Lemma_Whitney_any_simplex}, respectively, guarantees that it fulfills   Jackson's estimate from \cref{thm:Jackson} on $J\times S$ with respect to $q$, due to the scaling invariance of   Jackson's estimate. The latter is (as it has been mentioned before) a consequence of the Jacobi transformation theorem and \cref{lem:Behaviour under affine linear transformations}. Therefore, for $p<\infty$, we can estimate
    \begin{align}\label{Proof of thm:Besov_Embedding - Equation 2}
        \|P\|_{L_p(I\times D)}&\lesssim_{\,} \left(\sum\limits_{J\times S\in \calP} \|P_{J\times S}\|_{L_p(J\times S)}^p\right)^{\frac{1}{p}}\lesssim_{\,d,p,q,s_1, s_2} \left(\sum\limits_{J\times S\in \calP} |J\times S|^{\frac{q}{p}-1} \|P_{J\times S}\|_{L_q(J\times S)}^q\right)^{\frac{1}{q}}
        \\ & \le \left(\min\limits_{J\times S\in \calP}|J\times S|\right)^{\frac{1}{p}-\frac 1q} \left(\sum\limits_{J\times S\in \calP} \|P_{J\times S}\|_{L_q(J\times S)}^q\right)^{\frac{1}{q}}\notag
        \\ & \lesssim_{\,p,q,\min\limits_{J\times S\in \calP}|J\times S| }\left(\sum\limits_{J\times S\in \calP} \|f-P_{J\times S}\|_{L_q(J\times S)}^q\right)^{\frac{1}{q}} + \left(\sum\limits_{J\times S\in \calP} \|f\|_{L_q(J\times S)}^q\right)^{\frac{1}{q}}\notag
        \\ & \lesssim_{\,d,q,s_1,s_2} \left(\sum\limits_{J\times S\in \calP} \omega_{r_1, t}(f, J\times S, |J|)_q^q + \omega_{r_2, \bm{x}}(f, J\times S, \diam(S))_q^q\right)^{\frac{1}{q}} + \|f\|_{L_q(I\times D)}\notag
        \\ & \lesssim_{\,q,s_1,s_2}\|f\|_{L_q(I\times D)}.\notag
    \end{align}
    where we have applied $\ell^{q}(\N_0)\hookrightarrow \ell^{p}(\N_0)$ and \cref{lem:Polynomial scaling} in the second step, $\frac{q}{p}-1<0$ in the third, the aforementioned property of Jackson's estimate (beware of the convexity of $S$) in the fifth, and the possibility to estimate the moduli of smoothness by the corresponding Lebesgue norm from \cref{Rem:Moduli of smoothness} in the last step. Finally, \eqref{Proof of thm:Besov_Embedding - Equation 1} and \eqref{Proof of thm:Besov_Embedding - Equation 2} together show the asserted embedding for $q\le p$. For $q>p$, we can simply use the chain of embeddings $B^{s_1,s_2}_{q,q}(I\times D)\hookrightarrow L_q(I\times D)\hookrightarrow L_p(I\times D)$.
\end{proof}

\section{Direct estimates for discontinuous, adaptive space-time finite elements}\label{Sect:Dir_est}

We will now use the  results proven in \cref{Section:Approximation_with_Besov_functions}, in particular, the Whitney-type inequality from \cref{thm:Whitney}, in order to demonstrate that our new approach to anisotropic Besov spaces is an appropriate one for the study of approximation with space-time finite elements. Therefore, we will prove a direct theorem for adaptive approximation with discontinuous, space-time finite elements starting with a initial tensor-product structure space-time partition. The achievement of a corresponding result for continuous finite elements as well as inverse estimates is currently under investigation, but   out of the scope of this article.

As mentioned above, initially, we will cover the domain $I\times D$  by a tensor product space-time partition of the form $\calP_0 :=:\calI_0\otimes \calT_0 :=\{J\times S\mid J\in \calI_0, S\in \calT_0 \}$. Here, by 
\begin{align*}
	\mathcal{I}_0:=\{t_0<\dots< t_N\}:=\{[t_{i-1},t_i)\mid i=1,\dots,N-1\}\cup \{[t_{N-1},t_N]\}, \quad N\in\N, 
\end{align*}
we denote an initial, non-overlapping (temporal) partition of $I$, which we assume to be closed without loss of generality. Further, let $\calT_0$ be an initial non-overlapping (spatial) covering of $\Omega$ consisting of closed $d$-dimensional simplices, i.e., a simplicial triangulation.

Further, we require $\calT_0$ to fulfill the conditions $a)$ and $b)$ from \cite[Sect.~4]{Ste08}, or to be otherwise \textbf{well-labeled}, e.g.  according to \cite{DGS23}.\footnote{Be aware of the fact that this is always fulfilled if $d=1$.} This allows us to apply the $d$-dimensional bisection routine from \cite{Mau95} and \cite{Tra97}, that was already applied in the proof of \cref{Lemma_Whitney_standard_simplex}, to elements of $\calT_0$ (arbitrarily often). In particular, for $t\in \{1,d\}$, we will denote the set of $t$-dimensional simplices created by the bisection of a well-labeled $t$-dimensional simplex $R$ by \mbox{$\textup{BISECT}(t, R)$}.

Furthermore, we will use the notation $\textup{BISECT}(t, R)^{\otimes 0 }:=\{R\}$, $\textup{BISECT}(t, R)^{\otimes 1 }:=\textup{BISECT}(t, R)$, and  
\begin{align*}
    \textup{BISECT}(t, R)^{\otimes k }:=\bigcup\limits_{R' \in \,\textup{BISECT}(t, R)^{\otimes (k-1) }} \textup{BISECT}(t, R') \quad \text{for}\quad k\in \N_{\ge 2}.
\end{align*} 
Additionally, we will denote the \textbf{levels} of simplices by $\ell(R)$, i.e., $\ell(R)=k$, for $k\in \N_0$, if and only if \mbox{$R\in \textup{BISECT}(t, R_0)^{\otimes k}$} for some $R_0\in \mathcal{R}_0$, with $\mathcal{R}_0\in \{\calI_0, \calT_0\}$ chosen according to the selection of $t$.

Now, for given anisotropy parameters $s_1, s_2\in (0,\infty)$, we consider the following algorithm for \textbf{atomic refinement} of individual elements $J\times S$ of a space-time partition $\calP$, which itself has been inductively generated by finitely many applications of the below algorithm, starting with $\calP_0$. 

\begin{algorithm}[h]
    \caption{$\textup{ATOMIC}\underline{~}\textup{SPLIT}(J\times S, d, s_1, s_2)$}
    \label{Algorithm_ANISOTROPIC_BISECT}
    \begin{algorithmic}
    	\State $n\gets \ell(S)+1$ 
    	\State $m\gets \left\lceil \frac{ns_2}{s_1d}\right\rceil - \left\lceil\frac{(n-1)s_2}{s_1d}\right\rceil$
    	\State $S_{children} \gets \textup{BISECT}(d, S)$
    	\State $J_{children}\gets \textup{BISECT}(1, J)\phantom{}^{\otimes m }$
    	\State \Return $J_{children}\times S_{children}$
    \end{algorithmic}
\end{algorithm}

\begin{center}
    \begin{figure}[h]
        \begin{center}
            \includegraphics[height = 6cm]{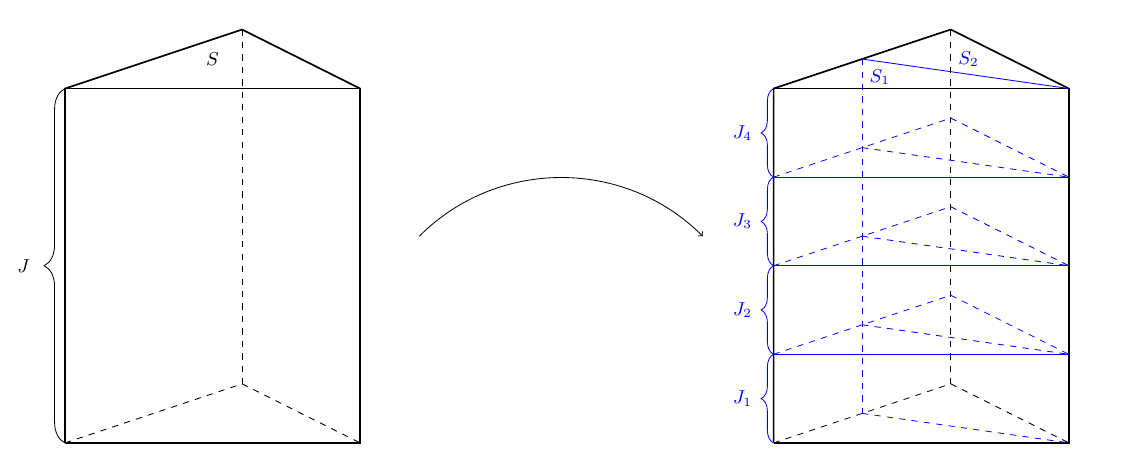}
            $\textup{ATOMIC}\underline{~}\textup{SPLIT}(J\times S, d, s_1, s_2)$ for $d=2$ and $s_2=4s_1=2s_1d$.
        \end{center}
    \end{figure}
\end{center}

We will extend our notation as follows. For $k\in \N_0$, we write \mbox{$\textup{ATOMIC}\underline{~}\textup{SPLIT}( \cdot, d, s_1, s_2)^{\otimes k}$}, correspondingly to our notation of \mbox{$\textup{BISECT}(t, \cdot)^{\otimes k }$}, $t\in \{1,d\} $. Furthermore,  the level $\ell(J\times S)=k$ holds true, if and only if \mbox{$J\times S\in \textup{ATOMIC}\underline{~}\textup{SPLIT}( J_0 \times S_0, d, s_1, s_2)^{\otimes k}$} for some $J_0 \times S_0\in \calP_0$. In particular, the latter is the case if and only if $\ell(J)=\left\lceil \frac{ks_2}{s_1d}\right\rceil$ and $\ell(S)=k$.

\begin{lem}\label{Lemma_Anisotropic_Bisection_maintains_anisotropy}
    The algorithm is well defined. Further, assume that $\calP$ has been derived from $\calP_0$ by $k_0\in\N_0$ iterative applications of $\textup{ATOMIC}\underline{~}\textup{SPLIT}(\cdot, d, s_1, s_2)$. Then there are constants $C_1>0$ and $C_2(d)>0$ independent of the number of iterations, such that
	\begin{align*}
		a(\calP) \le C_1 a(\calP_0), \quad \kappa_{\calP}\le C_2\kappa_{\calP_0} \quad \text{and} \quad \#\calP' -\#\calP \lesssim_{\,d,s_1, s_2} k_0.
	\end{align*}
\end{lem}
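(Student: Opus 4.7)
The plan is to treat the four assertions (well-definedness, $\kappa_\calP$, $a(\calP)$, cardinality) separately; all of them reduce to elementary bookkeeping on the levels $\ell(J)$ and $\ell(S)$ generated by iterated bisection, together with one invocation of a known shape-regularity result. For well-definedness I would first observe that the only thing requiring justification is that every $d$-dimensional simplex $S$ appearing during the iteration is a valid input for $\textup{BISECT}(d,\cdot)$. Since $\calT_0$ is well-labeled and the Maubach--Traxler routine preserves well-labeledness on all descendants (see \cite[Sect.~2~and~4]{Ste08}, already used in the proof of \cref{Lemma_Whitney_standard_simplex}), this is automatic; intervals require no labeling hypothesis. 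Hence the algorithm terminates and returns a legal refinement.

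For $\kappa_\calP \le C_2(d)\kappa_{\calP_0}$ I would invoke the structural result \cite[Thm.~2.1]{Ste08}, which yields $\kappa_S \le C_2(d)\,\kappa_{S_0}$ for every descendant $S$ of any initial $S_0 \in \calT_0$ with constant depending only on $d$. Taking the supremum over $J\times S \in \calP$ gives the claim. The anisotropy estimate is the numerical core. For $J\times S \in \calP$ with ancestor $J_0\times S_0 \in \calP_0$, write $k:=\ell(S)$ and $\gamma := s_2/(s_1d)$. By construction of $\textup{ATOMIC}\underline{~}\textup{SPLIT}$ one has
\begin{align*}
|S| = 2^{-k}|S_0|, \qquad |J| = 2^{-\lceil k\gamma\rceil}|J_0|,
\end{align*}
so that
\begin{align*}
\frac{|J|}{|S|^{\gamma}} \;=\; 2^{\,k\gamma - \lceil k\gamma\rceil}\,\frac{|J_0|}{|S_0|^{\gamma}}, \qquad 2^{\,k\gamma - \lceil k\gamma\rceil} \in [1/2,\,1].
\end{align*}
The reciprocal ratio correspondingly gains a factor in $[1,\,2]$, so both quantities defining $a(\cdot)$ are at most doubled on $\calP$, yielding $a(\calP)\le 2\,a(\calP_0)$.

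For the cardinality bound I would note that a single call to $\textup{ATOMIC}\underline{~}\textup{SPLIT}$ replaces one prism $J\times S$ by $2\cdot 2^m$ new prisms, where $m = \lceil n\gamma\rceil - \lceil (n-1)\gamma\rceil$; since the difference of ceilings of two reals differing by $\gamma$ is at most $\lceil\gamma\rceil$, each call contributes at most $2^{\lceil\gamma\rceil+1}-1$ new elements, a quantity depending only on $d,s_1,s_2$. Iterating $k_0$ times gives $\#\calP - \#\calP_0 \le k_0\bigl(2^{\lceil\gamma\rceil+1}-1\bigr) \lesssim_{\,d,s_1,s_2} k_0$, which is the asserted bound (I read the symbol $\calP'$ in the statement as a typo for $\calP_0$). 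The only mildly non-trivial input is Stevenson's shape-regularity theorem; everything else follows from the two explicit recursions $k\mapsto k+1$ in space and $\lceil k\gamma\rceil\mapsto\lceil(k+1)\gamma\rceil$ in time.
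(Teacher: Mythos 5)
Your proposal is correct and follows essentially the same route as the paper: well-definedness from preserved well-labeledness, the explicit computation $|J|/|S|^\gamma = 2^{k\gamma-\lceil k\gamma\rceil}\,|J_0|/|S_0|^\gamma$ for the anisotropy, Stevenson's Theorem 2.1 for shape regularity, and the per-call cardinality count $2\cdot 2^m$ with $m=\lceil n\gamma\rceil-\lceil(n-1)\gamma\rceil$. The only cosmetic differences are that you make the constant $C_1=2$ explicit (the paper is content with $\sim$) and bound $m$ by $\lceil\gamma\rceil$ rather than $\gamma+1$; your reading of $\calP'$ as $\calP_0$ is also the intended one.
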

\begin{proof}
    Since $\calT_0$ is well-labeled, this also holds true for all of the simplices that have been created (iteratively) by the $d$-dimensional bisection routine. Therefore, the algorithm is indeed well-defined. Now let $J\times S\in \calP$. Then, $J\times S\in \textup{ATOMIC}\underline{~}\textup{SPLIT}(J_0\times S_0, d, s_1, s_2)^{\otimes k}$ for some $k\in\{0,\dots,k_0\}$ and $J_0\times S_0\in \calP_0$. We obtain
    \begin{align*}
        \frac{|J|}{|S|^\frac{s_2}{s_1d}}= \frac{2^{-\left\lceil\frac{ks_2}{s_1d}\right\rceil}|J_0|}{\left(2^{-k} |S_0|\right)^\frac{s_2}{s_1d}}\sim \frac{2^{-\frac{ks_2}{s_1d}}|J_0|}{\left(2^{-k} |S_0|\right)^\frac{s_2}{s_1d}}=\frac{|J_0|}{ |S_0|^\frac{s_2}{s_1d}}\sim a(\calP_0),
    \end{align*}
    which shows the first estimate. The second assertion holds true due to the the properties of the $d$-dimensional bisection routine, see \cite[Thm.~2.1]{Ste08}. The last one is a consequence of the fact that an application of the atomic refinement method creates not more than $2^{\frac{s_2}{s_1d}+1}$ simplices, which one can see as follows. Let \mbox{$l:=\ell(J\times S)\in \N_0$}, then 
    \begin{align*}
	   \# \textup{ATOMIC}\underline{~}\textup{SPLIT}(J\times S, d, s_1, s_2)= 2^{\left\lceil \frac{ls_2}{s_1d} \right\rceil-\left\lceil \frac{(l-1)s_2}{s_1d}\right\rceil}\cdot 2 \le 2^{\frac{ls_2}{s_1d} + 1 - \frac{(l-1)s_2}{s_1d} } = 2^{\frac{s_2}{s_1d}+1}.
\end{align*}
\end{proof}

In order to prove \cref{Theorem_dir_est_discont}, we have been guided by the procedure from \cite[Sect.~4]{AMS23}. We adopt the Greedy algorithm specified there subject to some minor changes as follows. We assume $\delta\in(0,\infty)$.

\begin{algorithm}[h]
\caption{$\textup{GREEDY}(\delta)$} 
\begin{algorithmic}
	\State $k\gets 0$
	\While{True}
		\State $\calM_k:=\left\{J\times S\in \calP_k \ \bigg| \ \inf\limits_{P\in \Pi^{r_1, r_2}_{t,\bm{x}}(J\times S)} \|f-P\|_{L_p(J\times S)}>\delta\right\}$
		\If{$\calM_k=\emptyset$}
			\State \textup{break}
		\EndIf		
		\State $\calP':= \calP_k$
		\For{$J\times S\in \calM_k$}		
			\State $\calP'\gets (\calP' \setminus \{J\times S\} )\cup\textup{ATOMIC}\underline{~}\textup{SPLIT}(J\times S, d, s_1, s_2)$
		\EndFor		
		\State $\calP_{k+1}\gets \calP'$
		\State $k\gets k+1$
	\EndWhile
	\State $\calP\gets \calP_k$
	\State \Return $\calP$
\end{algorithmic}
\end{algorithm}

\begin{rem}\label{Rem_Greedy}
	If the above algorithm terminates in a finite number $k\in\N$ of steps, the complexity estimate $\# \calP_j - \# \calP_0 \lesssim_{\, d, s_1, s_2} \sum\limits_{i=0}^{j-1}\#\calM_i$
	holds for any $j\in \{1,\dots, k\}$ due to the third estimate in Lemma \ref{Lemma_Anisotropic_Bisection_maintains_anisotropy}. 
\end{rem}

\begin{lem}\label{Lemma_dir_est_discont_2}	
	Let $p,q, s_1, s_2$, and $f$ be as in \cref{Theorem_dir_est_discont}. Then $\textup{GREEDY}(\delta)$ terminates for any \mbox{$\delta\in (0,\infty)$}.
\end{lem}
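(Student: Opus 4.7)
The plan is to argue by contradiction: suppose $\textup{GREEDY}(\delta)$ never terminates.  Then $\calM_k\neq\emptyset$ for every $k\in\N_0$, so infinitely many atomic splits are performed.  Since $\calP_0$ has only finitely many elements and $\textup{ATOMIC}\underline{~}\textup{SPLIT}$ has uniformly bounded branching factor (by \cref{Lemma_Anisotropic_Bisection_maintains_anisotropy}), König's lemma applied to the refinement forest would produce some $J_0\times S_0\in \calP_0$ together with an infinite descending chain $J_0\times S_0\supsetneq J_1\times S_1\supsetneq \dots$ of descendants, each of which was marked at some step $k_i$, i.e. $\inf_{P\in \Pi^{r_1,r_2}_{t,\bm{x}}(J_i\times S_i)}\|f-P\|_{L_p(J_i\times S_i)}>\delta$.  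I would argue that this is impossible.

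The geometry of $\textup{ATOMIC}\underline{~}\textup{SPLIT}$ gives $|S_i|=2^{-i}|S_0|$ and $|J_i|=2^{-\lceil i s_2/(s_1 d)\rceil}|J_0|$, and hence
\begin{align*}
    |J_i\times S_i|\le 2^{-i\left(1+\frac{s_2}{s_1d}\right)}|J_0\times S_0|.
\end{align*}
By \cref{Lemma_Anisotropic_Bisection_maintains_anisotropy} the shape-regularity constants $\kappa_{S_i}$ and the anisotropy constants $a(\calP_{k_i})$ stay uniformly bounded by multiples of $\kappa_{\calP_0}$ and $a(\calP_0)$.  Therefore the local Whitney estimate \eqref{thm:Whitney - equation local} applies to each $J_i\times S_i$ with a single constant $C=C(d,p,q,s_1,s_2,\kappa_{\calP_0},a(\calP_0))$, giving
\begin{align*}
    \inf_{P\in\Pi^{r_1,r_2}_{t,\bm{x}}(J_i\times S_i)}\|f-P\|_{L_p(J_i\times S_i)}\le C\,|J_i\times S_i|^{\alpha}\,|f|_{B^{s_1,s_2}_{q,q}(J_i\times S_i)},
\end{align*}
with $\alpha=\tfrac{1}{1/s_1+d/s_2}-\tfrac{1}{q}+\tfrac{1}{p}>0$.

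Since the moduli of smoothness decrease under restriction to a smaller underlying domain (exactly the inclusion $J_{r_1,h}\times S\subset I_{r_1,h}\times D$ and $J\times S_{r_2,\bm{h}}\subset I\times D_{r_2,\bm{h}}$ used in \cref{Corollary to Lemma equivalence of (quasi-)seminorms for polyhedral domains}), we have the monotonicity $|f|_{B^{s_1,s_2}_{q,q}(J_i\times S_i)}\le |f|_{B^{s_1,s_2}_{q,q}(I\times D)}<\infty$.  Inserting this and the size estimate yields
\begin{align*}
    \delta<\inf_{P}\|f-P\|_{L_p(J_i\times S_i)}\le C\cdot 2^{-i\,\alpha\left(1+\frac{s_2}{s_1 d}\right)}|J_0\times S_0|^{\alpha}\,|f|_{B^{s_1,s_2}_{q,q}(I\times D)}\xrightarrow{i\to\infty}0,
\end{align*}
a contradiction, so $\textup{GREEDY}(\delta)$ must terminate.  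Equivalently, one can avoid König's lemma by choosing, for each $J_0\times S_0\in\calP_0$, a depth $n^{\ast}=n^{\ast}(\delta,J_0\times S_0,f)$ beyond which the above right-hand side is $\le \delta$, so no descendant of that depth is ever marked.

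The main obstacle is guaranteeing that the Whitney constant does not degrade with refinement: without the uniform bounds on $\kappa_{\calP_k}$ and $a(\calP_k)$ provided by \cref{Lemma_Anisotropic_Bisection_maintains_anisotropy}, the factor in front of $|J_i\times S_i|^{\alpha}$ could grow with $i$ and the geometric decay might fail.  The whole proof therefore rests on combining that shape-regularity statement with the local, rather than global, form of \cref{thm:Whitney}.
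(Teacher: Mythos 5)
Your proof is correct and follows essentially the same route as the paper: contradiction, apply the local Whitney estimate \eqref{thm:Whitney - equation local} with constants kept uniform by \cref{Lemma_Anisotropic_Bisection_maintains_anisotropy}, and use the geometric decay of $|J\times S|$ together with the monotonicity of the Besov seminorm under restriction. The König's-lemma extraction of an infinite chain is heavier machinery than needed; since the marking criterion depends only on $J\times S$ and $f$ (not on the step), any cell in $\calM_k$ must have level exactly $k$, so all elements of $\calM_k$ already have size $\lesssim 2^{-k(1+s_2/(s_1d))}$, which is the observation the paper uses directly and which you yourself note as the simplification at the end.
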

	
\begin{proof}
	Assume that the algorithm does not terminate. Then, for each $k\in\N_0$, there exists some $J_k\times S_k\in \calM_k\subset \calP_k$ with 
    \begin{align*}
        \delta<\inf\limits_{P\in \Pi^{r_1, r_2}_{t,\bm{x}}(J_k\times S_k)} \|f-P\|_{L_p(J_k\times S_k)} .
    \end{align*}
    Let $P_k$ be the anisotropic polynomial in the local Whitney estimate on $J_k\times S_k$ from \cref{thm:Whitney}. Thus, it holds
	\begin{align*}
		\delta<  \|f-P_k\|_{L_p(J_k\times S_k)}&\lesssim_{\,d,p,q,s_1,s_2, \kappa_{\calP_0}, a(\calP_0)} |J_k\times S_k|^{\frac{1}{\frac{1}{s_1}+\frac{d}{s_2}}-\frac{1}{q}+\frac{1}{p}}|f|_{B^{s_1, s_2}_{q,q}(J_k\times S_k)}
	\end{align*}
	due to $\kappa_{\calP_k}\lesssim_{\,d}\kappa_{\calP_0} $ and $a(\calP_k)\lesssim (\calP_0)$ which were shown in \cref{Lemma_Anisotropic_Bisection_maintains_anisotropy}. Since $|J_k\times S_k|\xrightarrow{k\rightarrow\infty} 0$, as well as \mbox{$\frac{1}{\frac{1}{s_1}+\frac{d}{s_2}}-\frac{1}{q}+\frac{1}{p}>0$}, and $|f|_{B^{s_1, s_2}_{q,q}(J_k\times S_k)}\le |f|_{B^{s_1, s_2}_{q,q}(I\times D)}<\infty$, we  conclude $\delta \le 0$ for $k\rightarrow\infty$, which is a contradiction. Thus, the algorithm terminates.
\end{proof}

Now we can proceed to the proof of the last of the three main results stated in \cref{sect:Main results}.

\begin{proof}[Proof of \cref{Theorem_dir_est_discont}]
	If $|f|_{B^{s_1, s_2}_{q,q}(I\times D)}=0$, then $f\in \Pi^{r_1, r_2}_{t,\bm{x}}(I\times D)$ according to \cref{Lemma moduli of smoothness = 0 => anisotropic polynomials}, and therefore the assertion is proven with the choice of $F:=f\in \mathbb{V}^{r_1 , r_2}_{\calP_0, \textup{DC}}(I\times D)$. Now assume $|f|_{B^{s_1, s_2}_{q,q}(I\times D)}>0$, set $\delta:= \varepsilon^{1+\frac{1}{s_1p}+\frac{d}{s_2p}}|f|_{B^{s_1, s_2}_{q,q}(I\times D)}$ and let $\calP:=\textup{GREEDY}(\delta)$, which is well defined because the algorithm terminates in a finite number $k\in\N_0$ of steps, due to Lemma~\ref{Lemma_dir_est_discont_2}. Moreover, due to \cref{Rem_Greedy}, 
	\begin{align}\label{Theorem_dir_est_discont_mesh_size_1}
		\# \calP - \# \calP_0= \# \calP_k - \# \calP_0 \lesssim_{\,d,s_1, s_2} \sum\limits_{j=0}^{k-1}\#\calM_j= \#\overline{\calM}
	\end{align}
	with $\overline{\calM}:=\bigcup\limits_{j=0}^{k-1}\calM_j$. Beware that in fact, $\calM_j=\left\{J\times S \in \overline{\calM}\mid \ell(J\times S)=j\right\}$ for $j\in\{0,\cdots, k-1\}$. Further, we set $\calM_j:=\emptyset$ if $j\ge k$. Then, for any such $j$ we can estimate
	\begin{align}\label{Theorem_dir_est_discont_estimate_min}
	\min\limits_{J\times S\in \calM_j}|J\times S|&\ge \min\limits_{\substack{J\times S\in \textup{ATOMIC}\underline{~}\textup{SPLIT}(J_0\times S_0, d, s_1, s_2)^{\otimes j},\\ J_0\times S_0 \in \calP_0}}|J\times S| \gtrsim_{\, \min\limits_{J_0\times S_0 \in \calP_0}|J_0\times S_0|}2^{-j\left(1+\frac{s_2}{s_1d}\right)}
	\end{align}		
	and therefore $\#\calM_j \lesssim_{\, |I\times D|, \min\limits_{J_0\times S_0 \in \calP_0}|J_0\times S_0|} 2^{j\left(1+\frac{s_2}{s_1d}\right)}$. Also, we can derive
	\begin{align*}
	 	\#\calM_j \,\delta^q &\le \sum\limits_{J\times S\in \calM_j} \left(\inf\limits_{P\in \Pi^{r_1, r_2}_{t,\bm{x}}(J\times S)} \|f-P\|_{L_p(J\times S)}\right)^q
	 	\\&\lesssim_{\, d,p,q,s_1, s_2, \kappa_{\calP_0},a(\calP_0)}\sum\limits_{J\times S\in \calM_j}|J\times S|^{q\left(\frac{1}{\frac{1}{s_1}+\frac{d}{s_2}}-\frac{1}{q}+\frac{1}{p}\right)}|f|^q_{B^{s_1, s_2}_{q,q}(J\times S)}\nonumber
	 	\\&\lesssim_{\,\max\limits_{J_0\times S_0 \in \calP_0}|J_0\times S_0|} 2^{-jq\left(1+\frac{s_2}{s_1d}\right)\left(\frac{1}{\frac{1}{s_1}+\frac{d}{s_2}}-\frac{1}{q}+\frac{1}{p}\right)}\sum\limits_{J\times S\in \calM_j}|f|^q_{B^{s_1, s_2}_{q,q}(J\times S)}\nonumber
	 	\\&\lesssim_{\,d,q,s_1, s_2, \kappa_{\calP_0}}2^{-jq\left(1+\frac{s_2}{s_1d}\right)\left(\frac{1}{\frac{1}{s_1}+\frac{d}{s_2}}-\frac{1}{q}+\frac{1}{p}\right)}|f|^q_{B^{s_1, s_2}_{q,q}(I\times D)},\nonumber
	\end{align*}
	for $q<\infty$, where we have used the local estimate from \cref{thm:Whitney} together with \cref{Lemma_Anisotropic_Bisection_maintains_anisotropy} in the second step, the reverse estimate to \eqref{Theorem_dir_est_discont_estimate_min} in the third, and Corollary \ref{Corollary to Lemma equivalence of (quasi-)seminorms for polyhedral domains} in the last one. Together, these estimates can be summarized as 
	\begin{align}\label{2-series}
	 	\#\calM_j\lesssim_{\,d, p, q, s_1, s_2, \kappa_{\calP_0}, a(\calP_0), |I\times D|, \mu_1, \mu_2}\min\left(2^{j\left(1+\frac{s_2}{s_1d}\right)}, \frac{1}{\delta^q}2^{-jq\left(1+\frac{s_2}{s_1d}\right)\left(\frac{1}{\frac{1}{s_1}+\frac{d}{s_2}}-\frac{1}{q}+\frac{1}{p}\right)}|f|^q_{B^{s_1, s_2}_{q,q}(I\times D)}\right) 
	\end{align}
    with the abbreviations $\mu_1:=\min\limits_{J_0\times S_0 \in \calP_0} |J_0\times S_0|$ and $\mu_2:=\max\limits_{J_0\times S_0 \in \calP_0} |J_0\times S_0|$.
	Since the left series in \eqref{2-series} is monotonically increasing in $j\in\N_0$, whereas the right one is decreasing, there exists 
	\begin{align}\label{Theorem_dir_est_discont_estimate_j_0}
	 	j_0=\min\left\{j\in \N_0 \ \bigg| \ 2^{j\left(1+\frac{s_2}{s_1d}\right)} > \frac{1}{\delta^q}2^{-jq\left(1+\frac{s_2}{s_1d}\right)\left(\frac{1}{\frac{1}{s_1}+\frac{d}{s_2}}-\frac{1}{q}+\frac{1}{p}\right)}|f|^q_{B^{s_1, s_2}_{q,q}(I\times D)}\right\}\in \N_0.
	\end{align}
	Therefore, with the abbreviation $t=t(d,p,q,s_1,s_2):=2^{q\left(1+\frac{s_2}{s_1d}\right)\left(\frac{1}{\frac{1}{s_1}+\frac{d}{s_2}}-\frac{1}{q}+\frac{1}{p}\right)}>1$, we can further estimate
	\begin{align}
	 	\#\overline{\calM}=\sum\limits_{j=0}^{\infty} \#\calM_j&= \sum\limits_{j=0}^{j_0-1} \#\calM_j + \sum\limits_{j=j_0}^{\infty} \#\calM_j \nonumber 
	 	\\&\lesssim_{\,d, p, q, s_1, s_2, \kappa_{\calP_0}, a(\calP_0), |I\times D|, \mu_1, \mu_2} \sum\limits_{j=0}^{j_0-1} 2^{j\left(1+\frac{s_2}{s_1d}\right)} + \frac{|f|^q_{B^{s_1, s_2}_{q,q}(I\times D)}}{\delta^q}\sum\limits_{j=j_0}^{\infty} t^{-j}\nonumber
	 	\\&\lesssim_{\,d,p,q,s_1,s_2} 2^{j_0\left(1+\frac{s_2}{s_1d}\right)} + \frac{|f|^q_{B^{s_1, s_2}_{q,q}(I\times D)}}{\delta^q}t^{-j_0}
        \lesssim  2^{j_0\left(1+\frac{s_2}{s_1d}\right)},\label{Theorem_dir_est_discont_mesh_size_2}
	\end{align}
	 where we have exploited our choice of $j_0$ in the penultimate step. The case $q=\infty$ admits the same result by a very similar calculation. Now we continue by finding a bound on $2^{j_0\left(1+\frac{s_2}{s_1d}\right)}$. If $j_0=0$, then $1$ is obviously a bound. Now assume $j_0\ge 1$. Due to the minimality of $j_0$, the inequality in \eqref{Theorem_dir_est_discont_estimate_j_0} holds true in the reversed form for $j_0-1$, i.e.,
    \begin{align*}
	 	2^{(j_0-1)\left(1+\frac{s_2}{s_1d}\right)}\le \frac{1}{\delta^q}2^{-(j_0-1)q\left(1+\frac{s_2}{s_1d}\right)\left(\frac{1}{\frac{1}{s_1}+\frac{d}{s_2}}-\frac{1}{q}+\frac{1}{p}\right)}|f|^q_{B^{s_1, s_2}_{q,q}(I\times D)}.
	\end{align*}
	This leads to
    \begin{align*}
	 	2^{\frac{1}{q}\left(j_0-1\right)\left(1+\frac{s_2}{s_1d}\right)}\,2^{(j_0-1)\left(1+\frac{s_2}{s_1d}\right)\left(\frac{1}{\frac{1}{s_1}+\frac{d}{s_2}}-\frac{1}{q}+\frac{1}{p}\right)}=2^{(j_0-1)\left(1+\frac{s_2}{s_1d}\right)\left(\frac{1}{\frac{1}{s_1}+\frac{d}{s_2}}+\frac{1}{p}\right)}\le \frac{|f|_{B^{s_1, s_2}_{q,q}(I\times D)}}{\delta}
	\end{align*}
	and therefore 
	\begin{align}
	 	2^{j_0\left(1+\frac{s_2}{s_1d}\right)}&\lesssim_{\,d,s_1,s_2} \left(\frac{|f|_{B^{s_1, s_2}_{q,q}(I\times D)}}{\delta}\right)^{\frac{1}{\left(\frac{1}{\frac{1}{s_1}+\frac{d}{s_2}}+\frac{1}{p}\right)}}\notag 
	 	\\&=\left(\varepsilon^{-\left(1+\frac{1}{s_1p}+\frac{d}{s_2p}\right)}\right)^{\frac{1}{\left(\frac{1}{\frac{1}{s_1}+\frac{d}{s_2}}+\frac{1}{p}\right)}}=\varepsilon^{-\left(\frac{1}{s_1}+\frac{d}{s_2}\right)}. \label{Theorem_dir_est_discont_mesh_size_3}
	\end{align}
	Combining \eqref{Theorem_dir_est_discont_mesh_size_1}, \eqref{Theorem_dir_est_discont_mesh_size_2}, and \eqref{Theorem_dir_est_discont_mesh_size_3} now gives the asserted estimate \eqref{Theorem_dir_est_discont_1}. We will denote the constant involved in \eqref{Theorem_dir_est_discont_1} by $C$ and the one from \eqref{thm:Whitney - equation global}, with $|I\times D|^{\frac{1}{\frac{1}{s_1}+\frac{d}{s_2}}-\frac{1}{q}+\frac{1}{p}}$ incorporated, with respect to $\calP_0$ by $C_1$. Without loss of generality, we increase $C$ such that, in particular $C\ge C_1^{\frac{1}{s_1}+\frac{d}{s_2}}$. Now we have to distinguish two cases. If $\varepsilon^{\frac{1}{s_1}+\frac{d}{s_2}}>C $, then \eqref{Theorem_dir_est_discont_1} implies that $\#\calP - \#\calP_0 < 1$, i.e., $\#\calP = \#\calP_0$, and therefore $\calP =\calP_0$. Now the choice of $C$, $C_1$, and \eqref{thm:Whitney - equation global} yield the existence of some $P\in \V^{r_1, r_2}_{\calP_0, \textup{DC}}=\V^{r_1, r_2}_{\calP, \textup{DC}}$ with
    \begin{align*}
        \|f-P\|_{L_p(I\times D)}\le C_1 |f|_{B^{s_1,s_2}_{q,q}(I\times D)}\le C^{\frac{1}{\frac{1}{s_1}+\frac{d}{s_2}}}|f|_{B^{s_1,s_2}_{q,q}(I\times D)}<\varepsilon |f|_{B^{s_1,s_2}_{q,q}(I\times D)} .
    \end{align*}
    Now consider $\varepsilon^{\frac{1}{s_1}+\frac{d}{s_2}}\le C $ and let $P_{J\times S}\in \Pi^{r_1,r_2}_{t,\bm{x}}(J\times S)$ be a best approximation of $f$ with respect to $\|\cdot\|_{L_p(J\times S)}$ on $J\times S\in \calP$. Such $P_{J\times S}$ always exists (not necessarily unique), since $\Pi^{r_1,r_2}_{t,\bm{x}}(J\times S)$ is finite dimensional. Further, define $P:=\sum\limits_{J\times S\in \calP}\mathds{1}_{J\times S}P_{J\times S}\in \V^{r_1, r_2}_{\calP, \textup{DC}}(I\times D)$ and estimate
	 \begin{align*}
	 	\|f-P\|_{L_p(I\times D)}^p&= \sum\limits_{J\times S\in \calP}\|f-P_{J\times S}\|_{L_p(J\times S)}^p\le \sum\limits_{J\times S\in \calP} \delta^p= \#\calP\, \delta^p
	 	\le \left(C\varepsilon^{-\left(\frac{1}{s_1}+\frac{d}{s_2}\right)}  + \#\calP_0\right)\delta^p
        \\&\le C(1+\#\calP_0) \varepsilon^{-\left(\frac{1}{s_1}+\frac{d}{s_2}\right)} \delta^p
	 	\\&\lesssim_{\, \# \calP_0}C\varepsilon^{-\left(\frac{1}{s_1}+\frac{d}{s_2}\right)} \varepsilon^{\left(1+\frac{1}{s_1p}+\frac{d}{s_2p}\right)p}|f|^p_{B^{s_1, s_2}_{q,q}(I\times D)}=C\varepsilon^p |f|^p_{B^{s_1, s_2}_{q,q}(I\times D)},
	 \end{align*}
	 with the use of \eqref{Theorem_dir_est_discont_1}, for $p<\infty$. The case $p=\infty$ works in the same way by replacing the sums by the corresponding suprema and observing that in this case $\delta=\varepsilon|f|_{B^{s_1, s_2}_{q,q}(I\times D)}$. In any case, this directly implies assertion \eqref{Theorem_dir_est_discont_2}. 
\end{proof}

\appendix
\section{Proofs of auxiliary lemmata}\label{Appendix:Proof of Lemma equivalence averaged spacial modulus of smoothness}

\begin{proof}[Proof of \cref{Remark Lipschitz domain}\ref{Remark Lipschitz domain - interior cone condition}]
    For any $j\in\{1,\dots, \LipCov(D)\}$, $D_j=\Gamma_j(G_j)$ for a rotation $\Gamma_j$ and a Lipschitz graph domain $G_j$ with corresponding Lipschitz constant $\Lip(D)$. Therefore, an infinite cone $\mathcal{C}_{inf}=\mathcal{C}_{inf}(\Lip(D))$ exists, such that $G_j+\mathcal{C}_{inf}\subset G_j$, and thus, $D_j +\mathcal{C}_{inf,j}\subset D_j $ with $\mathcal{C}_{inf,j}:=\Gamma_j(\mathcal{C}_{inf})$, for any $j\in\{1,\dots, \LipCov(D)\}$. Now we cut $\mathcal{C}_{inf,j}$ off appropriately such that we get a bounded cone $\mathcal{C}_j$ with $\diam(\mathcal{C}_j)\le\frac{\LipDelta(D)}{4}$ for any such $j$. In particular, these cones can be chosen such that $(\mathcal{C}_j)_{j=1,\dots, \LipCov(D)}$ is a set of congruent cones. Now let $\bm{x}\in D$. If $d(\bm{x}, \partial D)\ge \frac{\LipDelta(D)}{2}$, then $d(\bm{x}+\mathcal{C}_1, \partial D)\ge \frac{\LipDelta(D)}{4}$, therefore $\bm{x}+\mathcal{C}_1\subset D$. If otherwise $d(\bm{x}, \partial D)< \frac{\LipDelta(D)}{2}$, then, according to \cref{Definition of Lipschitz domain}\ref{Definition of Lipschitz domain - Delta part}), there must be \mbox{$j\in\{1,\dots,\LipCov(D)\}$} such that $\bm{x}\in U_j$ and $d(\bm{x},\partial U_j)>\LipDelta(D)$. Therefore, $\bm{x}+\mathcal{C}_j\subset U_j$, due to $d(\bm{x}+\mathcal{C}_j,\partial U_j)> \frac{3}{4}\LipDelta(D)$. Further, the choice of $\mathcal{C}_j\subset \mathcal{C}_{inf,j}$ implies that $\bm{x}+\mathcal{C}_j\subset D_j$, since $\bm{x}\in D\cap U_j=D_j \cap U_j$ due to \cref{Definition of Lipschitz domain}\ref{Definition of Lipschitz domain - special Lipschitz domain part}). The latter now also shows $\bm{x}+\mathcal{C}_j\subset D$, since $\bm{x}+\mathcal{C}_j\subset D_j\cap U_j=D\cap U_j\subset D$.
\end{proof}

\begin{proof}[Proof of \cref{Remark Lipschitz domain}\ref{Definition of Lipschitz domain - delta < diam}]
    Assume $\LipDelta(D)>\diam(D)$ was true. Then 
    \begin{align*}
        \max\{d(\bm{x},\partial D),d(\bm{y},\partial D),|\bm{x}-\bm{y}|\le \diam(D)<\LipDelta(D)
    \end{align*}    
    would be true for arbitrary $\bm{x},\bm{y}\in D$. Therefore, \cref{Definition of Lipschitz domain}\ref{Definition of Lipschitz domain - Delta part}) implies that there is \mbox{$j\in\{1,\dots,\LipCov(D)\}$} such that $D\subset U_j$. However, $d(D, U_j)>0$, i.e. $D\subsetneq U_j$ since $\LipDelta(D)$ is strictly larger than $\diam(D)$. Hence, since $D_j$ unbounded, $D\cap D_j$ must be a true subset of $U_j \cap D_j$. But also $D\cap D_j = (U_j\cap D_j)\cap D_j= U_j\cap D_j$. This now clearly is a contradiction.
\end{proof}

\begin{proof}[Proof of \cref{Lemma equivalence averaged spacial modulus of smoothness}]

The first inequality is trivial. The second one is, too, if $p=\infty$. Regarding the second estimate for \mbox{$p\in (0,\infty)$}, we will proceed similar to \cite[Lemma~4.1]{GM14}. Again we use that the estimate remains invariant under affine transformations, in particular, scaling by multiples of the unit matrix, which do not change the Lipschitz properties of $D$. Therefore, we can assume $\diam(D)=1$ without loss of generality.

Now we set $\rho:=\frac{\LipDelta(D)}{4}$, $t\in \Big(0,\frac{\rho}{r^2}\Big]$, and $0\le |\bm{h}|\le t$. Further, by $(D_j)_{j=1,\dots,\LipCov(D)}$ and $(\mathcal{C}_j)_{j=1,\dots,\LipCov(D)}$ we denote the set of special Lipschitz domains from \cref{Definition of Lipschitz domain} and corresponding cones from \cref{Remark Lipschitz domain}\ref{Remark Lipschitz domain - interior cone condition}, chosen as in the corresponding proof above, with \mbox{$\mathcal{C}_j^t:=\{\bm{x}\in \R^d\mid \frac{\bm{x}}{t}\in \mathcal{C}_j\}$} for $j=1,\dots, \LipCov(D)$. Next, define \mbox{$D^j:=\left(D\cap U_j\right)_{r,\bm{h}}=\left(D_j\cap U_j\right)_{r,\bm{h}}$}, due to \cref{Definition of Lipschitz domain}\ref{Definition of Lipschitz domain - special Lipschitz domain part}) (see Definition \ref{def:diff_op} regarding the notation \mbox{$(D\cap U_j)_{r,\bm{h}}$} etc.), \mbox{$j=1,\dots, \LipCov(D)$}, and \mbox{$D_{\LipCov(D)+1}:=D\setminus D_{\frac{\LipDelta(D)}{2}}$} with 
\begin{align*}
D_{\frac{\LipDelta(D)}{2}}:=\left\{\bm{z}\in D\mid d(\bm{z},\partial D)<\frac{\LipDelta(D)}{2}\right\},
\end{align*}
$\mathcal{C}_{\LipCov(D)+1}:=\mathcal{C}_1$, and $\mathcal{C}^t_{\LipCov(D)+1}$ consistent to the above cases. \\

\begin{minipage}{0.65\textwidth}
We will first show that $D_{r,\bm{h}}\subset \bigcup\limits_{j=1}^{\LipCov(D)+1}D^j$. Since, $D^{\LipCov(D)+1}=D\setminus D_{\frac{\LipDelta(D)}{2}}\subset D_{r,\bm{h}}$, due to \mbox{$|i \bm{h}|\le rt\le \frac{\LipDelta(D)}{4}<\frac{\LipDelta(D)}{2}$} for any $i=1,\dots, r$, it is sufficient to show $D_{r,\bm{h}}\cap D_{\frac{\LipDelta(D)}{2}}\subset\bigcup\limits_{j=1}^{\LipCov(D)}D^j$. \\
This can be seen as follows. Let $\bm{x}\in D_{r,\bm{h}}\cap D_{\frac{\LipDelta(D)}{2}}$, then $d(\bm{x},\partial D)<\frac{\LipDelta(D)}{2}<\LipDelta(D)$ and $\bm{x}+r\bm{h}\in D$. As above, for any $i=1,\dots, r$, $|i\bm{h}|<\frac{\LipDelta(D)}{2}$ and therefore by the triangle inequality $d(\bm{x}+i\bm{h},\partial D)<\LipDelta(D)$. Now \cref{Definition of Lipschitz domain}\ref{Definition of Lipschitz domain - Delta part}) implies the existence of $j\in \{1,\dots, \LipCov(D)\}$ with $\bm{x}$, $\bm{x}+i\bm{h}\in U_j$. Thus, $\bm{x}\in D^j \subset \bigcup\limits_{j=1}^{\LipCov(D)+1} D^j$. 
\end{minipage}\hfill \begin{minipage}{0.35\textwidth}
	\begin{center}
		\includegraphics[height=4cm]{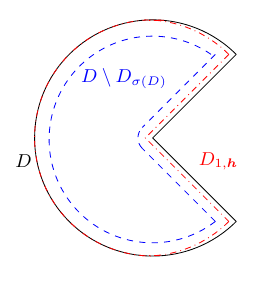}
        \captionof{figure}{Illustration for $r=1$}
	\end{center}
\end{minipage}\\

Further, we proceed to show that $D^j \subset D_{r, \bm{h}+r\bm{s}}$, if $\bm{s} \in \mathcal{C}_j^t$ for any $j=1,\dots, \LipCov(D)+1$. First, consider $j\le \LipCov(D)$. Then $\bm{x}+i\bm{h}\in D^j$ for any $i=1,\dots, r $, and therefore $\bm{x}+i\bm{h}+\mathcal{C}_j\subset D$ due to our choice of the cone $\mathcal{C}_j$. Since, $\bm{s}\in \mathcal{C}_j^t$, $\frac{\bm{s}}{t}\in\mathcal{C}_j$, and $\lambda\,\frac{\bm{s}}{t}\in\mathcal{C}_j$ for any $\lambda\in [0,1]$. Choosing $\lambda:=tm$ for $m\in\{0,\dots, r^2\}$, then gives \mbox{$0\le\lambda=tm\le \frac{\rho}{r^2}\,r^2=\rho<\LipDelta(D)\le \diam(D)=1$}, which means $m\bm{s}\in\mathcal{C}_j$. Together, this shows that $x\in D_{r, \bm{h}+r\bm{s}}$ for such $\bm{s}$. Similarly, for $j=\LipCov(D)+1$: Then $\bm{x}\in D^j$ fulfills $d(\bm{x},\partial D)>\frac{\LipDelta(D)}{2}$ and $x+i\bm{h}+m\bm{s}\in D$ now follows due to $|i\bm{h}+m\bm{s}|\le |i\bm{h}|+|m\bm{s}|\le \frac{\LipDelta(D)}{4} + \frac{\LipDelta(D)}{4} = \frac{\LipDelta(D)}{2}$. Here $|i\bm{h}|\le \frac{\LipDelta(D)}{4}$ can be justified identically to the case $j\le \LipCov(D)$, and $m\bm{s}= tm\,\frac{\bm s}{t}\in \mathcal{C}_{\LipCov(D)+1}$, where $\frac{\bm s}{t}\in \mathcal{C}_{\LipCov(D)+1}$, can be shown as above as well. The latter implies the postulated $|m\bm{s}|\le \diam(\mathcal{C}_j)\le \frac{\LipDelta(D)}{4}$.

Now fix $j\in \{1,\dots, \LipCov(D)+1\}$ and $\bm{s} \in \mathcal{C}_j^t$. Due to the shown result $D^j \subset D_{r, \bm{h}+r\bm{s}}$, the formula
\begin{align*}
    \Delta_{\bm{h},\bm{x}}^r f(\tau,\bm{z})=\sum\limits_{l=1}^{r}(-1)^{l}\binom{r}{l}\left(\Delta^r_{l\bm{s},\bm{x}}f(\tau, \bm{z}+l\bm{h}) - \Delta^r_{\bm{h}+l\bm{s},\bm{x}}f(\tau, \bm{z})\right),
\end{align*}
which one can easily derive algebraically as in the proof of \cite[Lem.~4.1]{GM14}, is well-defined for any $\tau\in I$ and $\bm{z}\in D^j$. Thus, we can estimate
\begin{align*}
    \|\Delta_{\bm{h},\bm{x}}^r f\|^p_{L_p(I\times D^j)}&\lesssim_{\,p,r}\sum\limits^r_{l=1}\binom{r}{l}\left(\|\Delta^r_{l\bm{s},\bm{x}}f(\cdot, \cdot+l\bm{h})\|^p_{L_p(I\times D_{r, \bm{h}+l\bm{s}})}+\|\Delta^r_{\bm{h}+l\bm{s},\bm{x}}f\|^p_{L_p(I\times D_{r, \bm{h}+l\bm{s}})}\right)
    \\&\le \sum\limits^r_{l=1}\binom{r}{l}\left(\|\Delta^r_{l\bm{s},\bm{x}}f\|^p_{L_p(I\times D_{r, l\bm{s}})}+\|\Delta^r_{\bm{h}+l\bm{s},\bm{x}}f\|^p_{L_p(I\times D_{r, \bm{h}+l\bm{s}})}\right).
\end{align*}
Averaging this over all $s\in \mathcal{C}_j^t$, yields
\begin{align*}
     \|\Delta_{\bm{h},\bm{x}}^r f\|^p_{L_p(I\times D^j)}&\lesssim_{\,p,r}\frac{1}{|\mathcal{C}_j^t|}\sum\limits^r_{l=1}\binom{r}{l} \left(\int\limits_{\mathcal{C}_j^t}\|\Delta^r_{l\bm{s},\bm{x}}f\|^p_{L_p(I\times D_{r, l\bm{s}})}\,d\bm{s}+\int\limits_{\mathcal{C}_j^t}\|\Delta^r_{\bm{h}+l\bm{s},\bm{x}}f\|^p_{L_p(I\times D_{r, \bm{h}+l\bm{s}})}\,d\bm{s}\right)
     \\&\lesssim_{\,\Lip(D),\LipDelta(D)} \frac{1}{t^d}\sum\limits^r_{l=1}\binom{r}{l} \left(\int\limits_{\mathcal{C}_j^{lt}}\|\Delta^r_{\bm{u},\bm{x}}f\|^p_{L_p(I\times D_{r,\bm{u}})}\,d\bm{u}+\int\limits_{\mathcal{C}_j^{lt}+\bm{h}}\|\Delta^r_{\bm{v},\bm{x}}f\|^p_{L_p(I\times D_{r, \bm{v}})}\,d\bm{v}\right),
\end{align*}
    where we have used $|\mathcal{C}_j^t|=|\mathcal{C}_j|\,t^d \sim_{\,\Lip(D),\LipDelta(D)}t^d$ as well as the substitutions $\bm{u}:=l\bm{s}$ and $\bm{v}:=\bm{h}+l\bm{s}$. Since $\diam(\mathcal{C})\le\diam(D)=1$, we have $\mathcal{C}_j\subset [-1,1]^d$. This shows $\mathcal{C}_j^{lt}\subset [-lt,lt]^d\subset [-rt,rt]^d $ and, due to $|\bm{h}|\le t$, $\mathcal{C}_j^{lt}+\bm{h}\subset [-(r+1)t,(r+1)t]^d$. This allows us to further estimate
\begin{align*}
     \|\Delta_{\bm{h},\bm{x}}^r f\|^p_{L_p(I\times D^j)}&\lesssim_{\,p,r,\Lip(D),\LipDelta(D)} \frac{1}{t^d} \int\limits_{[-(r+1)t,(r+1)t]^d} \|\Delta^r_{\bm{u},\bm{x}}f\|^p_{L_p(I\times D_{r,\bm{u}})}d\bm{u}\lesssim_{\,d,r}w_r(f, I\times D, (r+1)t)_p^p.
\end{align*}
    Summing this for any $j=1,\dots, \LipCov(D)+1$, going over to the supremum over all $|\bm{h}|\le t$ and taking the $\frac{1}{p}$-th power gives $\omega_r(f, I\times D, t)_p\lesssim_{\,d,p,r,\LipProp(D)} w_r(f, I\times D, (r+1)t)_p$. Together with the scaling result $\omega_r(f, I\times D, (r+1)t)_p\lesssim_{\,p,r}\omega_r(f, I\times D, t)_p$ from \cref{Rem:Moduli of smoothness} and the observation $\delta:=(r+1)t\in \Big(0, \frac{r+1}{r^2}\,\frac{\LipDelta(D)}{4}\Big]\supset\Big(0, \frac{\LipDelta(D)}{4r}\Big]$, the assertion follows (since the case $\delta=0$ is trivial). 
\end{proof}

\begin{proof}[Proof of \cref{Lemma moduli of smoothness = 0 => anisotropic polynomials}.]

First, we will show assertion \ref{Lemma moduli of smoothness = 0 => anisotropic polynomials - 1}. The scaling estimates for the moduli of smoothness from \cref{Rem:Moduli of smoothness} and the monotonicity immediately imply $\omega_{r_1,t}(f, I\times D, |I|)_p=0=\omega_{r_2,\bm{x}}(f, I\times D, \diam(D))_p$. For $\tau\in I$, set $f_\tau:D\rightarrow \R$ via $f_\tau(\bm{z}):=f(\tau,\bm{z})$. Now Fubini leads to
	\begin{align*}
			\infty>\|f\|_{L_p(I\times D)}^p= \int\limits_I \|f_\tau\|_{L_p( D)}^p\, d\tau
	\end{align*}	 
	for $p<\infty$. Therefore, $f_\tau\in L_p(D)$ for almost every $\tau\in I$. Similarly, $f_\tau\in L_{\infty}(D)$ almost surely if $f\in L_\infty(I\times D)$. Further, $\omega_{r_2,\bm{x}}(f, I\times D, \diam(D))_p=0$ yields $\|\Delta^{r_2}_{\bm{h},\bm{x}}f\|_{L_p(I\times D)} = 0$ for any $\bm{h}\in \R^d$, which itself implies 
	\begin{align*}
		0 = \Delta_{\bm{h},\bm{x}}^{r_2}f(\tau,\bm{z}) = f(\tau,\bm{z}+\bm{h}) - f(\tau,\bm{z}) = f_\tau(\bm{z}+\bm{h}) - f_\tau(\bm{z}) = \Delta_{\bm{h}}^{r_2}f_\tau(\bm{z}).
	\end{align*}
	for almost every $(\tau,\bm{z})\in I\times D_{1,\bm{h}}$ and every $\bm{h}\in \R^d$. Thus, $\omega_{r_2}(f_\tau, D, \diam(D))_p = 0$ for almost every $\tau\in I$. According to \cite[Lem.~2.13]{DL04}, this means $f_\tau\in \Pi_{\bm{x}}^{r_2}(D)$ almost surely. This in turn leads to $c_\alpha\in L_p(I)$ for every $|\alpha|<r_2$ such that
\begin{align}\label{Lemma Glattheitsmoduli null => anisotropes Polynom - Beweis Schritt 1}
	f(\tau,\bm{z})=f_\tau(\bm{z})=\sum\limits_{|\alpha|<r_2} c_\alpha(\tau) \bm{z}^\alpha
\end{align}
for almost every $(\tau,\bm{z})\in I\times D$. Since $\omega_{r_1,t}(f, I\times D, |I|)_p=0$, we can similarly show that
\begin{align*}
	0=\Delta_{h,t}^{r_1}f(\tau,\bm{z})=\Delta_{h}^{r_1} f_{\bm{z}}(\tau)=\sum\limits_{|\alpha|<r_2} \left(\Delta_{h}^{r_1}c_\alpha(\tau)\right) \bm{z}^\alpha
\end{align*}
holds true almost surely for every $h\in \R$. Here, we have used the notation $f_{\bm{z}}(\tau):=f(\tau,\bm{z})$, which also yields $f_{\bm{z}}\in L_p(I)$ almost surely. Since the monomials $(\bm{z}^\alpha)_{|\alpha|<r_2}$ are linearly independent in $\Pi_{\bm{x}}^{r_2}(D)$, it follows that $\Delta_{h}^{r_1}c_\alpha(\tau)=0$ for almost every $\tau\in I$, every $|\alpha|<r_2$, and every $h\in \R$, i.e., $\omega_{r_1}(c_\alpha, I, |I|)_p = 0$. Again \cite[Lem.~2.13]{DL04} implies $c_\alpha\in \Pi_t^{r_1}(I)$. Together with \eqref{Lemma Glattheitsmoduli null => anisotropes Polynom - Beweis Schritt 1} this shows the assertion.

Now we can turn to \ref{Lemma moduli of smoothness = 0 => anisotropic polynomials - 2}. With the notation from above, we know that $f_\tau\in \Pi^{r_2}_{\bm{x}}(D)$ for almost every $\tau\in I$ and $f_{\bm{z}}\in \Pi^{r_1}_{t}(I)$ for almost every $\bm{z}\in D$. Therefore, $\Delta_h^{r_1}f_\tau= 0$ and $\Delta_{\bm{h}}^{r_2}f_{\bm{z}}= 0$ almost surely for $h\in \R$ and $\bm{h}\in \R^d$, due to \cite[Thm.~1.26]{Dek22}. This implies $\Delta_{h,t}^{r_1}f=\Delta_{\bm{h},\bm{x}}^{r_2}f=0$ almost surely for such $h$ and $\bm{h}$, respectively. But this directly implies the assertion.
\end{proof}

\paragraph{Acknowledgements.} The first author was partially supported by Agencia Nacional de Promoci\'on Cient\'ifica y Tecnol\'ogica through grant PICT-2020-SERIE A-03820, and by Universidad Nacional del Litoral through grants CAI+D-2020 50620190100136LI and CAI+D-2024 85520240100018LI. Further, part of this work was developed during the third author's visit to Santa Fe in Argentina which was supported by a fellowship for doctoral candidates of the German Academic Exchange Service (DAAD). These grants were gratefully appreciated. Further, we would like to thank an anonymous referee for his valuable feedback, in particular, regarding the fact that our proof of \cref{thm:Jackson} was incomplete in the initial version of this article.

\bibliographystyle{alpha}

\begin{thebibliography}{CKNS08m}

\bibitem[AGMSS25]{AGMSS25}
M. Actis, F. D. Gaspoz, P. Morin, C. Schneider, and N. Schneider. Direct estimates for adaptive time-stepping finite element methods, \emph{J. Complex.} \textbf{87} (2025), 101918. doi:10.1016/j.jco.2024.101918.

\bibitem[AMS23]{AMS23}
M. Actis, P. Morin, and C. Schneider, On approximation classes for adaptive time-stepping finite element methods, \emph{IMA J. Numer. Anal.} \textbf{43} (2023), no.~5, 2817--2855. doi:10.1093/imanum/drac056. 



\bibitem[AF03]{AF03}
R. A. Adams and J. J. F. Fournier, \emph{Sobolev Spaces}, 2nd ed., vol.~140 of \emph{Pure and Applied Mathematics}, Academic Press, Amsterdam, 2003. 

\bibitem[BIN79]{BIN79}
O. V. Besov, V. P. Il’in, and S. M. Nikol'ski\u{i}, \emph{Integral Representations of Functions and Imbedding Theorems}, vol.~2, V.H. Winston \& Sons, Washington, D.C., 1979.

\bibitem[BDDP02]{BDDP02}
P. Binev, W. Dahmen, R. A. DeVore, and P. Petrushev, Approximation classes for adaptive methods, \emph{Serdica Math. J.} \textbf{28} (2002), no.~4, 391--416.

\bibitem[BF73]{BF73}
C. De Boor and G. J. Fix, Spline approximation by quasiinterpolants, \emph{J. Approx. Theory} \textbf{8} (1973), no.~1, 19--45. doi:10.1016/0021-9045(73)90029-4.


\bibitem[Bru64]{Bru64}
J. A. Brudny\u{i}, On a theorem of local best approximations, \emph{Kazansk. Univ. Gos. U\u{c}en. Zap.} \textbf{124} (1964), no.~6, 43--49.

\bibitem[Bru70a]{Bru70a}
J. A. Brudny\u{i}, A multidimensional analogue of a theorem of Whitney, \emph{Sb. Math.} \textbf{11}, no.~2, (1970), 157--170. doi:10.1070/sm1970v011n02abeh002065.

\bibitem[Bru70b]{Bru70b}
J. A. Brudny\u{i}, Approximation of functions of $n$ variables by quasipolynomials, \emph{Izv. Math.} \textbf{4} (1970), no.~3,  564--583. doi:10.1070/im1970v004n03abeh000922.

\bibitem[DS23]{DS23}
S. Dahlke and C. Schneider, Anisotropic Besov regularity of parabolic PDEs, \emph{Pure Appl. Funct. Anal.} \textbf{8} (2023), no.~2, 457--476.


\bibitem[DDS80]{DDS80}
W. Dahmen, R. A. DeVore, and K. Scherer, Multi-dimensional spline approximation, \emph{SIAM J. Numer. Anal.} \textbf{17} (1980), no.~3, 380--402. doi:10.1137/0717033.

\bibitem[Dek22]{Dek22}
S. Dekel, \emph{Pointwise Variable Anisotropic Function Spaces on $\R^n$}, vol. 85 of \emph{De Gruyter Studies in Mathematics}, Berlin, De Gruyter, 2022.

\bibitem[DL04]{DL04}
S. Dekel and D. Leviatan, Whitney estimates for convex domains with applications to multivariate piecewise polynomial approximation, \emph{Found. Comput. Math.}, \textbf{4} (2004), 345--368. doi:10.1007/s10208-004-0096-3.


\bibitem[DeV76]{DeV76}
R. A. DeVore,  Degree of approximation, in \emph{Approximation Theory II}, eds. G. Lorentz, C. Chui, and L. Schumaker, Academic Press, New York, 1976, pp. 117--161. 

\bibitem[DeV98]{DeV98}
R. A. DeVore, Nonlinear approximation, \emph{Acta Numer.} \textbf{7} (1998), 51--150. doi:10.1017/s0962492900002816.

\bibitem[DL93]{DL93}
R. A. DeVore and G. G. Lorentz, \emph{Constructive Approximation}, vol.~303 of \emph{Grundlehren der mathematischen Wissenschaften}, Springer, Berlin, 1993.

\bibitem[DGS23]{DGS23}
L. Diening, L. Gehring, and J. Storn, Adaptive mesh refinement for arbitrary initial triangulations, \emph{Found. Comput. Math.}\textbf{} (2025), pp. 1--26. doi:10.1007/s10208-025-09698-7. 

\bibitem[Dit88]{Dit88}
Z. Ditzian, On the Marchaud-type inequality, \emph{Proc. Am. Math. Soc.}, \textbf{101} (1988), no.~1, 198--202. doi:10.2307/2047551.

\bibitem[DU11]{DU11}
D. Dung and T. Ullrich, Whitney type inequalities for local anisotropic polynomial approximation, \emph{J. Approx. Theory} \textbf{163} (2011), no.~11, 1590--1605. doi:10.1016/j.jat.2011.06.004

\bibitem[GM14]{GM14}
F. D. Gaspoz and P. Morin, Approximation classes for adaptive higher order finite element approximation, \emph{Math. Comp.} \textbf{83} (2014), no.~289, 2127--2160. doi:10.1090/s0025-5718-2013-02777-9. Corrected in, Errata to \enquote{Approximation classes for adaptive higher order finite element approximation}, \emph{Math. Comp.} \textbf{86} (2017), no.~305, 1525--1526. doi:10.1090/mcom/3243.

\bibitem[Hoc02]{Hoc02}
R. Hochmuth, Wavelet characterizations for anisotropic Besov Spaces, \emph{Appl. Comput. Harmon. Anal.} \textbf{12} (2002), 179--208. doi:10.1006/acha.2001.0377.

\bibitem[HP05]{HP05}
A. Henrot and M. Pierre, \emph{Variation et optimisation de formes: Une analyse géométrique}, vol.~48 of \emph{Mathématiques et Applications}, Springer, Berlin, 2005. doi:10.1007/3-540-37689-5.

\bibitem[HP18]{HP18}
A. Henrot and M. Pierre, \emph{Shape Variation and Optimization}, vol.~28 of \emph{Tracts in Mathematics}, European Mathematical Society, Zürich, 2018. doi:10.4171/178.

\bibitem[Jac12]{Jac12}
D. Jackson, On approximation by trigonometric sums and polynomials, \emph{Trans. Amer. Math. Soc.} \textbf{13} (1912), 491--515. doi:10.2307/1988583.

\bibitem[Joh48]{Joh48}
F. John, Extremum problems with inequalities as subsidiary conditions, in \emph{Fritz John: Collected Papers}, ed. J. Moser, vol. 2, Birkhäuser, Boston, 1985, pp. 543--560. Originally published in 1948.

\bibitem[KT20]{KT20}
Y. Kolomoitsev and S. Tikhonov, Properties of moduli of smoothness in $L_p(\mathbb{R}^d)$, \emph{J. Approx. Theory} \textbf{257} (2020), 105423. doi:10.1016/j.jat.2020.105423.

\bibitem[Lei00]{Lei00}
C. J. Leisner, \emph{Nonlinear wavelet approximation in anisotropic Besov spaces}, Ph.D. dissertation, Purdue University, West Lafayette, 2000.

\bibitem[Mar27]{Mar27}
A. Marchaud, Sur les d\'eriv\'ees et sur les diff\'erences des fonctions de variables r\'eelles, \emph{J. Math. Pures Appl.} (9) \textbf{6} (1927), 337--425. 

\bibitem[Mau95]{Mau95}
J. M. Maubach, Local bisection refinement for $n$-simplicial grids generated by reflection, \emph{SIAM J. Sci. Comput.} \textbf{16} (1995), no.~1, 210--227. doi:10.1137/0916014. 

\bibitem[MS73]{MS73}
M. J. Munteanu and L. L. Schumaker, Direct and inverse theorems for multidimensional spline approximation, \emph{Indiana Univ. Math. J.} \textbf{23} (1973), no.~6, 461--470. doi:10.1512/iumj.1974.23.23040.

\bibitem[OS78]{OS78}
P. Oswald and \`{E}. A. Storo$\check{z}$enko, Jackson’s theorem in the spaces $L_p(\R^k)$, $0 < p < 1$, \emph{Sib. Math. J.} \textbf{19} (1978), no.~4, 630--656.

\bibitem[Ste08]{Ste08}
R. Stevenson, The completion of locally refined simplicial partitions created by bisection, \emph{Math. Comp.} \textbf{77} (2008), no.~261, 227--241. doi:10.1090/S0025-5718-07-01959-X.

\bibitem[Sto77]{Sto77}
\`{E}. A. Storo$\check{z}$enko, On the approximation by algebraic polynomials of functions of class $L_p$, \mbox{$0<p<1$}, \emph{Izv. Math.} \textbf{11} (1977), no.~3, 613--623. doi:10.1070/IM1977v011n03ABEH001738.

\bibitem[Tra97]{Tra97}
C. T. Traxler, An algorithm for adaptive mesh refinement in $n$ dimensions, \emph{Computing} \textbf{59} (1997), no.~2, 115--137. doi:10.1007/bf02684475.


\bibitem[Whi57]{Whi57}
H. Whitney, On functions with bounded $n$th differences, \emph{J. Math. Pures Appl.} (9) \textbf{36} (1957), 67--95.

\end{thebibliography}

\end{document}